\newtheorem {theorem}{Theorem}[section]
\newtheorem {corollary}[theorem]{Corollary}
\newtheorem {lemma}[theorem]{Lemma}
\theoremstyle{definition}
\newtheorem {definition}[theorem]{Definition}
\newtheorem {example}[theorem]{Example}
\newtheorem {remark}[theorem]{Remark}
\newtheorem {axiom}[theorem]{Axiom}
\newtheorem {question}[theorem]{Question}
\numberwithin{equation}{section}        
\def\setrefstep#1{\setcounter{enumi}{#1}}
\newcounter{zahl}%
\newenvironment{punkt}{\begin{list}{{\rm{(\roman{zahl})}}}%
    {\usecounter{zahl}%
     \setlength{\leftmargin}{0pt} \setlength{\itemindent}{4pt} \setlength{\topsep}{2pt} \setlength{\parsep}{2pt} }}%
    {\end{list}}%
\newcommand{\co}{\colon\thinspace}
\newcommand{\mc}[1]{\ensuremath{\mathcal{#1}}}
\newcommand{\toh}[1]{\ensuremath{\stackrel{#1}{\rightarrow}}}%
\newcommand{\colim}{\operatorname*{colim}}
\newcommand{\hocolim}{\operatorname*{hocolim\,}}
\newcommand{\frei}{\,\_\!\_\,}
\newcommand{\id}{\operatorname{id}}
\newcommand{\LKan}{\operatorname*{LKan}}
\newcommand{\inj}{\ensuremath{\hookrightarrow}}
\newcommand{\scup}{\operatorname*{\sqcup}}%
\newcommand{\x}{\operatorname*{\times}}%
\newcommand{\winkel}{\ulcorner\hspace{-3pt}\raisebox{1pt}{\ensuremath{\cdot}}}%
\newcommand{\pushout}{\ensuremath{\displaystyle \winkel}}%
\newcommand{\ul}[1]{\underline{#1}}
\newcommand{\ol}[1]{\overline{#1}}
\newcommand{\Ev}{\ensuremath{{\rm Ev}}}%
\newcommand{\cyl}{\ensuremath{{\rm Cyl}}}%
\newcommand{\Sp}{\ensuremath{{\it Sp}}}%
\newcommand{\symsp}{\ensuremath{\Sp^{\Sigma}}}%
\newcommand{\Hom}[3]{\ensuremath{{\rm Hom}_{#1}(#2,#3)}}%
\newcommand{\dgrm}[1]{\ensuremath{\smash{\underset{\widetilde{\hphantom{#1}}}{#1}} \mathstrut}}%
\newcommand{\Vnat}{\ensuremath{\mc{V}{\rm -nat}}}%
\newcommand{\ulvmod}{\ensuremath{\ul{\mc{V}{\rm -mod}}}}%
\newcommand{\sph}{\ensuremath{{\rm Sph}}}%
\newcommand{\vsph}{\ensuremath{{\rm Sph}^{\mc{V}}}}%
\newcommand{\sigsph}{\ensuremath{{\rm Sph}^{\Sigma}}}%
\newcommand{\stab}{\ensuremath{{\rm stab}}}%
\newcommand{\fib}{\ensuremath{{\rm fib}}}%
\newcommand{\fibfun}{\ensuremath{(\frei)^{\fib}}}%
\newcommand{\cof}{\ensuremath{{\rm cof}}}%
\newcommand{\diag}[2]{ \begin{align} \begin{split} \xymatrix{#1} \end{split} \label{#2} \end{align}}%
\newcommand{\diagr}[1]{ \begin{equation*} \xymatrix{#1} \end{equation*}}%
\begin{document}

\SelectTips{cm}{10}

\title  [$L$-stable functors]
        {$L$-stable functors}
\author{Georg Biedermann}

\address{Department of Mathematics, Middlesex College, The University of Western Ontario, London, Ontario N6A 5B7, Canada}

\email{gbiederm@uwo.ca}

\subjclass{55P42, 55U35, 18D20, 55P91}

\keywords{stable functors, linear functors, spectra, Goodwillie calculus, stable homotopy theory, small functors, symmetric monoidal categories}

\date{\today}

\begin{abstract}
We generalize and greatly simplify the approach of Lydakis and Dundas-R\"ondigs-\O stv\ae r to construct an $L$-stable model structure for small functors from a closed symmetric monoidal model category \mc{V} to a \mc{V}-model category \mc{M}, where $L$ is a small cofibrant object of \mc{V}. For the special case $\mc{V}=\mc{M}=\mc{S}_*$ pointed simplicial sets and $L=S^1$ this is the classical case of linear functors and has been described as the first stage of the Goodwillie tower of a homotopy functor. 
We show, that our various model structures are compatible with a closed symmetric monoidal product on small functors. 
We compare them with other $L$-stabilizations described by Hovey, Jardine and others. This gives a particularly easy construction of the classical and the motivic stable homotopy category with the correct smash product. We establish the monoid axiom under certain conditions.
\end{abstract}

\maketitle
\tableofcontents

\section{Introduction}

Stable homotopy theory is concerned with solving homotopy theoretical problems up to a finite number of suspensions. So we want to work in a category, where the suspension functor is inverted, i.e. is an equivalence of categories. Modern accounts of homotopy theory use model structures and there are several constructions of model categories giving the right stable homotopy category. An early one was \cite{BF:gamma}. However, we also want to get the right symmetric monoidal product -- the smash product -- on the model level. Only much later, see \cite{EKMM} and \cite{HSS:sym}, were model categories found, that also support a symmetric monoidal structure inducing the known smash product on the homotopy level. All of them involve a notion of spectrum. 

Spectra are closely connected to linear functors, i.e. generalized homology theories in the sense of Eilenberg and Steenrod. Every spectrum represents a homology theory, and every homology theory is represented by a spectrum by Brown's representability theorem. It is very useful to observe here, that spectra themselves in all the different variants are simplicially enriched functors on one or another simplicially enriched category with values in simplicial sets. 

Manos Lydakis in \cite{Lydakis} put a model structure on the category of functors from pointed finite simplicial sets to pointed simplicial sets, such that the homotopy types correspond exactly to linear functors i.e. reduced generalized homology theories. Moreover he showed, that his category supports a symmetric monoidal product with a simplicial symmetric monoidal Quillen equivalence to the models for spectra. This was motivated by the Goodwillie tower of a homotopy functor \cite{Goo:calc3}, whose first stage is exactly described by Lydakis' construction. He also showed, that the homotopy category associated to this functor category is symmetrically monoidally equivalent to the classical stable homotopy category.

Until now we have only talked about stabilization with respect to suspension: that is smashing with $S^1$. But stabilization with respect to another object has gained interest. In motivic homotopy theory one considers the category of simplicial (pre-)sheaves over smooth finite dimensional schemes over a base scheme. This category is a merger of simplicial sets and smooth schemes. Then one stabilizes it with respect to $\mathbbm{P}^1$, which is the smash product of the simplicial circle $S^1$ and the Tate circle $\mathbbm{A}^1-\{0\}$. The applications of this idea by Voevodsky and many others are now famous. References include \cite{morel-voev:mot-hom} and \cite{jar:motsymspec}.
So it is legitimate to ask for a general theory of stabilization. 

We will address the following setting: Given a symmetric monoidal model category \mc{V}, let us consider the category of \mc{V}-model categories, whose morphisms are given by \mc{V}-Quillen adjunctions. 
Now take a \mc{V}-model category \mc{M} and a \mc{V}-small cofibrant object $L$. 
Tensoring with $L$ induces a left \mc{V}-Quillen functor on \mc{M}, whose right adjoint will commute with filtered homotopy colimits. 
Under certain technical conditions we can associate to a \mc{V}-category \mc{M} another \mc{V}-category, on which $L$ acts as a Quillen equivalence. This will be called an $L$-stabilization of \mc{M}. The special case $\mc{M}=\mc{V}=\mc{S}_*$, pointed simplicial sets, and $L=S^1$, the simplicial circle, brings us back to the initial situation of stable homotopy theory.

From the spectrum point of view this has been investigated by Hovey in \cite{Hovey:general-sym-spec}. We develop here Lydakis' point of view. Under certain conditions they turn out to be equivalent.

To carry this out we study functor categories  from certain full subcategories \mc{U} of \mc{V} to the given \mc{V}-model category \mc{M}.
We first consider in section \ref{section:projective model structure} the projective model structure on the category of small functors from \mc{U} to \mc{M}. Small functors were introduced to homotopy theory in \cite{Chorny-Dwyer:small} and to Goodwillie calculus in \cite{BCR:calc}. They are exactly the right tool to deal with set-theoretic problems in functor categories with non-small source category. We use work of Day and Lack \cite{Day-Lack:limits}, that supplies the completeness of our functor categories with non-small source.

In section \ref{section:homotopy model} we localize the projective model structure, so that the objectwise fibrant homotopy functors become the fibrant objects. A homotopy functor is a functor, that preserves weak equivalences. In this localization step we use the assumption, that every object in \mc{U} is cofibrant. 
If \mc{U} is small, this condition can probably be relaxed. But since we can make good use of it in the later development, we just keep it.
It is a technical weakness, that has to be dealt with in future work. 

Finally in section \ref{section:L-stable functors} we localize further, so that the objectwise fibrant $L$-stable homotopy functors become the fibrant objects.
What are $L$-stable functors? By a general fact small functors are \mc{V}-functors, so they come equipped with a natural map
    $$ \dgrm{X}(\frei)\otimes L\to\dgrm{X}(\frei\otimes L).$$
Let us call the left \mc{V}-Quillen functor obtained by tensoring with $L$ simply $L$ and its right adjoint $R$.
A homotopy functor $\dgrm{X}\co\mc{V}\to\mc{M}$ is $L$-stable if the adjoint map
    $$ \dgrm{X}\to R\circ\dgrm{X}\circ L=\mc{V}(L,\dgrm{X}(\frei\otimes L)) $$
is an objectwise weak equivalence. Observe, that by \cite[Thm 1.8]{Goo:calc3} $S^1$-stable functors are exactly those with a Mayer-Vietoris sequence. In the $L$-stable structure every small functor is weakly equivalent to an $L$-stable one. For $L=S^1$ this is Lydakis' approach to the stable homotopy category. 

Lydakis has shown, how to provide a tensor product on the functor category, that induces the right symmetric monoidal structure on homotopy level. 
This is actually a special case of a more general procedure devised by Day in \cite{day:closed}.
We prove compatibility results with our various model structures on the functor categories in section \ref{section:sym. mon. str.}. 
We obtain in \ref{V^V symmetric monoidal}, that the category $\mc{V}^{\mc{U}}$ is a closed symmetric monoidal model category. The category $\mc{M}^{\mc{U}}$ is a $\mc{V}^{\mc{U}}$-category. 

For small source category \mc{U} we can study functoriality with respect to \mc{M}. We prove, that there is a 2-functor from \mc{V}-model categories with certain properties to itself with values in $L$-stable categories, i.e. those on which $L$ acts as a Quillen equivalence. 
To study functoriality of our $L$-stabilization with respect to the object $L$ is less straightforward as in \cite{Hovey:general-sym-spec}. 
With an auxiliary $L$-$L'$-stable model structure we show in \ref{L-functoriality2}, that the Quillen equivalence type of $\mc{M}^{\mc{U}}$ with the $L$-stable model structure depends only on the weak homotopy type of $L$. 

Finally we compare our $L$-stabilizations with symmetric $L$-spectra and Bousfield-Friedlander $L$-spectra constructed by Hovey in \cite{Hovey:general-sym-spec}. 
Using a certain subcategory $\vsph_L=\mc{U}$ resembling the spheres as a source category, we can prove, that there is a zig-zag of \mc{V}-Quillen equivalences from our $L$-stable model structure on $\mc{M}^{\vsph_L}$ to Hovey's symmetric $L$-spectra on \mc{M}. This is only suboptimal, since there is a canonical candidate for a direct Quillen equivalence. By comparing $L$-stable func\-tors to Bousfield-Friedlander $L$-spectra we can get sufficient conditions, when all three models are Quillen equivalent. This is discussed in sections \ref{section:L-spectra} and \ref{section:examples}.

This is not the first time a generalization of Lydakis' work was attempted.
For certain small \mc{U} and $\mc{M}=\mc{V}$ and general $L$ Dundas, R\"ondigs and \O stv\ae r in \cite{DRO:enriched} described $L$-stabilizations. 
Following them we can also prove the monoid axiom in certain settings \ref{monoid axiom}.
This article generalizes further and greatly simplifies their work on $L$-stable functors.
\\

\noindent
{\bf Acknowledgments:} This work is a side track of my joint project with Boris Chorny and Oliver R\"ondigs to define Goodwillie calculus of homotopy functors for general model categories. I am especially grateful to both of them to point out to me the right generators for the acyclic cofibrations in the various model categories. I would also like to thank Gerald Gaudens and Bj\o rn Dundas for helpful conversations. Finally I would like to thank Steve Lack for keeping me updated on his joint work on \cite{Day-Lack:limits}.

This article was written partly at the University of Western Ontario and partly during my guest stay at the Fields Institute, Toronto.

\section{Closed symmetric monoidal model categories}
\label{section:monoidal model}

We will first describe compatibility conditions between a model structure and a closed symmetric monoidal structure. The basic references for enriched category theory are \cite{Bor:2} and \cite{Kelly}. A good reference on enriched model categories is \cite{Hov:model}.

Let \mc{V} be a closed symmetric monoidal category. We will denote the monoidal functor by $\otimes$ and the unit by $S$. A \mc{V}-category \mc{M} is a category, which is enriched, tensored and cotensored over \mc{V}, where these structures satisfy some adjunction relations. The enrichment given by the \mc{V}-object of maps in \mc{M} will be denoted by $\mc{M}(A,B)$ for objects $A,B$ in \mc{M}. The cotensor will be denoted by $X^A$ for $X$ in \mc{M} and $A$ in \mc{V}. Of course, for $\mc{M}=\mc{V}$ we have $\mc{V}(A,X)=X^A$.

\begin{definition}
Assume, that \mc{V} and \mc{M} are both complete and cocomplete. 
Let $i\co A\to B$ be a map in \mc{V} and $j\co C\to D$ be map in \mc{M}. The {\it pushout product} is the map
    $$ i\,\square\, j\co(A\otimes D)\sqcup_{(A\otimes C)}(B\otimes C)\to B\otimes D $$
in \mc{M} obtained from the universal property of the pushout. There is an adjoint construction 
    $$p\boxtimes i\co X^B\to Y^B\times_{Y^A}X^A $$
for a map $p\co X\to Y$ in \mc{M}.
\end{definition}

\begin{definition}
Let \mc{V} be a closed symmetric monoidal category equipped with a model structure. Then \mc{V} is a {\it closed symmetric monoidal model category}, if the two structures are compatible in the following sense: 
\begin{punkt} 
   \item For two cofibrations $i$ and $j$ in \mc{V} their pushout product $i\,\square\, j$ is a cofibration, which is acyclic if either $i$ or $j$ is acyclic.
   \item For a cofibrant replacement $QS\to S$ of the unit $S$ the induced map $QS\otimes X\to S\otimes X\cong X$ is a weak equivalence for every object $X$ in \mc{V}.
\end{punkt} 
\end{definition}

The dual formulation of (i) is that for a fibration $p$ the map $p\boxtimes i$ is a fibration, which is acyclic if either $i$ or $p$ is acyclic. 
Later on we will assume, that all objects in \mc{V} are cofibrant. In particular the unit $S$ will be cofibrant turning condition (ii) in the previous definition redundant.  Similar remarks apply for the next definition.

\begin{definition} \label{V-model category}
Let \mc{V} be a closed symmetric monoidal model category. Let \mc{M} \mc{V}-category equipped with a model structure. \mc{M} is a {\it \mc{V}-model category}, if the two structures are compatible in the following sense: 
\begin{punkt} 
   \item For a cofibrations $i$ in \mc{V} and a cofibration $j$ in \mc{M} their pushout product $i\,\square\, j$ is a cofibration, which is acyclic if either $i$ or $j$ is acyclic.
   \item For a cofibrant replacement $QS\to S$ of the unit $S$ the induced map $QS\otimes X\to S\otimes X\cong X$ is a weak equivalence for every object $X$ in \mc{M}.
\end{punkt} 
\end{definition}

For the notion of cofibrantly generated model category see {\it references}.
\begin{lemma} \label{smash compatible and generators}
Let $I$ and $J$ be sets of generating $($acyclic$)$ cofibrations for \mc{V}. Suppose $I\,\square\, I\subset {\rm cof}$ and $I\,\square\, J\subset {\rm acof}$ and that the unit $S$ is cofibrant. Then \mc{V} is a closed symmetric monoidal model category.
\end{lemma}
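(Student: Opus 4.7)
The plan is to deduce the full pushout-product axiom from the hypotheses on the generators via a standard closure argument, invoking the cofibrant generation of $\mc{V}$. First, since $S$ is cofibrant by hypothesis, one may take $QS=S$ and condition (ii) of Definition \ref{V-model category} (applied to the case $\mc{M}=\mc{V}$) becomes trivial. The entire content of the lemma is therefore to verify condition (i), namely that for arbitrary cofibrations $i,j$ in $\mc{V}$ the pushout product $i\,\square\, j$ is a cofibration, acyclic as soon as one of $i,j$ is.

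The key observation to reduce to the given generating hypotheses is that the pushout product is a left adjoint in each variable separately. Concretely, because $\otimes$ preserves colimits in each slot (as a left adjoint) and the pushout-product construction involves only a pushout and the tensor, the functors $i\,\square\,(\frei)$ and $(\frei)\,\square\,j$ on the arrow category commute with all colimits. In particular they preserve pushouts, coproducts, transfinite compositions, and retracts, which are precisely the operations under which cofibrations and acyclic cofibrations in a cofibrantly generated model category are closed.

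Now I run the standard two-step saturation. Fix $i\in I$ and consider the class $\mc{C}_i=\{\,j : i\,\square\, j\text{ is a cofibration}\,\}$. By hypothesis $I\subseteq\mc{C}_i$; by the preservation properties above combined with the closure of cofibrations under retracts, pushouts, coproducts, and transfinite compositions, $\mc{C}_i$ is saturated. Since every cofibration in $\mc{V}$ is a retract of a relative $I$-cell complex, $\mc{C}_i$ contains all cofibrations. Now fix an arbitrary cofibration $j$; the class $\{\,i : i\,\square\, j\text{ is a cofibration}\,\}$ contains $I$ by the previous step and is saturated by the same argument, hence contains all cofibrations. This establishes $\mathrm{cof}\,\square\,\mathrm{cof}\subseteq\mathrm{cof}$. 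The acyclic half is identical: the symmetry isomorphism $i\,\square\, j\cong j\,\square\, i$ coming from the symmetric monoidal structure lets us interchange factors freely, so $I\,\square\, J\subseteq{\rm acof}$ also gives $J\,\square\, I\subseteq{\rm acof}$; then the same two-step saturation, now using that acyclic cofibrations are retracts of relative $J$-cell complexes and are closed under the same operations, upgrades this to $\mathrm{cof}\,\square\,{\rm acof}\subseteq{\rm acof}$ and ${\rm acof}\,\square\,\mathrm{cof}\subseteq{\rm acof}$.

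There is no genuine obstacle here; the only point to check carefully is the colimit preservation of the pushout-product construction, but this is formal from the closedness of the monoidal structure. The argument is essentially the Joyal--Tierney style calculus of pushout products applied to a cofibrantly generated closed symmetric monoidal structure.
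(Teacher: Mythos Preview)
Your argument is correct. The paper does not actually give a proof; it only remarks that the lemma is ``a straightforward exercise in adjunctions and lifting properties.'' Your saturation argument---using that $\square$ commutes with colimits in each variable and that (acyclic) cofibrations are retracts of relative $I$-cell (resp.\ $J$-cell) complexes---is one of the two standard routes.

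The phrase in the paper points to the other standard route, which proceeds dually via the adjunction between $\square$ and $\boxtimes$. One observes that $i\,\square\,j$ has the left lifting property against $p$ if and only if $j$ has the left lifting property against $p\boxtimes i$. Thus $I\,\square\,I\subset{\rm cof}$ is equivalent to $p\boxtimes i\in I\text{-inj}={\rm afib}$ for every acyclic fibration $p$ and every $i\in I$; this in turn is equivalent to ${\rm cof}\,\square\,I\subset{\rm cof}$, and one iterates. The acyclic case is handled the same way using $J$ in place of $I$ on one side. This approach trades the explicit colimit-preservation check for repeated use of the lifting characterization of (acyclic) fibrations; neither argument is materially shorter or more general than the other, and both are entirely standard.
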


The proof is a straightforward exercise in adjunctions and lifting properties. The precise form of the next definition can be read in \cite[Def. 6.2.3.]{Bor:2} or \cite[p. 9]{Kelly}.
\begin{definition}
Let \mc{M} and \mc{N} be two \mc{V}-categories.
A {\it \mc{V}-functor} $\dgrm{X}\co\mc{M}\to\mc{N}$ is a function ${\rm Ob}(\dgrm{X})\co{\rm Ob}(\mc{M})\to{\rm Ob}(\mc{N})$ together with maps 
    $$ \mc{M}(A,B)\to\mc{N}(\dgrm{X}(A),\dgrm{X}(B)) $$
for all objects $A,B$ in \mc{M}, such that certain diagrams commute. They assert, that \dgrm{X} behaves well with respect to composition and the identity.
\end{definition}

For a \mc{V}-functor $\dgrm{X}\co\mc{V}\to\mc{M}$ and objects $K$ and $L$ in \mc{V} we have canonical maps
\begin{align}\begin{split}
    S\to\mc{V}(K\otimes L,K\otimes L)\cong\mc{V}(K,K\otimes L)^L &\to\mc{V}(\dgrm{X}(K),\dgrm{X}(K\otimes L))^L \\
               &\cong\mc{V}(\dgrm{X}(K)\otimes L,\dgrm{X}(K\otimes L)). 
\end{split}\end{align}
The resulting map 
\begin{equation}\label{assembly1}
    \dgrm{X}(\frei)\otimes L\to\dgrm{X}(\frei\otimes L)
\end{equation}
will be called {\it assembly map}.
The fact, that \dgrm{X} is a \mc{V}-functor can also be expressed in terms of these maps. We will need these maps to define $L$-stability of functors. 

\begin{remark}\label{V-mod}
There is a notion of {\it \mc{V}-natural transformation}, we refer the reader to \cite[p.9]{Kelly}.
For the definition of {\it \mc{V}-Quillen adjunction} we refer to \cite[Def. 4.2.18.]{Hov:model}. It is explained there, that \mc{V}-model categories together with \mc{V}-Quillen adjunction and \mc{V}-natural transformations form a $2$-category, which we will denote by \mc{V}-mod.
We will usually refer to a morphism in this category by its left adjoint. Later on \ref{ulvmod} we will define a full subcategory $\ulvmod$ of \mc{V}-mod, that will be of primary interest.
\end{remark}

We want to generalize the fact from simplicial model categories, that weak equivalences between fibrant and cofibrant objects can be detected with the use of a simplicial interval $\Delta^1$. We will first introduce left \mc{V}-homotopy and then \mc{V}-homotopy equivalence, which actually should be called left \mc{V}-homotopy equivalence.

\begin{definition}\label{cyl(S)}
For the cofibrant unit $S$ of \mc{V} let $\cyl(S)$ denote an object obtained by factoring the fold map $S\sqcup S\to S$ into a cofibration $i\co S\sqcup S\to\cyl(S)$ followed by a weak equivalence $p\co\cyl(S)\to S$. 
We have two inclusions $S\to S\sqcup S$, which we can compose with the map $i$ to obtain two cofibrations $i_0$ and $i_1\co S\to\cyl(S)$. These are section of the map $p$ and hence weak equivalences.
We have the following diagram:
\diagr{ S\sqcup S \ar[r] & CS \ar[r]^-p & S \ar@/^10pt/[l]^-{i_0}\ar@/_12pt/[l]_-{i_1}  }  
We emphasize, that for the moment there is a whole class of choices for $\cyl(S)$.
\end{definition}

\begin{lemma} \label{cylinder}
Let \mc{V} be a closed symmetric monoidal model category with cofibrant unit $S$ and let \mc{M} be a \mc{V}-model category. Let $CS$ be a cylinder object over $S$ and let $X$ be a cofibrant object in \mc{M}. Then $CS\otimes X$ becomes a cylinder object for $X$.
\end{lemma}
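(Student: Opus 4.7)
The plan is to verify the two defining properties of a cylinder object for $X$: we must factor the fold map $X\sqcup X\to X$ as a cofibration followed by a weak equivalence. I will produce this factorization by tensoring the given factorization $S\sqcup S\toh{i} CS\toh{p} S$ of the fold on $S$ with $X$, and then check that the two halves of the resulting factorization $(S\sqcup S)\otimes X\toh{i\otimes X}CS\otimes X\toh{p\otimes X}S\otimes X\cong X$ retain the required properties. First I will note that since $-\otimes X$ preserves colimits (being a left adjoint to the cotensor $(\frei)^X$), there is a natural identification $(S\sqcup S)\otimes X\cong (S\otimes X)\sqcup(S\otimes X)\cong X\sqcup X$, and naturality together with the definition of the fold makes the composite $(p\otimes X)\circ(i\otimes X)$ the fold map on $X\sqcup X$.

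Next I would handle the cofibration statement. The map $i\otimes X$ can be recognized as the pushout product $i\,\square\, j$, where $j\co\emptyset\to X$ is the initial map (a cofibration because $X$ is cofibrant): since tensoring with the initial object $\emptyset$ yields $\emptyset$, the pushout in the definition of $i\,\square\, j$ collapses to $(S\sqcup S)\otimes X$, and the pushout product map is exactly $i\otimes X$. Then axiom (i) of Definition \ref{V-model category} gives that $i\otimes X$ is a cofibration.

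The weak equivalence statement for $p\otimes X$ is the one requiring some care, since $p$ itself is only a weak equivalence, not necessarily a cofibration, so the pushout product axiom does not apply directly. The idea is to invoke Ken Brown's lemma: the functor $-\otimes X\co\mc{V}\to\mc{M}$ is left Quillen whenever $X$ is cofibrant, because $(\frei)\otimes X = (\frei)\,\square\,(\emptyset\to X)$ sends (acyclic) cofibrations in $\mc{V}$ to (acyclic) cofibrations in $\mc{M}$ by axiom (i). A left Quillen functor preserves weak equivalences between cofibrant objects. Here both $S$ and $CS$ are cofibrant ($S$ by hypothesis, and $CS$ because $S\sqcup S$ is cofibrant and $i$ is a cofibration with codomain $CS$), so $p\otimes X$ is a weak equivalence.

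The main obstacle in the argument is precisely this last step: upgrading the weak equivalence $p$ to a weak equivalence after tensoring with $X$, without $p$ being a cofibration. Everything else (the identification of the coproduct and the cofibrancy of $i\otimes X$) is a direct application of the pushout product axiom combined with the fact that the tensor is a left adjoint in each variable.
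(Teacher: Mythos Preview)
Your proof is correct. The only difference from the paper's argument is in how you establish that $p\otimes X$ is a weak equivalence. You appeal to Ken Brown's lemma applied to the left Quillen functor $(\frei)\otimes X$, which is perfectly valid. The paper instead argues more directly: since $i_0\co S\to CS$ is an acyclic cofibration (it is a cofibration as a component of the cofibration $S\sqcup S\to CS$ and a weak equivalence as a section of $p$), tensoring with the cofibrant $X$ yields an acyclic cofibration $i_0\otimes\id_X\co X\to CS\otimes X$, which is a section of $p\otimes\id_X$; the weak equivalence claim then follows from 2-out-of-3. The paper's route is a hair more elementary in that it avoids invoking Ken Brown, while your route makes explicit the general principle (left Quillen functors preserve weak equivalences between cofibrant objects) that underlies such arguments.
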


\begin{proof}
Since $X$ is cofibrant, tensoring with $X$ preserves cofibrations and acyclic cofibrations. So, if we tensor the diagram with $X$, we obtain a cofibration $X\sqcup X\cong (S\sqcup S)\otimes X\to CS\otimes X$ and a map $CS\otimes X\to S\otimes X\cong X$, that has the acyclic cofibrations $i_0\otimes\id_X$ and $i_1\otimes\id_X\co X\cong S\otimes X\to CS\otimes X$ as sections. By 2-out-of-3 $CS\otimes X\to X$ is a weak equivalence. Hence $CS\otimes X$ is a cylinder object for $X$.
\end{proof}

\begin{definition}
Two maps $f$ and $g\co X\to Y$ in \mc{M} are {\it left \mc{V}-homotopic} if there exists a map $H\co CS\otimes  X\to Y$, called a {\it left \mc{V}-homotopy}, such that the following diagram commutes
\diagr{ X \ar[d]\ar[dr]^f & \\
        CS\otimes X \ar[r]^H & Y \\
        X \ar[u]\ar[ur]_g &  } 
where $CS$ is some cylinder object for $S$.
\end{definition}

For example if \mc{S} is the category of simplicial sets, then for cofibrant $X$ \mc{S}-homotopy is just simplicial homotopy.

\begin{lemma} 
If $X$ is cofibrant, then the \mc{V}-homotopy relation on $\mc{M}(X,Y)$ is an equivalence relation.
\end{lemma}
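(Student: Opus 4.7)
The plan is to reduce the claim to the classical Quillen argument that left homotopy is an equivalence relation on $\mc{M}(X,Y)$ when $X$ is cofibrant. Lemma \ref{cylinder} already provides the crucial input: for any cylinder object $CS$ for the unit $S$, the object $CS\otimes X$ is a cylinder object for $X$ in the ordinary model-categorical sense. Thus a $\mc{V}$-homotopy is just a left homotopy through a cylinder of the particular form $CS\otimes X$, and it suffices to check that these three constructions (identity homotopy, reversed homotopy, concatenated homotopy) can be carried out inside the class of cylinders of this form.

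For reflexivity, I would pick any cylinder $CS$ for $S$ (which exists by Definition \ref{cyl(S)}) and take the constant homotopy $H=f\circ(p\otimes\id_X)\co CS\otimes X\to S\otimes X\cong X\to Y$, where $p\co CS\to S$ is the structural weak equivalence; pre-composing with either $i_0\otimes\id_X$ or $i_1\otimes\id_X$ recovers $f$ because $p\circ i_0=p\circ i_1=\id_S$. For symmetry, I would simply observe that swapping the roles of $i_0$ and $i_1$ in Definition \ref{cyl(S)} produces another valid cylinder object on the same underlying object $CS$; hence a $\mc{V}$-homotopy $H$ witnessing $f\sim g$ via $(CS,i_0,i_1)$ equally witnesses $g\sim f$ via $(CS,i_1,i_0)$.

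The main work is transitivity. Given $H\co CS\otimes X\to Y$ from $f$ to $g$ and $H'\co CS'\otimes X\to Y$ from $g$ to $h$, I would form the pushout $CS''=CS\sqcup_S CS'$ in \mc{V} along $i_1^{CS}\co S\to CS$ and $i_0^{CS'}\co S\to CS'$, equipped with the induced inclusions $i_0^{CS}\co S\to CS''$ and $i_1^{CS'}\co S\to CS''$ and the map $CS''\to S$ induced by the two structural weak equivalences. To see this is again a cylinder for $S$, I would check that $S\sqcup S\to CS''$ is a cofibration (using that both original inclusions $S\sqcup S\to CS$ and $S\sqcup S\to CS'$ are cofibrations, together with the fact that $i_0^{CS}$ and $i_1^{CS'}$ are acyclic cofibrations since $S$ is cofibrant and $p,p'$ are weak equivalences) and that the composite to $S$ is a weak equivalence (by two-out-of-three applied to the pushout). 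Tensoring with the cofibrant object $X$ preserves this pushout and the cofibration/weak equivalence structure, so $CS''\otimes X\cong (CS\otimes X)\sqcup_X(CS'\otimes X)$ is a cylinder for $X$ by Lemma \ref{cylinder}, and the homotopies $H$ and $H'$ glue along $g$ to a homotopy $CS''\otimes X\to Y$ witnessing $f\sim h$.

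The step most likely to demand care is verifying that $S\sqcup S\to CS''$ is indeed a cofibration; this is where the hypothesis that $S$ is cofibrant (and hence the ends $i_0,i_1$ are acyclic cofibrations) enters decisively, paralleling the familiar argument in \cite[Ch.~1]{Hov:model}. Everything else is a direct translation of the classical model-category proof, with the \mc{V}-enriched flavour entering only through the tensor $CS\otimes X$ supplied by Lemma \ref{cylinder}.
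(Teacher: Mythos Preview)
Your proposal is correct and follows essentially the same approach as the paper: reflexivity via the trivial cylinder, symmetry by switching the two inclusions, and transitivity by forming the pushout $CS''=CS\sqcup_S CS'$ and checking it is again a cylinder for $S$, then invoking Lemma~\ref{cylinder} to pass from cylinders on $S$ to cylinders on $X$ via tensoring. The paper organizes this as first handling the case of $S$ and then tensoring with $X$, whereas you work directly with $CS\otimes X$ throughout, but the underlying constructions are identical and your version is in fact more carefully spelled out (particularly the verification that $S\sqcup S\to CS''$ is a cofibration).
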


\begin{proof}
We first prove the special case $X=\emptyset$.
The \mc{V}-homotopy relation is reflexive, since $S$ is a cylinder object for $S$. It is symmetric by switching the inclusions. It is transitive, because the following pushout diagram again forms a cylinder object for $S$:
\diagr{ & & S \ar[d] \\
        & S \ar[r]\ar[d]\ar@{}[dr]|->>>{\pushout} & CS \ar[d] \\
        S \ar[r] & CS' \ar[r] & CS''  }
The general result now follows by tensoring with $X$. 
\end{proof}

\begin{lemma} \label{left=V} 
If $X$ is cofibrant, then the \mc{V}-homotopy relation on $\mc{M}(X,Y)$ coincides with the left homotopy relation.
\end{lemma}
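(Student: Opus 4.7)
The plan is to establish the two inclusions between the \mc{V}-homotopy relation and the left homotopy relation on $\mc{M}(X,Y)$. One inclusion, namely that every \mc{V}-homotopy is a left homotopy, is immediate from Lemma \ref{cylinder}: that lemma shows that $CS \otimes X$ is a cylinder object for the cofibrant $X$, so any \mc{V}-homotopy $H \co CS \otimes X \to Y$ is by definition a left homotopy via this particular cylinder.

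For the converse inclusion, suppose $f$ and $g$ are left homotopic via some cylinder $X'$ for $X$ and some homotopy $H \co X' \to Y$; the task is to produce a \mc{V}-homotopy $H' \co CS \otimes X \to Y$. The key structural input is that, when $X$ is cofibrant, each structure inclusion $i_k \co X \to X'$ into any cylinder object $X'$ is an acyclic cofibration: it is a cofibration as the composite of the cofibration $X \to X \sqcup X$ (a pushout of $\emptyset \to X$, hence a cofibration since $X$ is cofibrant) with the cofibration $X \sqcup X \to X'$, and it is a weak equivalence by 2-out-of-3 applied to its being a section of the weak equivalence $X' \to X$. Using this, I form a mediating cylinder $W$ by gluing $X'$ and $CS \otimes X$ along a common $X$-endpoint, mimicking the pushout construction used in the previous lemma. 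The homotopy $H$ extends to a left homotopy on $W$ by a constant homotopy on the $CS \otimes X$ factor; a symmetric pushout argument transfers the resulting homotopy back to one realized on $CS \otimes X$ alone. This is essentially the cylinder-independence of the left homotopy relation for cofibrant sources, as contained in \cite[Prop. 1.2.5]{Hov:model}.

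The main obstacle is this final transfer step, which has to be carried out without assuming fibrancy of $Y$. The cofibrancy of $X$ is essential throughout, since it promotes every cylinder structure inclusion to an acyclic cofibration and thereby ensures that pushouts of cylinder objects remain cylinder objects, allowing the transfer arguments to close up. The remaining steps are formal applications of the \mc{V}-model structure axioms together with the pushout techniques already exhibited in the preceding lemma.
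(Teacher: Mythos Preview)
Your approach matches the paper's: both directions reduce to Lemma~\ref{cylinder}, with the converse implicitly deferred to the standard cylinder-independence of left homotopy for cofibrant sources. The paper's one-line proof just leaves this standard fact unspoken, while you attempt to sketch it.

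That sketch has a gap, though. Gluing $X'$ and $CS\otimes X$ to form $W$ and extending $H$ by a constant does give a left homotopy via the larger cylinder $W$, but no ``symmetric pushout argument'' collapses this back to a homotopy realized on $CS\otimes X$ alone; iterated pushouts only enlarge the cylinder further. The actual mechanism behind \cite[Prop.~1.2.5]{Hov:model} is different: one first upgrades to a left homotopy $K\co \tilde C\to Y$ via a \emph{good} cylinder $\tilde C$ (one with $\tilde C\to X$ a trivial fibration), and then solves the lifting problem
\diagr{ X\sqcup X \ar[r]\ar[d] & \tilde{C} \ar@{->>}[d]^{\simeq} \\ CS\otimes X \ar[r]\ar@{.>}[ur]^{\phi} & X }
to obtain a cylinder map $\phi$, after which $K\circ\phi$ is the desired \mc{V}-homotopy. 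Your citation is correct and your overall plan is sound, but the pushout description misrepresents how the transfer is actually achieved.
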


\begin{proof}
This follows directly from lemma \ref{cylinder}.
\end{proof}

\begin{remark}\label{Whitehead}
The previous lemma \ref{left=V} implies the \mc{V}-enriched Whitehead lemma: a map between fibrant and cofibrant objects is a weak equivalence if and only if it is \mc{V}-homotopy equivalence.
Observe also, that \mc{V}-enriched functors preserve \mc{V}-homotopy and in particular \mc{V}-homotopy equivalence. 
\end{remark}

For later use we describe here the \mc{V}-mapping cylinder as an analog of the simplicial mapping cylinder.
\begin{definition}\label{cyl(f)}
Let $f\co A\to B$ be a map in a \mc{V}-model category \mc{M} between cofibrant objects. Then the {\it \mc{V}-mapping cylinder} of $f$ is defined by the following pushout square:
\diagr{ A \ar[r]^-{i_0}\ar[d]^f\ar@{}[dr]|->>>{\pushout} & A\otimes \cyl(S) \ar[d] \\ B \ar[r] & \cyl_{\mc{V}}(f) }
Here $\cyl(S)$ is a fixed cylinder object of the cofibrant unit $S$ of \mc{V} and $i_0\co A\to A\otimes CS$ is induced by the inclusion $i_0$ from \ref{cyl(S)}. There are maps $i_1(f)\co A\to\cyl_{\mc{V}}(f)$ induced by the inclusion $i_1$ from \ref{cyl(S)} and $p(f)\co\cyl(f)\to B$. Here $i_1(f)$ is a cofibration and and $p(f)$ is a weak equivalence. We have a commutative diagram:
\diagr{ A \ar[rr]^-f\ar[dr]_{i_1(f)} & & B  \\  & \cyl_{\mc{V}}(f) \ar[ur]_{p(f)}^{\simeq} }
We will usually drop the reference to \mc{V} and simply denote the cylinder by $\cyl(f)$.
\end{definition}



\section{Localization}

This short section is a quick crash course in the localization technique we will be using.
\begin{definition}\label{endofunctor}
Given an endofunctor $F\colon \mc{C}\to \mc{C}$ in a model category \mc{C} equipped with a coaugmentation $\epsilon\co\id\to F$ we call a map $X\to Y$ in \mc{C} an $F$-\emph{equivalence}, if it induces a weak equivalence $FX\to FY$. A map $X\to Y$ is called an {\it $F$-fibration}, if it has the right lifting property with respect to all projective cofibrations, which are also $F$-equivalences. 
\end{definition}

The following theorem is proved in \cite[A.7]{BF:gamma} and \cite[9.3]{Bou:telescopic}.
\begin{theorem}[Bousfield-Friedlander]\label{thm:bousfield-machine}
Suppose $\epsilon \colon \id\to F$ is a coaugmented endofunctor of a right proper model category $\mc{C}$ satisfying the following axioms:
\begin{description}
  \item[(A.4)] The functor $F$ preserves weak equivalences.
  \item[(A.5)] The maps $\epsilon_{F(A)}, F\epsilon_A \colon F(A) \rightrightarrows FF(A)$ are weak equivalences
    for any object $A\in \mc{C}$.
  \item[(A.6)] Consider a pullback diagram 
    \diagr{ {W} \ar[r] \ar[d] & {Y} \ar[d]_p \\
            {X} \ar[r]^{f} & {Z}}
where $p$ is an $F$-fibration and $f$ is an $F$-equivalence. Then ${W}\to Y$ is an $F$-equivalence.
\end{description}
Then the classes of cofibrations, $F$-equivalences and $F$-fibrations form a right proper model structure, which is simplicial or left proper, if the original model structure on $\mc{C}$ is simplicial or left proper. It possesses functorial factorization, if \mc{C} does. 
\end{theorem}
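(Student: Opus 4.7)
The plan is to verify each of the model category axioms for the triple $(\text{cofibrations}, F\text{-equivalences}, F\text{-fibrations})$ on $\mc{C}$. The formal axioms come cheaply: limits and colimits are inherited from $\mc{C}$; the class of $F$-equivalences satisfies two-out-of-three and is closed under retracts because weak equivalences in $\mc{C}$ are and $F$ preserves them by (A.4); cofibrations are unchanged, and $F$-fibrations are retract-closed by virtue of being defined by a right lifting property. One half of the lifting axiom, that cofibrations which are $F$-equivalences lift against $F$-fibrations, holds by definition.

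The second lifting axiom reduces to the key sublemma that a map is simultaneously an $F$-fibration and an $F$-equivalence if and only if it is a trivial fibration in $\mc{C}$. The easy direction follows because (A.4) makes any weak equivalence an $F$-equivalence, and trivial fibrations have the right lifting property against all cofibrations, hence against those that are $F$-equivalences. The hard direction uses (A.5) together with right properness: for such a $p$, one builds a diagonal filler in a square constructed from a cofibrant replacement and the coaugmentation $\epsilon$, and exhibits $p$ as a retract of a trivial fibration. With this sublemma in hand, one factorization axiom — cofibration followed by $F$-trivial fibration — is inherited directly from $\mc{C}$.

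For the remaining factorization, as a cofibration that is an $F$-equivalence followed by an $F$-fibration, I would use the Bousfield--Friedlander pullback construction. Given $f \co X \to Y$, factor $Ff$ in $\mc{C}$ as $FX \to V \to FY$, form the pullback $P = Y \times_{FY} V$ along $\epsilon_Y$, and then factor the induced map $X \to P$ in $\mc{C}$ as a cofibration $X \to \widetilde{P}$ followed by a trivial fibration $\widetilde{P} \to P$. The composite $\widetilde{P} \to P \to Y$ is the $F$-fibration, and $X \to \widetilde{P}$ is the $F$-equivalence cofibration. Axiom (A.6) controls the pullback, using that $\epsilon_Y$ is an $F$-equivalence by (A.5); applying (A.5) to $\epsilon_X$ together with the sublemma lets us identify the $F$-homotopical behaviour of the composite with what is required.

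The main obstacle is precisely this verification that the pullback construction yields an $F$-equivalence — the interplay of (A.5), (A.6), and right properness must be orchestrated carefully, and the sublemma on $F$-trivial fibrations is used both to recognise maps and to establish the correct lifting properties. Once the structure is in place, right properness is exactly the statement of (A.6); left properness and simplicial compatibility transfer from $\mc{C}$ because the new factorizations are built from the original ones by pullback, and functorial factorization is inherited for the same reason.
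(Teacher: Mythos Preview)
The paper does not supply its own proof of this theorem; it simply cites \cite[A.7]{BF:gamma} and \cite[9.3]{Bou:telescopic}. Your proposal is a sketch of the standard argument found in those references, so in that sense it goes well beyond what the paper does.

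As a sketch of the Bousfield--Friedlander proof your outline is broadly on target, but two steps are underspecified in a way that hides real work. First, in the factorization $X\to\widetilde P\to Y$ you invoke (A.6) on the pullback $P=Y\times_{FY}V$ along $\epsilon_Y$; but (A.6) as stated requires the right-hand vertical map to be an $F$-fibration, whereas $V\to FY$ is only produced as a fibration in $\mc{C}$. One must first argue that a fibration between objects weakly equivalent to things in the image of $F$ is automatically an $F$-fibration (this is where (A.5) and the sublemma feed back in), and only then does (A.6) apply. Second, the ``hard direction'' of your sublemma---that an $F$-fibration which is an $F$-equivalence is an acyclic fibration in $\mc{C}$---is the technical core of the whole proof, and ``exhibit $p$ as a retract of a trivial fibration'' glosses over the actual construction, which uses the naturality square for $\epsilon$, a factorization of $Fp$, right properness to control the comparison with the pullback, and (A.5) to identify the relevant maps as weak equivalences. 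None of this is wrong, but a reader of your proposal would not be able to reconstruct the argument from what you wrote; the cited references do the genuine work.
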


There is the following characterization of $F$-fibrations.
\begin{lemma} \label{F-fibrant}
A  map $p\co X\to Y$ in \mc{C} is an $F$-fibration if and only if it is a fibration, such that the diagram
\diagr{ X \ar[r]\ar[d]_p & FX \ar[d]^{Fp} \\ Y \ar[r] & FY} 
is a homotopy pullback square in the underlying model structure.
\end{lemma}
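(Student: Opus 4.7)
My plan is to analyze both implications by reducing the square to a strict model of the homotopy pullback. Factor $Fp\co FX\to FY$ as an acyclic cofibration $j\co FX\to Z$ followed by a fibration $q\co Z\to FY$, and set $V=Y\times_{FY}Z$. Then $V\to Y$ is a fibration and $V$ serves as a homotopy pullback of $Y\xrightarrow{\epsilon_Y}FY\xleftarrow{Fp}FX$; writing $\phi\co X\to V$ for the canonical comparison, the square in the lemma is a homotopy pullback in $\mc{C}$ if and only if $\phi$ is a weak equivalence.

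The first step is to show that $\phi$ is always an $F$-equivalence, independently of whether $p$ is an $F$-fibration. I claim $\epsilon_Z$ is a weak equivalence: by (A.5) the map $\epsilon_{FX}$ is a weak equivalence, and naturality of $\epsilon$ together with the acyclic cofibration $j$ (and the weak equivalence $Fj$ from (A.4)) forces $\epsilon_Z$ to be a weak equivalence by two-out-of-three. Since $\epsilon_{FY}$ is a weak equivalence by (A.5) as well, the square for $q$ has both vertical maps weak equivalences, so a direct check confirms $q$ is an $F$-fibration. Applying (A.6) to the pullback $V\to Z$ of the $F$-equivalence $\epsilon_Y$ along the $F$-fibration $q$ shows $V\to Z$ is an $F$-equivalence; combining with $(V\to Z)\circ\phi=j\circ\epsilon_X$ being an $F$-equivalence (by (A.4) and (A.5)) and two-out-of-three yields that $\phi$ is an $F$-equivalence.

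For the forward direction, suppose $p$ is an $F$-fibration. Since (A.4) makes every acyclic cofibration an $F$-acyclic cofibration, $p$ is a fibration. To upgrade the $F$-equivalence $\phi$ to a weak equivalence, factor $\phi$ as an acyclic cofibration $\alpha\co X\to X^*$ followed by a fibration $\beta\co X^*\to V$. Two-out-of-three makes $\beta$ an $F$-equivalence and the composite $X^*\to Y$ a fibration. The $F$-fibration property of $p$ produces a retraction $r\co X^*\to X$ with $r\alpha=\id_X$, compatibly over $Y$. Running the same $F$-locality bootstrap as for $q$ shows $V\to Y$ is itself an $F$-fibration, placing $\phi$ as an $F$-equivalence between two $F$-fibrant objects of $\mc{C}/Y$. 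The standard fact that $F$-equivalences between $F$-fibrant objects are genuine weak equivalences (a consequence of the model structure supplied by theorem \ref{thm:bousfield-machine}) then gives $\phi$ a weak equivalence.

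For the reverse direction, suppose $p$ is a fibration and $\phi$ is a weak equivalence. Given an $F$-acyclic cofibration $i\co A\to B$ and a lifting problem $A\to X$, $B\to Y$ against $p$, I first build $B\to V$ extending $A\to X\to V$ over $B\to Y$: the pullback description of $V$ reduces this to lifting $B\to Z$ against $q$, which exists because $q$ is an $F$-fibration and $i$ is an $F$-acyclic cofibration. Factoring $\phi$ as a cofibration $\gamma\co X\to X''$ followed by an acyclic fibration $\delta\co X''\to V$, with $\gamma$ an acyclic cofibration by two-out-of-three, the fibration $p$ lifts against $\gamma$ to a retraction $r\co X''\to X$ over $Y$; lifting $B\to V$ against $\delta$ produces $B\to X''$ extending $A\to X\to X''$, and composing with $r$ solves the original lifting problem. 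The main obstacle is the forward direction's promotion of $\phi$ from an $F$-equivalence to a weak equivalence, hinging on identifying $V\to Y$ as an $F$-fibration and invoking the characterization of $F$-equivalences between $F$-fibrant objects.
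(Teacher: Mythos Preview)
The paper does not supply a proof of this lemma; it is quoted as part of the Bousfield--Friedlander package from \cite{BF:gamma} and \cite{Bou:telescopic}, where the characterization of $F$-fibrations is established simultaneously with the construction of the factorizations for the $F$-model structure. Your attempt to prove it independently has a genuine gap at a load-bearing step: the assertion that $q\co Z\to FY$ is an $F$-fibration via ``a direct check''. You correctly show that $\epsilon_Z$ and $\epsilon_{FY}$ are weak equivalences, so the naturality square for $q$ is a homotopy pullback --- but concluding from this that $q$ is an $F$-fibration is precisely the reverse implication of the lemma you are proving, so invoking it here is circular. No independent verification of the right lifting property of $q$ against $F$-acyclic cofibrations is given, and none is available from (A.4)--(A.6) and the bare existence of the $F$-model structure: factoring $q$ in $\mc{C}_F$ as an $F$-acyclic cofibration followed by an $F$-fibration does not let you conclude the first factor is an ordinary weak equivalence without already knowing that $Z$ or $FY$ is $F$-fibrant, which you do not.

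This gap propagates through both directions of your argument. You apply (A.6) with $q$ in the role of the $F$-fibration to deduce that $V\to Z$ (and hence $\phi$) is an $F$-equivalence; in the forward direction you then need $V\to Y$ to be an $F$-fibration (your ``same $F$-locality bootstrap'') to invoke the slice version of ``$F$-equivalences between $F$-fibrant objects are weak equivalences''; and in the reverse direction you lift $B\to Z$ against $q$ explicitly using that $q$ is an $F$-fibration. In the original sources the logic runs the other way: one first proves the reverse implication directly (a fibration satisfying the homotopy-pullback condition has the RLP against $F$-acyclic cofibrations, via an argument that builds the lift through $F$ and right properness rather than by lifting against $q$), then uses your $V$ to manufacture the (\,$F$-acyclic cofibration, $F$-fibration\,) factorization, and finally obtains the forward implication by the retract argument. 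If you want to rescue your approach, you must either establish the reverse direction without assuming $q$ is an $F$-fibration, or replace your factorization of $Fp$ by one whose second factor you can verify to be an $F$-fibration by other means.
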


\section{Small functors}

We want to study categories of functors from full subcategories of \mc{V} to \mc{V}-model categories. In order to have the most flexibility we choose to work with small functors, since we do not want to restrict ourselves only to small source categories.
\begin{definition}\label{small}
A \mc{V}-functor $\dgrm{X}\co\mc{N}\to\mc{M}$ between two \mc{V}-categories $\mc{N}$ and $\mc{M}$ is {\it small} if it is the \mc{V}-left Kan extension from its restriction to some small full subcategory \mc{T} of $\mc{N}$. We call \mc{T} the defining subcategory of \dgrm{X}.
Let $\mc{M}^{\mc{N}}$  denote the category of small functors from $\mc{N}$ to $\mc{M}$. 
\end{definition}

\begin{remark}\label{LKan}
\begin{punkt}
   \item
Small functors are \mc{V}-functors by our definition.
   \item \label{LKanii}
Let $i\co\mc{U}_1\inj\mc{U}_2$ be an inclusion of full subcategories of \mc{V}, then the restriction functor
    $$ i^*\co\mc{M}^{\mc{U}_2}\to\mc{M}^{\mc{U}_1} $$
has a left adjoint 
    $$ \LKan_i\co\mc{M}^{\mc{U}_1}\to\mc{M}^{\mc{U}_2} $$
given by \mc{V}-left Kan extension and the category $\mc{M}^{\mc{U}_1}$ becomes a retract (up to natural equivalence) of the category $\mc{M}^{\mc{U}_2}$. In this way we will view the category $\mc{M}^{\mc{U}}$ as full subcategory of $\mc{M}^{\mc{V}}$. So we can always assume, that our functors are actually defined on \mc{V}. 
\end{punkt}
\end{remark}

\begin{definition}
For an object $V$ in \mc{V} its {\it covariant \mc{V}-representable functor} $\mc{V}\to\mc{V}$ will be denoted by
    $$ R^V(\frei)=\mc{V}(V,\frei).$$
This functor is obviously small with defining subcategory $\{V\}$.
\end{definition}

\begin{remark} \label{properties of smallness}
\begin{punkt}
   \item
If the unit $S$ of \mc{V} is cofibrant, all representables
   \item
It is standard \cite[Prop. 4.83]{Kelly}, that a functor $\dgrm{X}\co\mc{V}\to\mc{M}$ is small if and only if it is an enriched colimit of representables, i.e. there exists a small full subcategory \mc{K} of \mc{V} such that 
    $$ \dgrm{X} \cong \int^{K\in\mc{K}}R^K\otimes\dgrm{X}K. $$ 
Using this and \cite[Prop. 8.3]{Day-Lack:limits}, that if $\dgrm{W}\co\mc{V}\to\mc{V}$ and $\dgrm{Y}\co\mc{M}\to\mc{M}$ are small, then $\dgrm{X}\circ\dgrm{W}$ and $\dgrm{Y}\circ\dgrm{X}$ are small, too. 
   \item
For any full subcategory \mc{U} of \mc{V} the category $\mc{M}^\mc{U}$ is cocomplete by definition. We also want it to be complete. This holds obviously, when \mc{U} is small.
But the remarkable result \cite[Theorem 8.5]{Day-Lack:limits} of Day and Lack shows, that this also holds, when \mc{U} is cocomplete. Limits and colimits are both computed objectwise.
\end{punkt}
\end{remark}

\begin{definition}\label{V + small}
Let \dgrm{X} and \dgrm{Y} be functors in $\mc{M}^{\mc{U}}$ and let $V$ be an object of \mc{V}. We define a {\it tensor} $\otimes\co\mc{M}^{\mc{U}}\times\mc{V}\to\mc{M}^{\mc{U}}$ by
    $$ (\dgrm{X}\otimes V)(K)=\dgrm{X}(K)\otimes V.$$
The {\it enrichment} of $\mc{M}^{\mc{U}}$ is given by the \mc{V}-object of natural transformations of \mc{V}-functors
    $$ \Vnat(\dgrm{X},\dgrm{Y})=\int_{K\in\mc{K}}\mc{V}(\dgrm{X}(K),\dgrm{Y}(K)), $$
where the end is taken over a small full subcategory \mc{K} of \mc{U} defining \dgrm{X}. 
The {\it cotensor} $\mc{V}^{\rm op}\times\mc{M}^{\mc{U}}\to\mc{M}^{\mc{U}}$ is given by
    $$ (\dgrm{X}^V)(K)=\dgrm{X}(K)^V=\mc{V}(V,\dgrm{X}(K)), $$
where the right hand side is the cotensor of the underlying \mc{V}-category \mc{M}.
\end{definition}

Consistency would demand denoting the \mc{V}-enrichment of $\mc{M}^{\mc{U}}$ by $\mc{M}^{\mc{U}}(\dgrm{X},\dgrm{Y})$, but we save this notation for the enrichment of $\mc{M}^{\mc{U}}$ over itself \ref{M^V over itself}.

It is easy to check, that there are canonical isomorphisms
    $$ \Vnat(\dgrm{X},\dgrm{Y}^K)\cong\Vnat(\dgrm{X}\otimes K,\dgrm{Y})\cong\mc{V}(K,\Vnat(\dgrm{X},\dgrm{Y})),$$
so that $\mc{M}^{\mc{U}}$ is indeed a \mc{V}-category.

\section{Fixing notation and first assumptions}
\label{section:fixing}

We reserve the letter \mc{V} for a locally presentable cofibrantly generated symmetric monoidal model category. 
We denote the monoidal product by $\otimes$ and the unit by $S$, which has to be cofibrant.
Let $I_{\mc{V}}$ and $J_{\mc{V}}$ be sets of generating cofibrations and acyclic cofibration.
We additionally assume that \mc{V} possesses functorial fibrant replacements, i.e. for each object $K$ there exists a functorial weak equivalence $K\to K^\fib$ into a fibrant object $K^\fib$. 
We choose one such replacement functor $\fibfun$ and we assume, that it is small, see definition \ref{small}. 

\begin{definition}
An object $L$ in the symmetric monoidal category \mc{V} is called {\it \mc{V}-small}, if the \mc{V}-enriched representable $R^L$ commutes with sequential \mc{V}-colimits, i.e. if the canonical map
    $$ \colim_{n\in\mathbbm{N}}R^L(K_n)=\colim_{n\in\mathbbm{N}}\mc{V}(L,K_n)\to\mc{V}(L,\colim_{n\in\mathbbm{N}}K_n)=R^L(\colim_{n\in\mathbbm{N}}K_n)$$
is an isomorphism for every sequential system $(K_n)_{n\in\mathbbm{N}}$.
\end{definition}

The letter $L$ will always stand for a \mc{V}-small cofibrant object $L$ in \mc{V}. If the occasion arises, we will use $L'$ for another such object with the same properties.

We reserve the letter \mc{U} for a full subcategory of \mc{V}, that is either small or cocomplete. We will gradually add more assumptions on \mc{U}, as we go along, see \ref{Uho}, \ref{U1}, \ref{U2} and \ref{U3}. 

We reserve the letter \mc{M} for a right proper locally presentable cofibrantly generated \mc{V}-model category. Let $I_{\mc{M}}$ and $J_{\mc{M}}$ be sets of generating cofibrations and acyclic cofibration.
Again we assume that \mc{M} possesses functorial small fibrant replacements and we choose such a replacement functor $\fibfun$. 

\begin{definition}\label{ulvmod}
We denote by $\ulvmod$ the full subcategory of \mc{V}-mod \ref{V-mod} of right proper locally presentable cofibrantly generated \mc{V}-model categories possessing a functorial small fibrant replacement. 
\end{definition}

Observe, that $L$ induces left Quillen endofunctors of \mc{M} and \mc{V} 
    $$ K\mapsto L(K)=K\otimes L $$
given by tensoring with $L$  and denoted by the same letter. 
The right adjoint of $L$ will be denoted by 
    $$ R(K)=K^L. $$
It commutes with filtered colimits and homotopy colimits, since $L$ is small.

\begin{remark}\label{alpha-commute}
Let $\alpha$ be a regular cardinal.
We remind the reader, that in an $\alpha$-presentable category $\alpha$-limits commute with $\alpha$-filtered colimits \cite[Cor. 5.2.8]{Bor:2}. In particular, finite homotopy limits commute with filtered homotopy colimits.
\end{remark}

\section{The projective model structure}
\label{section:projective model structure}

Following \cite{Chorny-Dwyer:small} we can equip $\mc{M}^{\mc{U}}$ with the projective model structure: weak equivalences and fibrations are given objectwise. Fibrations are detected by the class $R^U\otimes J_{\mc{M}}$, acyclic fibrations are detected by the class $R^U\otimes I_{\mc{M}}$, where $U$ runs through all objects of \mc{U}. 
Here $I_\mc{M}$ denotes a generating set of cofibrations for \mc{M}, and $J_{\mc{M}}$ denotes a set of generating acyclic cofibrations.
Both classes are locally small, i.e. they satisfy the co-solution set condition, and all the source and target objects are small in the sense, that mapping out of them commutes with filtered colimits. Therefore we can use the generalized small object argument from \cite{Chorny:general} in the same way as in \cite{Chorny-Dwyer:small} to prove the existence of the projective model structure. 

\begin{theorem}\label{V + projective}
The category $\mc{M}^{\mc{U}}$ equipped with the projective model structure is class-cofibrantly generated \mc{V}-model category. It is right or left proper, if \mc{M} is so. If \mc{U} is a small full subcategory of \mc{V}, then $\mc{M}^{\mc{U}}$ is cofibrantly generated and hence possesses functorial factorization. 
\end{theorem}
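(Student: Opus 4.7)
The plan is to apply the recognition machinery for class-cofibrantly generated model structures to the generating classes
\begin{equation*}
\wt{I}=\{R^U\otimes i\co U\in\mc{U},\ i\in I_{\mc{M}}\},\qquad
\wt{J}=\{R^U\otimes j\co U\in\mc{U},\ j\in J_{\mc{M}}\},
\end{equation*}
and then to verify the \mc{V}-enrichment, properness, and the case of small $\mc{U}$ separately.

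First I would identify the intended (acyclic) fibrations via the generators. By the enriched Yoneda lemma $\Vnat(R^U,\dgrm{Y})\cong\dgrm{Y}(U)$ together with the tensor--cotensor adjunction of \ref{V + small}, there is a natural bijection between maps $R^U\otimes A\to\dgrm{Y}$ in $\mc{M}^{\mc{U}}$ and maps $A\to\dgrm{Y}(U)$ in \mc{M}. Hence $f\co\dgrm{X}\to\dgrm{Y}$ has the right lifting property against $\wt{J}$ (resp.\ $\wt{I}$) if and only if every $f_U$ is a fibration (resp.\ acyclic fibration) in \mc{M}; this identifies $\wt{J}$-inj and $\wt{I}$-inj with the desired objectwise classes.

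Next I would invoke Chorny's generalized small object argument. Since colimits in $\mc{M}^{\mc{U}}$ are computed objectwise by \cite[Thm.~8.5]{Day-Lack:limits}, the adjunction above transfers smallness of the domains of $I_{\mc{M}}$ and $J_{\mc{M}}$ in \mc{M} to smallness of the domains of $\wt{I}$ and $\wt{J}$ in $\mc{M}^{\mc{U}}$ with respect to filtered colimits, and the co-solution set condition is verified as in \cite{Chorny-Dwyer:small}. This yields functorial factorizations into an $\wt{I}$-cell map followed by an $\wt{I}$-injective map, and analogously for $\wt{J}$. The principal obstacle is then to show that $\wt{J}$-cell consists of objectwise weak equivalences: evaluating a pushout of $R^{U'}\otimes j$ at $U$ produces a pushout in \mc{M} of $\mc{V}(U',U)\otimes j$ along some map, and one needs this to be a weak equivalence. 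Applying the pushout product axiom for the \mc{V}-model category \mc{M} to $\emptyset\to\mc{V}(U',U)$ together with $j$ reduces this to a monoid-axiom-type statement on the tensor of \mc{M}, handled exactly as in Chorny-Dwyer. The remaining model-category axioms (2-out-of-3, retracts, lifting) then follow formally from the identification in the previous step.

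Finally, the \mc{V}-pushout product axiom reduces by \ref{smash compatible and generators} to checking on generators: $(R^U\otimes i)\,\square\,(a\to b)\cong R^U\otimes(i\,\square\,(a\to b))$ by distributivity of the tensor over colimits, and the right-hand side lies in $\wt{I}$-cof (resp.\ $\wt{J}$-cof) because $i\,\square\,(a\to b)$ is a cofibration (resp.\ acyclic cofibration) of \mc{M} by \ref{V-model category}, while the left adjoint $R^U\otimes(\frei)\co\mc{M}\to\mc{M}^{\mc{U}}$ sends $I_{\mc{M}}$-cells to $\wt{I}$-cells (and likewise for $J_{\mc{M}}$). The unit axiom is verified objectwise, and right (resp.\ left) properness transfers directly from \mc{M} since pullbacks, pushouts, fibrations, cofibrations and weak equivalences in $\mc{M}^{\mc{U}}$ are all computed objectwise. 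When \mc{U} is small, the classes $\wt{I}$ and $\wt{J}$ are honest sets, so the classical small object argument applies and yields functorial factorizations in the ordinary cofibrantly generated sense.
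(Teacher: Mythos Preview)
Your proposal is considerably more ambitious than the paper's own proof. The paper does not re-derive the existence of the projective model structure at all: the paragraph preceding the theorem simply invokes \cite{Chorny-Dwyer:small} for the class-cofibrantly generated model structure, and the proof proper only addresses three points. First, the \mc{V}-model axiom is checked via the \emph{dual} cotensor formulation: for a cofibration $i\co A\to B$ in \mc{V} and an objectwise fibration $p\co\dgrm{X}\to\dgrm{Y}$, the map $\dgrm{X}^B\to\dgrm{Y}^B\times_{\dgrm{Y}^A}\dgrm{X}^A$ is an objectwise fibration (acyclic if $i$ or $p$ is) simply because cotensors are computed objectwise and \mc{M} is a \mc{V}-model category. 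Second, properness is argued objectwise exactly as you do. Third, for small \mc{U} the generating classes become sets. Your generator-based verification of the pushout product axiom via \ref{smash compatible and generators} is correct, but the paper's cotensor route is a one-liner and avoids any appeal to generators.

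There is, however, a genuine soft spot in your re-derivation of the model structure. When you evaluate $R^{U'}\otimes j$ at $U$ you obtain $\mc{V}(U',U)\otimes j$, and you invoke the pushout product axiom for $\emptyset\to\mc{V}(U',U)$ against $j$. That axiom only yields an acyclic cofibration when $\emptyset\to\mc{V}(U',U)$ is a cofibration, i.e.\ when the hom-object $\mc{V}(U',U)$ is cofibrant; this is \emph{not} among the standing hypotheses of section~\ref{section:fixing}, and the monoid axiom for \mc{M} is likewise not assumed at this point. Your phrase ``handled exactly as in Chorny--Dwyer'' papers over this: that reference works in the simplicial setting, where every object of \mc{V} is cofibrant and the issue evaporates. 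The paper sidesteps the question by citing \cite{Chorny-Dwyer:small} wholesale rather than reproving the factorization, so your write-up is not worse than the paper's on this point, but you should be aware that you have not actually closed the gap in the generality you claim.
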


\begin{proof}
We have to check, that $\mc{M}^{\mc{U}}$ is a \mc{V}-model category. Let $i\co A\to B$ be a cofibration in \mc{V} and let $p\co\dgrm{X}\to\dgrm{Y}$ be a fibration in $\mc{M}^{\mc{U}}$. Then the induced map 
    $$ \dgrm{X}^B\to\dgrm{Y}^B\times_{\dgrm{Y}^A}\dgrm{X}^A $$
is an objectwise fibration, since cotensors are defined objectwise. It is an acyclic fibration if either $i$ or $p$ are acyclic, for the same reason.

If \mc{M} is left proper, then $\mc{M}^{\mc{U}}$ is left proper, because  weak equivalences and cofibrations are in particular objectwise and pushouts are computed objectwise. The same applies to right properness.

If \mc{U} is a small full subcategory, the generating classes of cofibrations and acyclic cofibrations are in fact sets, and so factorization is functorial.
\end{proof}

In general we do not expect to have functorial factorization. The reason is, that we do not know, how to choose the co-solution sets for the generalized small object argument in a functorial way. This leaves all further localizations of the projective model structure on $\mc{M}^{\mc{U}}$ to be without functorial factorization, unless the source category is small. But we think, this is a small price to pay in order to handle more general source categories than just small ones. 

\begin{remark}\label{rep. cof}
If the unit $S$ of \mc{V} is cofibrant, all representable functors $R^K$ are projectively cofibrant.
\end{remark}

\section{The homotopy model structure}
\label{section:homotopy model}
We will now outline, how to localize the projective model structure on $\mc{M}^{\mc{U}}$ to obtain the homotopy model structure, where the fibrant objects are exactly the projectively fibrant homotopy functors. 

\begin{axiom}\label{Uho}
To construct the homotopy model structure on $\mc{M}^{\mc{U}}$ we have to introduce further assumption on \mc{U}:
\begin{enumerate}
    \item
All objects in \mc{U} are cofibrant.
    \item
It is closed under the fibrant approximation functor $\fibfun\co\mc{U}\to\mc{U}$.  
\end{enumerate}
The last condition means, that for every object $K$ there exists a functorial weak equivalence into a fibrant object $K^\fib$, which is still in \mc{U}.
\end{axiom}

In the model category \mc{V} we have a Whitehead lemma \ref{Whitehead}: a map between fibrant and cofibrant objects is a weak equivalence if and only if it is a \mc{V}-homotopy equivalence. If we have a weak equivalence $w$ between fibrant objects in \mc{U}, it is already a \mc{V}-homotopy equivalence, since all objects in \mc{U} are assumed to be cofibrant. 
Now let \dgrm{X} be in $\mc{M}^{\mc{U}}$.
The weak equivalence $w$ is mapped to a \mc{V}-homotopy equivalence $\dgrm{X}(w)$, because small functors are \mc{V}-functors.
Altogether, if we precompose an arbitrary small functor \dgrm{X} with a small fibrant replacement functor $\fibfun$, the resulting functor $\dgrm{X}\fibfun$ will be small as explained in remark \ref{properties of smallness} and it will be a homotopy functor.
To obtain the homotopy model structure we just observe, that the functor $\dgrm{X}\mapsto\dgrm{X}\fibfun$ satisfies the axioms of \ref{thm:bousfield-machine}, as was proved in \cite[Prop. 3.3]{BCR:calc}. 

In the homotopy model structure on $\mc{M}^{\mc{U}}$ a map $\dgrm{X}\to\dgrm{Y}$ is 
\begin{enumerate}
   \item
a cofibration if and only if it is a projective cofibration, 
   \item
a weak equivalences if and only if the induced map
     $$ \dgrm{X}\fibfun\to\dgrm{Y}\fibfun $$
is an objectwise weak equivalence.
   \item
a fibration if and only if the square
\diagr{ \dgrm{X} \ar[r]\ar[d] & \dgrm{X}\fibfun \ar[d] \\ \dgrm{Y} \ar[r] & \dgrm{Y}\fibfun }
is an objectwise homotopy pullback square. Compare \ref{F-fibrant}. 
\end{enumerate}
Fibrant objects are exactly the objectwise fibrant homotopy functors.

\begin{definition}\label{h}
The functor $\fibfun$ is an endofunctor of \mc{U}, that we assume to be small. Then the following functor denoted by $(\frei)^h$
    $$\dgrm{X}\mapsto\fibfun\circ\dgrm{X}\circ\fibfun=\dgrm{X}^h$$ 
is a fibrant replacement functor in the homotopy model structure on $\mc{M}^{\mc{U}}$. 
\end{definition}

\begin{theorem}\label{V + homotopy}
The category $\mc{M}^{\mc{U}}$ equipped with the homotopy model structure is right proper \mc{V}-model category. It is left proper if \mc{M} is so. 
\end{theorem}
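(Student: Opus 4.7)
Essentially everything follows formally. By Theorem \ref{thm:bousfield-machine} applied to the coaugmented endofunctor $(-)^h$ of Definition \ref{h} on the projective model structure of Theorem \ref{V + projective}, the homotopy model structure is right proper, and it is left proper whenever the projective one is. By Theorem \ref{V + projective} this occurs precisely when $\mc{M}$ is left proper. The three axioms (A.4)--(A.6) have already been verified in \cite[Prop.~3.3]{BCR:calc}, so the properness half of the statement is complete without further work.

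Only the $\mc{V}$-model axioms of Definition \ref{V-model category} remain. Condition (ii) transfers verbatim from the projective case since the underlying map $QS \otimes X \to X$ is unchanged and no weak equivalences have been removed. For the pushout-product axiom (i), I pass to the dual form: given a homotopy fibration $p \co X \to Y$ and a $\mc{V}$-cofibration $i \co V \to W$, I must show that $p \boxtimes i \co X^W \to Y^W \times_{Y^V} X^V$ is a homotopy fibration, which is acyclic if either $p$ or $i$ is. Since cofibrations---and hence acyclic fibrations---agree in the two structures on $\mc{M}^{\mc{U}}$, Theorem \ref{V + projective} already gives that $p \boxtimes i$ is always a projective fibration, and hands over acyclicity for free whenever $i$ is $\mc{V}$-acyclic, or when $p$ is an acyclic (projective) fibration. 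The only remaining case is that $p$ is a non-acyclic homotopy fibration and $i$ a general $\mc{V}$-cofibration, where one must upgrade $p \boxtimes i$ from projective fibration to homotopy fibration.

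For this, Lemma \ref{F-fibrant} reduces the task to checking that the naturality square
\diagr{ X^W \ar[r]\ar[d]_{p \boxtimes i} & (X^W)^h \ar[d] \\ Y^W \times_{Y^V} X^V \ar[r] & (Y^W \times_{Y^V} X^V)^h }
is an objectwise homotopy pullback in $\mc{M}$. Since cotensor, pullback, and the functor $(-)^h$ are all computed pointwise on $\mc{U}$, at each $K \in \mc{U}$ the square is the pullback-hom of $i$ against the homotopy pullback square for $p$ at $K^{\fib}$---a square that is a homotopy pullback precisely because $p$ is assumed to be a homotopy fibration, via Lemma \ref{F-fibrant} applied to $p$ itself. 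The main obstacle is that the cotensor $(-)^B$ by a general $B \in \mc{V}$ does not commute on the nose with the fibrant replacement $(-)^{\fib}$ of $\mc{M}$; one compares $(X(K^{\fib})^B)^{\fib}$ with $(X(K^{\fib})^{\fib})^B$ up to weak equivalence by invoking Ken Brown's lemma for the right Quillen functor $(-)^B$ when $B$ is cofibrant, and reducing the general case to the cofibrant one via a cofibrant replacement $QB \to B$ in $\mc{V}$ together with the $\mc{V}$-model axioms of $\mc{M}$. This last bookkeeping is where all the work lies; everything else in the proof is inheritance from Theorem \ref{V + projective} and Theorem \ref{thm:bousfield-machine}.
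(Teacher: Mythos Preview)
Your overall strategy is sound and would go through, but you have made the $\mc{V}$-model verification harder than necessary by choosing the wrong localizing endofunctor. The homotopy model structure in the paper is obtained by applying Theorem~\ref{thm:bousfield-machine} to $F\colon \dgrm{X}\mapsto \dgrm{X}\fibfun$ (precomposition with the fibrant replacement on~$\mc{U}$), \emph{not} to $(-)^h=\fibfun\circ(-)\circ\fibfun$; the latter is merely a fibrant replacement in the resulting structure (Definition~\ref{h}). The paper exploits this: precomposition commutes \emph{on the nose} with objectwise limits and cotensors, so one has a strict natural isomorphism
\[
(\dgrm{Y}^{K_2}\times_{\dgrm{Y}^{K_1}}\dgrm{X}^{K_1})\fibfun\;\cong\;\dgrm{Y}^{K_2}\fibfun\times_{\dgrm{Y}^{K_1}\fibfun}\dgrm{X}^{K_1}\fibfun,
\]
and the Lemma~\ref{F-fibrant} square for $p\boxtimes i$ is literally built from the Lemma~\ref{F-fibrant} square for $p$ by cotensoring and pulling back. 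No comparison of $(Z^B)^{\fib}$ with $(Z^{\fib})^B$ is ever needed, and the whole ``bookkeeping'' paragraph disappears.

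Your Ken Brown step also has a small gap as written: to conclude that $(-)^B$ sends $X(K^{\fib})\to X(K^{\fib})^{\fib}$ to a weak equivalence you would need $X(K^{\fib})$ to be fibrant, which is not assumed. The clean fix is exactly the observation above: the $(-)^h$-square and the $(-)\fibfun$-square for any map differ only by the objectwise weak equivalence $Z\to Z^{\fib}$ on the right-hand column, hence are homotopy pullbacks simultaneously; so you may pass to the $(-)\fibfun$-characterization and avoid the issue altogether.
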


\begin{proof} 
We have to show, that for a homotopy fibration $p\co\dgrm{X}\to\dgrm{Y}$ in $\mc{M}^{\mc{U}}$ and a cofibration $i\co K_1\to K_2$ in \mc{V} the induced map
    $$ p\boxtimes i\co \dgrm{X}^{K_2}\to\dgrm{Y}^{K_2}\times_{\dgrm{Y}^{K_1}}\dgrm{X}^{K_1} $$
is a homotopy fibration. But this follows from the characterization of homotopy fibrations in \ref{F-fibrant} and the fact, that there is a natural isomorphism
   $$ (\dgrm{Y}^{K_2}\times_{\dgrm{Y}^{K_1}}\dgrm{X}^{K_1})\fibfun\cong\dgrm{Y}^{K_2}\fibfun\times_{\dgrm{Y}^{K_1}\fibfun}\dgrm{X}^{K_1}\fibfun. $$
The assertion about left properness follows from theorem \ref{thm:bousfield-machine}.
\end{proof}

The following fact is merely a tautology, but it is a useful different point of view.
\begin{lemma}\label{fib + we}
A map is weak equivalence in the homotopy model structure if and only if its restriction to the category of fibrant objects is an objectwise weak equivalence.
\end{lemma}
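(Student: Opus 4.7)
The plan is to unpack the definitions and use the enriched Whitehead lemma together with the axiom that all objects of $\mc{U}$ are cofibrant to pass between fibrant objects and arbitrary objects of $\mc{U}$.

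First I would set up the easy direction. Suppose the restriction of $f\co \dgrm{X}\to\dgrm{Y}$ to the full subcategory of fibrant objects of $\mc{U}$ is an objectwise weak equivalence. For any $K$ in $\mc{U}$ the object $K^{\fib}$ lies in $\mc{U}$ (by axiom \ref{Uho}) and is fibrant, so $\dgrm{X}(K^{\fib})\to\dgrm{Y}(K^{\fib})$ is a weak equivalence. This is exactly the statement that $\dgrm{X}\fibfun\to\dgrm{Y}\fibfun$ is an objectwise weak equivalence, which is the definition of a weak equivalence in the homotopy model structure.

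For the converse, assume $\dgrm{X}\fibfun\to\dgrm{Y}\fibfun$ is an objectwise weak equivalence and let $K$ in $\mc{U}$ be fibrant. Here is where the key trick enters: the map $K\to K^{\fib}$ is a weak equivalence between objects that are simultaneously fibrant (by our choice of $K$ and the definition of $\fibfun$) and cofibrant (by axiom \ref{Uho}(1)). By the $\mc{V}$-enriched Whitehead lemma \ref{Whitehead} it is a $\mc{V}$-homotopy equivalence. Since $\dgrm{X}$ and $\dgrm{Y}$ are small functors, hence $\mc{V}$-functors, they preserve $\mc{V}$-homotopy equivalences, so both $\dgrm{X}(K)\to\dgrm{X}(K^{\fib})$ and $\dgrm{Y}(K)\to\dgrm{Y}(K^{\fib})$ are $\mc{V}$-homotopy equivalences in $\mc{M}$, and in particular weak equivalences. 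Together with the assumption at $K^{\fib}$, two-out-of-three in the naturality square
\diagr{ \dgrm{X}(K) \ar[r]\ar[d] & \dgrm{X}(K^{\fib}) \ar[d] \\ \dgrm{Y}(K) \ar[r] & \dgrm{Y}(K^{\fib}) }
forces $\dgrm{X}(K)\to\dgrm{Y}(K)$ to be a weak equivalence, completing the restriction statement.

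There is no real obstacle here — the statement is essentially a bookkeeping consequence of the Whitehead lemma combined with the cofibrancy hypothesis on $\mc{U}$. The only point that warrants care is making sure that $K$ and $K^{\fib}$ are both cofibrant and fibrant when applying \ref{Whitehead}; the cofibrancy comes from \ref{Uho}(1) and the fibrancy of $K^{\fib}$ from the definition of the fibrant replacement, while $K$ is fibrant by assumption in this direction.
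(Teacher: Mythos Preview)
Your argument is correct and is precisely what the paper intends; the paper itself gives no proof, dismissing the statement as ``merely a tautology'', and your unpacking via the Whitehead lemma together with axiom~\ref{Uho}(1) mirrors the reasoning used just before Definition~\ref{h} to show that $\dgrm{X}\fibfun$ is a homotopy functor. One small point worth tightening: when you conclude that the $\mc{V}$-homotopy equivalences $\dgrm{X}(K)\to\dgrm{X}(K^{\fib})$ are ``in particular weak equivalences'', note that these objects of $\mc{M}$ need not themselves be fibrant or cofibrant, so Remark~\ref{Whitehead} does not literally apply. The implication is still valid---$\mc{V}$-homotopic maps become equal in $\ho{\mc{M}}$ (precompose the $\mc{V}$-homotopy with a cofibrant replacement of the source and use Lemma~\ref{left=V} to get a genuine left homotopy), so any $\mc{V}$-homotopy equivalence is an isomorphism in $\ho{\mc{M}}$ and therefore a weak equivalence---but this is a step the paper also glosses over and is worth making explicit.
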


\begin{question}
The assumption, that all objects in \mc{U} be cofibrant, is unfortunate. Is there a better way to construct the homotopy model structure without this assumption, in particular if \mc{U} is not small?
This needs further investigation. 
\end{question}

To proceed we will now find useful generators for the acyclic cofibrations in the homotopy model structure. 

\begin{definition}\label{J^ho}
Let $f\co A\to B$ be an arbitrary acyclic cofibration in \mc{U} and consider the induced map $f^*\co R^B\to R^A$ of representable functors. Now factor $f^*$ into a projective cofibration $j\co R^B\to\cyl(f^*)$, which in the terminology of \ref{cyl(f)} is given by $j=i_1(f^*)$, followed by the objectwise fibration $p\co\cyl(f^*)\to R^A$. Let us denote the set of all maps of the form $j$, where $f$ runs through the set of all acyclic cofibrations between objects in \mc{U}, by $J'$. Then form the set $J''$ of pushout products $j\,\square\, i$, where $j\in J'$ and $i\in I_{\mc{M}}$.
Finally define the set of maps in $\mc{M}^{\mc{U}}$
\begin{equation*}
    J^{\rm ho}_{\mc{M}^{\mc{U}}}:= J''\cup J_{\mc{M}^{\mc{U}}}^{\rm proj}. 
\end{equation*}
Note that all maps in $J''$ are projective cofibrations by \ref{V + projective}.
\end{definition}

\begin{theorem}\label{generators for ho}
A map in $\mc{M}^{\mc{U}}$ has the right lifting property with respect to the class $J^{\rm ho}_{\mc{M}^{\mc{U}}}$ if and only if it is a fibration in the homotopy model structure.
\end{theorem}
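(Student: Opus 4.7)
The plan is to recognize $J^{\rm ho}_{\mc{M}^{\mc{U}}}=J''\cup J^{\rm proj}_{\mc{M}^{\mc{U}}}$ as detecting precisely the two clauses in the Bousfield-Friedlander characterization of fibrations from Lemma \ref{F-fibrant}: a map is a homotopy fibration iff it is a projective fibration \emph{and} its naturality square with $(\frei)^h$ is an objectwise homotopy pullback. Since RLP against $J^{\rm proj}_{\mc{M}^{\mc{U}}}$ is by definition equivalent to being a projective fibration, the theorem reduces to showing that, for a projective fibration $p\co\dgrm{X}\to\dgrm{Y}$, lifting against $J''$ is equivalent to the homotopy pullback condition.

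First I would reformulate the lifting problem for $j\,\square\, i$, $j\in J'$ and $i\in I_{\mc{M}}$, via the tensor-cotensor adjunction for $\mc{V}^{\mc{U}}\otimes\mc{M}\to\mc{M}^{\mc{U}}$. By the standard pushout-product adjunction, $p$ lifts against $j\,\square\, i$ for every $i\in I_{\mc{M}}$ iff the map
    $$ \Vnat(\cyl(f^*),\dgrm{X}) \to \Vnat(\cyl(f^*),\dgrm{Y})\times_{\Vnat(R^B,\dgrm{Y})}\Vnat(R^B,\dgrm{X}) $$
in $\mc{M}$ is an acyclic fibration. Yoneda gives $\Vnat(R^K,\dgrm{Z})=\dgrm{Z}(K)$, and the pushout defining $\cyl(f^*)$ (together with the Kan property of $\Vnat$ on colimits) gives $\Vnat(\cyl(f^*),\dgrm{Z})=\dgrm{Z}(A)\times_{\dgrm{Z}(B)}\dgrm{Z}(B)^{\cyl(S)}$. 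Since $p$ is a projective fibration, every entry is fibrant in $\mc{M}$, and $\dgrm{Z}(B)^{\cyl(S)}$ is a path object for $\dgrm{Z}(B)$ (by the dual of Lemma \ref{cylinder} applied to the cotensor). A direct bookkeeping then identifies the above map with the canonical comparison from $\dgrm{X}(A)$ to the homotopy pullback of $\dgrm{X}(B)\to\dgrm{Y}(B)\leftarrow\dgrm{Y}(A)$. Hence $p$ has RLP against $J''$ iff for every acyclic cofibration $f\co A\to B$ in $\mc{U}$ the square
    $$\xymatrix{ \dgrm{X}(A) \ar[r]\ar[d] & \dgrm{X}(B) \ar[d] \\ \dgrm{Y}(A) \ar[r] & \dgrm{Y}(B) }$$
is a homotopy pullback in $\mc{M}$.

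Second, I would show this latter condition is equivalent, under the standing hypothesis that $p$ is a projective fibration, to the Bousfield-Friedlander square $p\to p^h$ being an objectwise homotopy pullback. For the implication $(\Leftarrow)$ of this equivalence, I apply the condition to $f\co K\to K^{\fib}$ (which, factored as an acyclic cofibration into a fibrant object in $\mc{U}$ by axiom \ref{Uho}, lies in the class); then the weak equivalences $\dgrm{X}(K^{\fib})\to\fibfun\dgrm{X}(K^{\fib})=\dgrm{X}^h(K)$ and its analogue for $\dgrm{Y}$ propagate the homotopy pullback property from the square for $f$ to the square for $p\to p^h$ at $K$. For $(\Rightarrow)$, I fill in the 3D cube whose top face is the square for $f$ and whose bottom face is the corresponding square for $p^h$: the bottom face is a homotopy pullback because $\dgrm{X}^h,\dgrm{Y}^h$ are homotopy functors (so its horizontals are weak equivalences); the two side faces are homotopy pullbacks by hypothesis, applied at $A$ and $B$; the standard pasting lemma for homotopy pullbacks then forces the top face to be a homotopy pullback as well.

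The main obstacle is the adjunction calculation in the first paragraph: untwisting $\Vnat(\cyl(f^*),\dgrm{X})$ and verifying that the induced map to the strict pullback on the target side is the acyclic fibration condition characterizing the homotopy pullback square. The key simplification making this tractable is that $p$ is already a projective fibration between objectwise fibrant objects, so $\dgrm{X}(B)^{\cyl(S)}$ is a genuine path object and the strict fibre products appearing on both sides of $\map(j,p)$ compute the homotopy fibre products; everything else is routine bookkeeping with the Yoneda isomorphism and the enriched pushout-product/cotensor adjunction.
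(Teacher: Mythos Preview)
Your overall strategy matches the paper's: transport the lifting problem against $j\,\square\,i$ across the tensor--cotensor adjunction and identify the resulting condition with the homotopy pullback property of the square on $(A,B)$, then trade this family of squares for the single Bousfield--Friedlander square via the acyclic cofibration $K\to K'$ and the $\mc{V}$-homotopy equivalence $K'\to K^{\fib}$. Your cube argument in the second step is a slightly more symmetric packaging of what the paper does with the factorization $K\to K'\to K^{\fib}$.

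There is, however, a real gap in your first step. You assert ``since $p$ is a projective fibration, every entry is fibrant in $\mc{M}$'' and again ``$p$ is already a projective fibration between objectwise fibrant objects''. This is false: a projective fibration is an \emph{objectwise fibration}, which says nothing about $\dgrm{X}(K)$ or $\dgrm{Y}(K)$ being fibrant. Without objectwise fibrancy the dual of Lemma~\ref{cylinder} does not apply, $\dgrm{Z}(B)^{\cyl(S)}\to\dgrm{Z}(B)\times\dgrm{Z}(B)$ need not be a fibration, and neither your identification $\Vnat(\cyl(f^*),\dgrm{X})\simeq\dgrm{X}(A)$ nor the identification of the target with the homotopy pullback is justified; the ``direct bookkeeping'' collapses at exactly this point.

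The paper handles this by an explicit reduction you omit: using right properness of $\mc{M}^{\mc{U}}$ and the fact that precomposition with $\fibfun$ preserves objectwise fibrations, one reduces the forward implication to the case of an objectwise fibration between objectwise fibrant homotopy functors. In that restricted setting your path-object calculation is valid, and the paper then establishes the comparison square
\[
\xymatrix{ \dgrm{X}^{\cyl(f^*)} \ar[d] & \dgrm{X}(A) \ar[l]_-{\simeq}\ar[d]^{\simeq} \\
       \dgrm{Y}^{\cyl(f^*)}\times_{\dgrm{Y}(B)}\dgrm{X}(B) & \dgrm{Y}(A)\times_{\dgrm{Y}(B)}\dgrm{X}(B) \ar[l]^-{\simeq} }
\]
with the horizontal equivalences coming precisely from fibrancy. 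If you insert this reduction (or, alternatively, argue via the acyclic projective cofibration $R^A\to\cyl(f^*)$ and Theorem~\ref{tensor + projective} to get the needed equivalences relative to $p$ rather than absolutely), your proof goes through.
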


We have not checked the co-solution set condition required for the generalized small object argument of \cite{Chorny:general}. So we do not claim, that the homotopy model structure is class-cofibrantly generated. There is merely a certain class of acyclic cofibrations, that detect fibrations by the lifting property. However if \mc{U} is small, the homotopy model structure is cofibrantly generated, because sources and targets of the generating set are small.

\begin{proof}
We first prove, that a fibration in the homotopy model structure on $\mc{M}^{\mc{U}}$ has the right lifting property with respect to $J^{\rm ho}_{\mc{M}^{\mc{U}}}$. To streamline the argument we use right properness and the fact, that applying $\fibfun$ preserves objectwise fibrations, and reduce to the case of an objectwise $\dgrm{X}\to\dgrm{Y}$ fibration between objectwise fibrant homotopy functors. Obviously such maps have the right lifting property with respect to $J_{\mc{M}^{\mc{U}}}^{\rm proj}$. Given a diagram
\diag{ (\cyl(f^*)\otimes C)\sqcup_{(R^B\otimes C)}(R^B\otimes D) \ar[r]\ar[d]_-{j\,\square\, i} & \dgrm{X} \ar[d] \\ \cyl(f^*)\otimes D \ar[r] & \dgrm{Y} }{lift1}
where $j\in J'$ and $i\in I_{\mc{M}}$. This is adjoint to the diagram:
\diag{ C \ar[r]\ar[d]_{i} & \dgrm{X}^{\cyl(f^*)} \ar[d] \\  D \ar[r] & \dgrm{Y}^{\cyl(f^*)}\times_{\dgrm{Y}(B)}\dgrm{X}(B) }{lift2}
Here $\dgrm{X}^{\cyl(f^*)}$ is the cotensor of $\mc{M}^{\mc{U}}$ over $\mc{V}^{\mc{U}}$ from \ref{M^V over itself}.
Note that the right hand map is an objectwise fibration by the fact \ref{tensor + projective}, that $\mc{M}^{\mc{U}}$ with the projective structure is a $\mc{V}^{\mc{U}}$-model category.
Now one can easily show, that there is a commutative square
\diagr{ \dgrm{X}^{\cyl(f^*)} \ar[d] & \dgrm{X}(A) \ar[l]_-{\simeq}\ar[d]^{\simeq} \\  
       \dgrm{Y}^{\cyl(f^*)}\times_{\dgrm{Y}(B)}\dgrm{X}(B) & \dgrm{Y}(A)\times_{\dgrm{Y}(B)}\dgrm{X}(B) \ar[l]^-{\simeq} }
So a lifting exists in (\ref{lift2}), hence also in the adjoint square (\ref{lift1}).

Conversely let $\dgrm{X}\to\dgrm{Y}$ be a map with the right lifting property with respect to $J^{\rm ho}_{\mc{M}^{\mc{U}}}$. Then it is obviously an objectwise fibration. The previous diagrams show, that there is a weak equivalence
    $$ \dgrm{X}(A)\simeq\dgrm{Y}(A)\times_{\dgrm{Y}(B)}\dgrm{X}(B). $$
We factor the map $K\to K^\fib$ into an acyclic cofibration $K\to K'$ followed by an acyclic fibration $K'\to K^\fib$. This last map is a weak equivalence between fibrant and cofibrant objects, hence it is mapped to a weak equivalence by \dgrm{X} and \dgrm{Y}. We obtain:
    $$ \dgrm{X}(K)\simeq\dgrm{Y}(K)\times_{\dgrm{Y}(K')}\dgrm{X}(K')\simeq\dgrm{Y}(K)\times_{\dgrm{Y}(K^{\fib})}\dgrm{X}(K^{\fib}). $$
So $\dgrm{X}\to\dgrm{Y}$ is a fibration in the homotopy model structure by \ref{F-fibrant}. 
\end{proof}

\section{The $L$-stable model structure}
\label{section:L-stable functors}

Now we will localize this structure further to obtain the $L$-stabilized or $L$-stabilized version. We need further assumptions on \mc{U}. Let us list them here.

\begin{axiom}\label{U1}
In order to gain a little bit more flexibility we will not just consider small functors defined on \mc{V}, but also small functors from a full subcategory $\mc{U}$ of \mc{V}. This full subcategory \mc{U} should satisfy the following assumptions:

\begin{enumerate}
    \item
It is either small or cocomplete.
    \item
All objects in \mc{U} are cofibrant.
    \item
It is closed under the fibrant approximation functor $\fibfun$.
    \item
It contains the unit $S$.
    \item
It is closed under tensoring with $L$ (and $L'$).
\end{enumerate}
\end{axiom}


In order to compare different linearizations we need a slightly more general notion. 

\begin{remark}\label{L-assembly remark}
For a small functor $\dgrm{X}\co\mc{U}\to\mc{M}$ and every object $K$ in \mc{V} there is an assembly map 
\begin{equation}\label{L-assembly map}
    (L\circ\dgrm{X})(K)=\dgrm{X}(K)\otimes L \to\dgrm{X}(K\otimes L)=(\dgrm{X}\circ L)(K), 
\end{equation}
since small functors are \mc{V}-functors, compare (\ref{assembly1}). If $L'\to L$ is a morphism in \mc{V} we have a commutative diagram:
\diag{ \dgrm{X}(K)\otimes L' \ar[r]\ar[d] & \dgrm{X}(K\otimes L') \ar[d] \\
       \dgrm{X}(K)\otimes L \ar[r] & \dgrm{X}(K\otimes L)  }{L-L'-assembly map}
The composed diagonal map is called the $L$-$L'$-assembly map.
\end{remark}

\begin{definition}\label{L-stable def}
A functor $\dgrm{X}\co\mc{U}\to\mc{M}$ is called {\it $L$-$L'$-stable}, if it is a homotopy functor and the adjoint map of (\ref{L-L'-assembly map})
    $$ t_{\dgrm{X},K}\co\dgrm{X}(K)\to (R'\circ\dgrm{X}\circ L)(K)$$
is a weak equivalence for all objects $K$ in \mc{U}. If $L=L'$, (\ref{L-L'-assembly map}) reduces to (\ref{L-assembly map}). In that case such a functor is simply called $L$-stable. This is the case, we are mostly interested in.
\end{definition}

\begin{remark}
For pointed simplicial sets $\mc{V}=\mc{S}_*$ the $S^1$-stable functors are simply the linear functors in the sense of Goodwillie \cite{Goo:calc3}. 
\end{remark}

\begin{definition}\label{reduced}
Let $\ast$ be the terminal object of \mc{V} and by abuse of language also of \mc{M}.
A functor $\dgrm{X}\co\mc{V}\to\mc{M}$ is called {\it reduced}, if $\dgrm{X}(\ast)\simeq\ast$.
\end{definition}

\begin{remark}
Note, that if the category \mc{V} is pointed, in the sense that the initial object is isomorphic to the terminal one $\ast$, then all small functors in $\mc{M}^{\mc{U}}$ are reduced. 
But we point out, that in general \mc{V} does not have to be pointed for the construction of the $L$-stable model structures. 
\end{remark}

Recall, that $(\frei)^h$ is a fibrant replacement functor in the homotopy model structure, see \ref{h}.

\begin{definition}
We define the following functor from $\mc{M}^{\mc{U}}$ to itself:
    $$ P_L^{L'}\dgrm{X}=\hocolim_{n}R'^n\circ\dgrm{X}^h\circ L^n.$$
Here, of course, $R'$ is the right adjoint to $L'$ given by cotensoring with the object $L'$.
For $L=L'$ we will write $P_L$ or, if no confusion can arise, we will drop the reference to $L$ and $L'$ at all.
The functor $P_L^{L'}=P$ is coaugmented. Let $p\co\id\to P_L^{L'}$ denote the coaugmentation. There is a commutative diagram
\diag{ P_L^{L'} \ar[r]\ar[d] & P_L \ar[d] \\
       P_{L'} \ar[r] & P_{L'}^{L} }{L-functoriality1}
of coaugmented functors.
\end{definition}

\begin{remark}\label{about P}
\begin{punkt}
    \item If $\dgrm{X}$ is a homotopy functor, then $P\dgrm{X}$ is a homotopy functor as well. This follows from the assumption, that all objects are cofibrant. The functor $L$ preserves weak equivalences between cofibrant objects. $R'$ and $\dgrm{X}$ preserve weak equivalences, because of the interspersed fibrant replacements.
    \item The functor $P$ commutes with finite homotopy limits and filtered homotopy colimits because of \ref{alpha-commute}.
    \item From the assumption, that $R'$ commutes with filtered homotopy colimits, it follows directly, that we have a natural equivalence $P\circ R'\fibfun\simeq R'\circ P$.
    \item Therefore, for each $\dgrm{X}$ the functor $P\dgrm{X}$ is $L$-stable. 
    \item From \ref{properties of smallness} it follows, that $P\dgrm{X}$ is again small, if $\dgrm{X}$ was. 
\end{punkt}
\end{remark}

\begin{lemma}\label{P}
The functor $P$ satisfies properties {\rm\bf (A.4)}, {\rm\bf (A.5)} and {\rm\bf (A.6)} from \emph{\ref{thm:bousfield-machine}}. 
\end{lemma}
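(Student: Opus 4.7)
I would verify each of the three axioms separately, relying on the closure properties of $P$ collected in Remark \ref{about P} and on the right properness of the homotopy model structure established in Theorem \ref{V + homotopy}.

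For (A.4), Lemma \ref{fib + we} lets me reduce to checking that a map $f\co\dgrm{X}\to\dgrm{Y}$ whose fibrant replacement $f^h\co\dgrm{X}^h\to\dgrm{Y}^h$ is an objectwise weak equivalence between objectwise fibrant homotopy functors is sent by $P$ to an objectwise weak equivalence. Since all objects of $\mc{U}$ are cofibrant, $\dgrm{X}^h\circ L^n$ and $\dgrm{Y}^h\circ L^n$ are again objectwise fibrant homotopy functors and $f^h\circ L^n$ is still an objectwise weak equivalence. The right Quillen functor $R'$ preserves weak equivalences between fibrant objects, so iterating $n$ times and then applying $\hocolim_n$, which preserves objectwise weak equivalences, one obtains that $Pf$ is an objectwise, hence a homotopy, weak equivalence.

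For (A.5) I would handle the two maps separately. The map $\epsilon_{P\dgrm{X}}\co P\dgrm{X}\to PP\dgrm{X}$ is a weak equivalence because, by Remark \ref{about P}(iv), $P\dgrm{X}$ is $L$-stable; $L$-stability means each structure map of the telescope defining $PP\dgrm{X}$ is an objectwise weak equivalence, so the canonical inclusion of the zeroth stage, which is $\epsilon_{P\dgrm{X}}$, is a weak equivalence. For $P\epsilon_\dgrm{X}$ I would use Remark \ref{about P}(ii) and (iii): $P$ commutes with filtered homotopy colimits and there is a natural equivalence $P\circ R'\fibfun\simeq R'\circ P$; iterating the latter gives $P\circ R'^n\fibfun\simeq R'^n\circ P$, so $P$ can be pushed past the telescope to yield $PP\dgrm{X}\simeq\hocolim_n R'^n(P\dgrm{X})^h L^n$. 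A cofinality argument for the diagonal $\mathbb{N}\to\mathbb{N}\times\mathbb{N}$ then identifies the resulting bi-telescope with the single telescope $P\dgrm{X}$, under which $P\epsilon_\dgrm{X}$ corresponds to the inclusion of a cofinal subsystem and is therefore a weak equivalence.

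For (A.6) I would apply $P$ to the given pullback square. Since $p$ is in particular a fibration in the right proper homotopy model structure, the square is already a homotopy pullback, and Remark \ref{about P}(ii) says that $P$ preserves finite homotopy limits, so the image under $P$ is again a homotopy pullback. The hypothesis that $f$ is a $P$-equivalence says that $Pf$ is a weak equivalence, and right properness of the homotopy model structure then forces $PW\to PY$ to be a weak equivalence, which is exactly the conclusion. I expect the bi-telescope manipulation for $P\epsilon_\dgrm{X}$ in (A.5) to be the main technical obstacle: iterating Remark \ref{about P}(iii) and invoking cofinality while keeping the intermediate fibrant replacements $(-)^h$ under control requires some care; by contrast, (A.4) and (A.6) follow fairly directly once the properties listed in Remark \ref{about P} and right properness are in hand.
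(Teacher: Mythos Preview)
Your treatment of \textbf{(A.4)} and \textbf{(A.6)} matches the paper's proof essentially verbatim: $R'$ preserves weak equivalences between fibrant objects, hence so does each term of the telescope, and $P$ commutes with homotopy pullbacks so right properness finishes \textbf{(A.6)}.

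The one genuine divergence is in \textbf{(A.5)}, for the map $P\epsilon_{\dgrm{X}}$. You propose to compute $PP\dgrm{X}$ as a bi-indexed telescope and invoke cofinality of the diagonal $\mathbb{N}\to\mathbb{N}\times\mathbb{N}$. This works, but the paper (following Goodwillie) takes a shorter path that sidesteps the bi-telescope bookkeeping entirely: one observes that the composite
\[
P\dgrm{X}\;\longrightarrow\; P(R'\circ\dgrm{X}^h\circ L)\;\stackrel{\simeq}{\longrightarrow}\; R'\circ P\dgrm{X}\circ L
\]
is a weak equivalence because $P\dgrm{X}$ is $L$-$L'$-stable, and the second arrow is the equivalence of Remark~\ref{about P}(iii); hence the first arrow---which is $P$ applied to a single structure map of the telescope---is an equivalence. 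Iterating and using that $P$ commutes with filtered homotopy colimits shows that $P\dgrm{X}\to PP\dgrm{X}$ is an equivalence. The advantage of the paper's route is precisely that it avoids the issue you flag as the ``main technical obstacle'': there is no need to thread the intermediate $(\frei)^h$ through a double colimit or to verify cofinality, because one never unfolds $PP\dgrm{X}$ explicitly. Your argument is correct but does more work than necessary.
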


\begin{proof}
The fact, that $P$ preserves projective weak equivalences, follows from the fact, that $R'$ preserves weak equivalences between fibrant objects. This is the reason, why a fibrant replacement functor had to be interposed. This proves {\rm\bf (A.4)}. 

The proof of {\bf (A.5)} is taken from \cite{Goo:calc3}.
It is clear, that the map $pP\dgrm{X}$ is a weak equivalence, because $P\dgrm{X}$ is $L$-$L'$-stable and therefore the map 
    $$R'^n\dgrm{X}^hL^n\to R'^{n+1}\dgrm{X}^hL^{n+1}$$
is a weak equivalence. Next we consider:
\begin{align*}
    P\dgrm{X} \to P(R'\circ\dgrm{X}^h\circ L)\toh{\simeq}R'\circ P\dgrm{X}\circ L
\end{align*}
This composition is a weak equivalence, because $P\dgrm{X}$ is $L$-$L'$-stable. Therefore the first map of the previous composition is a weak equivalence. This easily implies, that $Pp\dgrm{X}$ is a weak equivalence proving {\bf (A.5)}. 

Finally we have to prove {\bf (A.6)}. But this follows easily from the fact, that $P$ commutes with homotopy pullbacks by \ref{about P}(ii) and that \mc{M} is right proper. 
\end{proof}
\begin{remark}
It is worth pointing out, that the functor $P$ satisfies the axioms {\bf (A.4)}, {\bf (A.5)} and {\bf (A.6)} even before localizing to the homotopy model structure. This means, that there exists an $L$-stable model structure for non-homotopy functors. We do not know, how to interpret this.
\end{remark}

Now we can apply theorem \ref{thm:bousfield-machine} once again. We obtain the $L$-$L'$-stable model structure on $\mc{M}^{\mc{U}}$. Cofibrations are still the projective ones. Before we summarize our findings in the next theorem, let us define explicitly the model structure and we prove another characterization of $L$-$L'$-stable fibrations. 

\begin{definition}\label{L-stable structure}
A map $\dgrm{X}\to\dgrm{Y}$ is an \emph{$L$-$L'$-stable equivalence} if and only if $P\dgrm{X}\to P\dgrm{Y}$ is a homotopy weak equivalence or, in fact, a projective weak equivalence, since $P\dgrm{X}$ and $P\dgrm{Y}$ are homotopy functors. \emph{$L$-$L'$-stable fibrations} are homotopy fibrations $\dgrm{X}\to\dgrm{Y}$, such that the diagram
\diagr{ \dgrm{X} \ar[r]\ar[d] & P\dgrm{X} \ar[d] \\ \dgrm{Y} \ar[r] & P\dgrm{Y} }
is a homotopy pullback diagram in the homotopy model structure. 
\end{definition}

From now on we will also assume that $L=L'$ for ease on notation and remark, that the proofs are literally the same for the more general $L$-$L'$-stable model structure.
\begin{lemma}\label{L-stable fibrations}
A map $\dgrm{X}\to\dgrm{Y}$ is an $L$-stable fibration if and only if it is a homotopy fibration such that the following diagram
\diagr{ \dgrm{X} \ar[r]\ar[d] & R\dgrm{X}^h L \ar[d] \\
        \dgrm{Y} \ar[r] & R\dgrm{Y}^h L }
is a homotopy pullback square in the homotopy model structure.
\end{lemma}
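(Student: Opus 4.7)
The plan is to exploit the telescope
\[
\dgrm{X}\to R\dgrm{X}^hL\to R^2\dgrm{X}^hL^2\to\cdots\to P\dgrm{X}
\]
defining $P\dgrm{X}=\hocolim_n R^n\dgrm{X}^hL^n$, together with the pasting lemma for homotopy pullback squares in the right proper homotopy model structure \ref{V + homotopy}. Write $T_n\dgrm{X}:=R^n\dgrm{X}^hL^n$ and let $F$ denote the endofunctor $F\dgrm{Z}:=R\dgrm{Z}^hL$ of $\mc{M}^{\mc{U}}$; the maps $T_n\dgrm{X}\to T_{n+1}\dgrm{X}$ are, up to natural weak equivalence coming from the interposed fibrant replacements, obtained by iteratively applying $F$ to the initial map $\dgrm{X}\to T_1\dgrm{X}$.

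For the forward implication, assume the $R\dgrm{X}^hL$-square (with its $\dgrm{Y}$-analogue) is a homotopy pullback. The endofunctor $F$ preserves homotopy pullbacks: the fibrant replacement $(-)^h$ preserves them by right properness, precomposition with $K\mapsto K\otimes L$ merely re-indexes, and cotensoring objectwise with the cofibrant object $L$ is right Quillen on \mc{M}. Inductively, all squares $T_n\dgrm{X}\to T_{n+1}\dgrm{X}$ are homotopy pullbacks, and pasting shows each composite square $\dgrm{X}\to T_n\dgrm{X}$ is a homotopy pullback. Passing to the filtered homotopy colimit and invoking Remark \ref{about P}(ii), which states that $P$ commutes with finite homotopy limits, we conclude that the $P$-square is a homotopy pullback, so $p$ is an $L$-stable fibration in the sense of Definition \ref{L-stable structure}.

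For the converse, assume the $P$-square is a homotopy pullback. By Remark \ref{about P}(iv), $P\dgrm{X}$ and $P\dgrm{Y}$ are $L$-stable, hence the canonical maps $P\dgrm{X}\to R(P\dgrm{X})^hL$ and $P\dgrm{Y}\to R(P\dgrm{Y})^hL$ are weak equivalences. Applying $F$ to the $P$-square produces a homotopy pullback square whose right column is weakly equivalent to $P\dgrm{X}\to P\dgrm{Y}$; therefore the square
\[
R\dgrm{X}^hL\to P\dgrm{X},\qquad R\dgrm{Y}^hL\to P\dgrm{Y}
\]
is itself a homotopy pullback. In the stacked diagram coming from the factorization $\dgrm{X}\to R\dgrm{X}^hL\to P\dgrm{X}$ and its $\dgrm{Y}$-analogue, the outer composite square is a homotopy pullback by hypothesis and the right-hand square is a homotopy pullback by what we just showed. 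The pasting lemma for homotopy pullbacks then forces the left-hand square, which is exactly the $R\dgrm{X}^hL$-square of the statement, to be a homotopy pullback as well.

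The main technical obstacle is the preservation claim that $F=R(-)^hL$ sends homotopy pullbacks to homotopy pullbacks in the homotopy model structure. Its two non-trivial ingredients are already available in the paper: the fibrant replacement $(-)^h=\fibfun\circ(-)\circ\fibfun$ is compatible with homotopy pullbacks by right properness of \mc{M} and of the homotopy structure, and cotensoring with the cofibrant \mc{V}-small object $L$ is a right Quillen functor on \mc{M} and hence preserves homotopy pullbacks objectwise. Everything else is standard use of pasting and of the commutation of filtered homotopy colimits with finite homotopy limits (Remark \ref{alpha-commute}).
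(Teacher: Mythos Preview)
Your argument is correct. Your ``converse'' direction is essentially identical to the paper's proof: apply $F=R(-)^hL$ to the $P$-square, use the $L$-stability of $P\dgrm{X}$ and $P\dgrm{Y}$ to identify the right column with $P\dgrm{X}\to P\dgrm{Y}$, and then invoke the pasting lemma on the factorization $\dgrm{X}\to R\dgrm{X}^hL\to P\dgrm{X}$. The paper packages this into a cube whose front face is the $P$-square and whose back face is $R(-)L$ applied to it, but the content is the same.

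Your ``forward'' direction, by contrast, supplies an argument that the paper's proof does not make explicit. The paper's pasting argument uses that the right-hand square $R\dgrm{X}^hL\to P\dgrm{X}$ over $R\dgrm{Y}^hL\to P\dgrm{Y}$ is a homotopy pullback, but this is deduced from the $P$-square being one; so as written the paper only handles the implication from $L$-stable fibration to the $R(-)^hL$-condition. Your inductive telescope argument---iterating $F$ on the initial square, pasting, and then passing to the filtered homotopy colimit---is the natural way to close this gap, and it is correct. One small remark: the relevant input at the colimit step is Remark~\ref{alpha-commute} (filtered homotopy colimits commute with finite homotopy limits), not Remark~\ref{about P}(ii); you do cite \ref{alpha-commute} at the end, but the earlier citation to \ref{about P}(ii) is at best indirect.
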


\begin{proof}
According to \ref{F-fibrant} an $L$-stable fibration is a map as described in definition \ref{L-stable structure}. We have a diagram
   $$\xy
   \xymatrix"*"@=13pt{ R\dgrm{X}^hL \ar'[d] [dd] \ar[rr] & & RP\dgrm{X}L \ar[dd] \\
                                 & &                \\
                       R\dgrm{Y}^hL \ar'[r] [rr] & &  RP\dgrm{Y}L  }
   \POS(-10,-9)
   \xymatrix@=19pt{ \dgrm{X} \ar[rr]\ar[dd]\ar["*"]  & & P\dgrm{X} \ar[dd]\ar["*"]_{\simeq}  \\
                                 & &                \\
                    \dgrm{Y} \ar[rr]\ar["*"]   & & P\dgrm{Y} \ar["*"]_{\simeq}   }
   \endxy   $$
where the front and the back square are homotopy pullbacks in the homotopy model structure. Since we have weak equivalences on the right hand side due to fact, that $P\dgrm{X}$ and $P\dgrm{Y}$ are $L$-stable, the second square in the following diagram is a homotopy pullback: 
\diagr{ \dgrm{X} \ar[r]\ar[d] & R\dgrm{X}^hL \ar[r]\ar[d] & P\dgrm{X} \ar[d] \\
        \dgrm{Y} \ar[r] & R\dgrm{Y}^hL \ar[r] & P\dgrm{Y} }
The combined square is a homotopy pullback, so is the left hand square.
\end{proof}

\begin{theorem}\label{V + L-stable}
The category $\mc{M}^{\mc{U}}$ equipped with the $L$-stable model structure is a right proper \mc{V}-model category. It is left proper if \mc{M} is so. 
\end{theorem}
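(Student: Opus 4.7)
The plan is to follow the template of Theorem \ref{V + homotopy}, now leveraging the characterization of $L$-stable fibrations from Lemma \ref{L-stable fibrations}. Given a cofibration $i\co K_1\to K_2$ in $\mc{V}$ and an $L$-stable fibration $p\co \dgrm{X}\to \dgrm{Y}$ in $\mc{M}^{\mc{U}}$, the task is to show that the pushout-product
$$ p\boxtimes i\co \dgrm{X}^{K_2}\to\dgrm{Y}^{K_2}\times_{\dgrm{Y}^{K_1}}\dgrm{X}^{K_1} $$
is an $L$-stable fibration, and that it is acyclic if either $i$ or $p$ is. Condition (ii) of Definition \ref{V-model category} is redundant since the unit $S$ is assumed cofibrant.

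First I would observe that $p\boxtimes i$ is a homotopy fibration: this is immediate from Theorem \ref{V + homotopy}, since every $L$-stable fibration is a homotopy fibration and cofibrations coincide in all three structures (projective, homotopy, $L$-stable). It remains, by Lemma \ref{L-stable fibrations}, to show that the square comparing $p\boxtimes i$ with its image under $R(-)^h L$ is a homotopy pullback in the homotopy model structure. The key observation is that cotensoring with any $K\in\mc{V}$ commutes on the nose with $R=(-)^L$ via the symmetric monoidal adjunction identity $(X^L)^K\cong X^{K\otimes L}\cong (X^K)^L$, and commutes up to natural weak equivalence with the fibrant replacement functor $(-)^h$ of Definition \ref{h} on fibrant objects, since $(-)^K$ is a right Quillen functor on the homotopy model structure by Theorem \ref{V + homotopy}. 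Moreover, cotensoring with any $K$ preserves homotopy pullbacks for the same reason. Starting from the hypothesis that
$$\xymatrix{\dgrm{X} \ar[r]\ar[d] & R\dgrm{X}^h L \ar[d] \\ \dgrm{Y} \ar[r] & R\dgrm{Y}^h L}$$
is a homotopy pullback, I would apply the cotensors $(-)^{K_1}$ and $(-)^{K_2}$ to obtain homotopy pullback squares for $\dgrm{X}^{K_j}$ and $\dgrm{Y}^{K_j}$, and then combine these by an elementary diagram chase (using that finite limits commute and that homotopy pullback is preserved under taking fibre products of fibrations between fibrant objects) to produce the required homotopy pullback square for $p\boxtimes i$. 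The acyclicity statements, for $p$ or $i$ acyclic, reduce in the same way to the corresponding statements in the homotopy structure.

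Right properness is immediate from the Bousfield--Friedlander machinery: Theorem \ref{thm:bousfield-machine} applied to the coaugmented endofunctor $P$ (whose axioms were verified in Lemma \ref{P}) always produces a right proper localized model structure. For left properness, when $\mc{M}$ is left proper, the homotopy model structure on $\mc{M}^{\mc{U}}$ is left proper by Theorem \ref{V + homotopy}, and Theorem \ref{thm:bousfield-machine} propagates left properness through the $P$-localization step.

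The main technical obstacle is the compatibility check of cotensoring with $R$ and with the fibrant replacement $(-)^h$, together with the bookkeeping required to express the pushout-product square as a homotopy limit of squares each of which is a homotopy pullback. This step is essentially formal but requires careful handling because $(-)^h=\fibfun\circ(\frei)\circ\fibfun$ is applied both at the source level and at the functor level, and one must verify that the interplay with cotensoring and with $R$ is homotopy-coherent at each stage.
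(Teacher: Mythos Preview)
Your proposal is correct and follows essentially the same route as the paper: both invoke the Bousfield--Friedlander machinery (via Lemma~\ref{P}) for existence and properness, and both verify the $\mc{V}$-compatibility by appealing to the characterization of $L$-stable fibrations in Lemma~\ref{L-stable fibrations} together with the commutation of cotensoring with $R(\frei)L$. The paper records this commutation as the single natural isomorphism $R(\dgrm{X}^K)L\cong(R\dgrm{X}L)^K$ and stops there; your final paragraph overstates the remaining difficulty with $(\frei)^h$, since the homotopy-pullback condition in Lemma~\ref{L-stable fibrations} is insensitive to the particular choice of fibrant replacement in the homotopy model structure, so no delicate coherence check is actually required.
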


\begin{proof}
The existence of the model structure follows from \ref{thm:bousfield-machine} once again using the fact, that the functor $P$ satisfies the necessary axioms by \ref{P}. This also covers the left properness assertion.
It remains to show, that for an $L$-stable fibration $\dgrm{X}\to\dgrm{Y}$  and a cofibration $i\co K_1\to K_2$ in \mc{V} the induced map
    $$ \dgrm{X}^{K_2}\to\dgrm{Y}^{K_2}\times_{\dgrm{Y}^{K_1}}\dgrm{X}^{K_1}$$
is an $L$-stable fibration. But this follows directly from the characterization of $L$-stable fibrations in \ref{L-stable fibrations} and the fact, that there is a natural isomorphism
    $$ R(\dgrm{X}^K)L\cong(R\dgrm{X}L)^K.$$
\end{proof}

General model categorical considerations give the following lemma.  
\begin{lemma}
For each functor $\dgrm{X}$ the functor $P\dgrm{X}$ is $L$-stable. Moreover, if $\dgrm{X}\to\dgrm{Y}$ is a map in $\mc{M}^{\mc{U}}$ to an $L$-stable functor $\dgrm{Y}$, then there exists a zig-zag of maps 
     $$ P\dgrm{X}\stackrel{\simeq}{\twoheadleftarrow}\dgrm{C}\to\dgrm{Y} $$ 
under \dgrm{X}, where the first arrow is an objectwise acyclic fibration.
\end{lemma}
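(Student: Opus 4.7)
The first assertion is essentially Remark~\ref{about P}(iv): since $R$ commutes with the filtered homotopy colimit defining $P$, the assembly map of $P\dgrm{X}$ is already an objectwise weak equivalence, so $P\dgrm{X}$ is $L$-stable.

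For the \emph{moreover} statement my plan is a classical factor-and-lift argument in the $L$-stable model structure of Theorem~\ref{V + L-stable}. I would factor the coaugmentation $p_{\dgrm{X}}\co\dgrm{X}\to P\dgrm{X}$ in the projective structure as a projective cofibration followed by an objectwise acyclic fibration,
\[
\dgrm{X}\hookrightarrow\dgrm{C}\stackrel{\simeq}{\twoheadrightarrow}P\dgrm{X}.
\]
The right-hand arrow is exactly the objectwise acyclic fibration required by the statement. By Lemma~\ref{P} (axiom \textbf{(A.5)}) the coaugmentation $p_{\dgrm{X}}$ is an $L$-stable weak equivalence, and an objectwise acyclic fibration is a fortiori one, so two-out-of-three in the $L$-stable structure promotes the projective cofibration $\dgrm{X}\hookrightarrow\dgrm{C}$ to an $L$-stable acyclic cofibration.

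The second leg of the zig-zag is then produced by lifting $f$. If $\dgrm{Y}$ is not already objectwise fibrant, I first replace it by the objectwise-fibrant homotopy functor $\dgrm{Y}^{h}$ of Definition~\ref{h}, which is still $L$-stable and objectwise weakly equivalent to $\dgrm{Y}$. Lemma~\ref{L-stable fibrations} then identifies the resulting terminal map as an $L$-stable fibration, so a diagonal filler exists in
\diagr{ \dgrm{X}\ar[r]^-{f}\ar[d]_-{\simeq} & \dgrm{Y}\ar[d] \\ \dgrm{C}\ar[r]\ar@{-->}[ur] & \ast }
yielding the desired map $\dgrm{C}\to\dgrm{Y}$. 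Both triangles under $\dgrm{X}$ commute: the one at $P\dgrm{X}$ by construction of the factorization, and the one at $\dgrm{Y}$ by the upper commutativity of the lifting triangle.

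The main technical obstacle is the gap between being $L$-stable and being $L$-stably fibrant, which is the reason one has to pass to $\dgrm{Y}^{h}$ before invoking the lifting axiom. Apart from that the proof is pure Bousfield--Friedlander formalism, which is why the statement is framed as ``general model-categorical''.
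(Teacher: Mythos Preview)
Your approach is exactly what the paper intends: it offers no proof beyond the phrase ``general model categorical considerations'', and the factor-and-lift argument you outline is the standard way to cash that phrase out. The first assertion is indeed just Remark~\ref{about P}(iv), and factoring $p_{\dgrm{X}}$ projectively as a cofibration followed by an objectwise acyclic fibration is the right construction of $\dgrm{C}$.

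There is, however, a genuine slip in your treatment of the lift. You correctly observe that the lifting axiom only applies when $\dgrm{Y}\to\ast$ is an $L$-stable fibration, which requires $\dgrm{Y}$ to be objectwise fibrant, and you propose to remedy this by passing to $\dgrm{Y}^{h}$. But then the diagonal filler lands in $\dgrm{Y}^{h}$, not in $\dgrm{Y}$: the lifting square you draw still has $\dgrm{Y}$ in the upper right, which is inconsistent with having replaced it. There is no map $\dgrm{Y}^{h}\to\dgrm{Y}$ available to get you back. So as written your argument produces
\[
P\dgrm{X}\stackrel{\simeq}{\twoheadleftarrow}\dgrm{C}\to\dgrm{Y}^{h}\stackrel{\simeq}{\leftarrow}\dgrm{Y},
\]
a three-step zig-zag rather than the two-step one displayed in the lemma.

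This is not a flaw in your reasoning so much as an imprecision in the lemma itself that the paper glosses over. The clean statement would take $\dgrm{Y}$ to be $L$-stably fibrant (objectwise fibrant, homotopy functor, $L$-stable); then your lift goes through without any replacement and the argument is complete. Since the paper immediately follows the lemma with ``we leave the precise uniqueness statement of the zig-zag to the reader'', it is clearly treating this result informally, and your identification of the fibrancy obstruction is the correct diagnosis.
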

We leave the precise uniqueness statement of the zig-zag to the reader. 

Now we will describe a certain class of $L$-stably acyclic cofibrations, that detect $L$-stable fibrations by the lifting property. As for the homotopy model structure we do not prove the co-solution set condition for this class and we do not claim, that the $L$-stable model structure is class-cofibrantly generated. Again it will be cofibrantly generated as soon as \mc{U} is small.

Let $\mc{V}(L,R^K)$ denote the cotensor from \ref{V + small} of $L$ with the representable functor $R^K=\mc{V}(K,\frei)$ for $K$ in \mc{U}. Adjoint to the identity of $\mc{V}(L,R^K)$ is the map
   $$ \tau_K\co R^{K\otimes L}\otimes L\cong\mc{V}(L,R^K)\otimes L\to R^K .$$
Note that $\mc{V}(\tau_K,\dgrm{X})=t_{\dgrm{X},K}$, where $t_{\dgrm{X},K}$ is defined in \ref{L-stable def}.
So a functor \dgrm{X} in $\mc{M}^{\mc{U}}$ is $L$-stable exactly if the cotensor \ref{M^V over itself} with this map is a weak equivalence for every $K$. Observe, that $\tau_K$ is a map between projectively cofibrant objects, so we can form the \mc{V}-mapping cylinder \ref{cyl(f)}:
\diagr{ R^{K\otimes L}\otimes L\ar[r]^-{j(K)} & \cyl(\tau_K)\ar[r]^-{p(K)} & R^K }
Here $j(K)$ is a projective cofibration and $p(K)$ is an objectwise equivalence. Let $J'''$ be the class of maps of the form $j(K)\,\square i$, with $j(K)$ as above for all $K$ in \mc{U} and where $i$ runs through the class $I_\mc{M}$. Note, that all maps in $J'''$ are projective cofibrations.

\begin{definition}\label{J^lin}
We define a new class of maps by
\begin{equation*}
    J^{L{\rm -stable}}_{\mc{M}^{\mc{U}}}:= J'''\cup J_{\mc{M}^{\mc{U}}}^{\rm ho}. 
\end{equation*}
\end{definition}

\begin{theorem}
A map in $\mc{M}^{\mc{U}}$ has the right lifting property with respect to the class $J^{L{\rm -stable}}_{\mc{M}^{\mc{U}}}$ if and only if it is a fibration in the $L$-stable model structure.
\end{theorem}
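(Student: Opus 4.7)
The proof I propose parallels Theorem \ref{generators for ho}. Because $J^{\rm ho}_{\mc{M}^{\mc{U}}} \subseteq J^{L\text{-stable}}_{\mc{M}^{\mc{U}}}$, any map with the right lifting property against the latter is automatically a homotopy fibration by Theorem \ref{generators for ho}. In view of Lemma \ref{L-stable fibrations}, the claim reduces to showing that a homotopy fibration $p\co\dgrm{X}\to\dgrm{Y}$ has the right lifting property against the additional class $J'''$ if and only if the square comparing $p$ with $Rp^h L$ is objectwise a homotopy pullback in the homotopy model structure.

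For the forward direction, using right properness together with the observation that $(\frei)^h$ preserves objectwise fibrations, I first reduce to the case where $\dgrm{X}$ and $\dgrm{Y}$ are objectwise fibrant $L$-stable homotopy functors and $p$ is an objectwise fibration. The lifting problem for a map $j(K)\,\square\,i$ with $i\co C\to D$ in $I_{\mc{M}}$ adjoins to the claim that
\begin{equation*}
q\co\dgrm{X}^{\cyl(\tau_K)}\longrightarrow\dgrm{Y}^{\cyl(\tau_K)}\times_{\dgrm{Y}^{R^{K\otimes L}\otimes L}}\dgrm{X}^{R^{K\otimes L}\otimes L}
\end{equation*}
is an acyclic fibration in $\mc{M}$, where the cotensor is taken over $\mc{V}^{\mc{U}}$. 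That $q$ is an objectwise fibration is immediate from $\mc{M}^{\mc{U}}$ being a $\mc{V}^{\mc{U}}$-model category. To show it is a weak equivalence, I invoke enriched Yoneda to identify $\dgrm{X}^{R^K}\cong\dgrm{X}(K)$ and $\dgrm{X}^{R^{K\otimes L}\otimes L}\cong(\dgrm{X}^{R^{K\otimes L}})^L\cong R\dgrm{X}(K\otimes L)$, and likewise for $\dgrm{Y}$. Applying Ken Brown's lemma to the objectwise weak equivalence $p(K)\co\cyl(\tau_K)\to R^K$ between projectively cofibrant objects cotensored into the fibrant $\dgrm{X}$ produces weak equivalences $\dgrm{X}(K)\simeq\dgrm{X}^{R^K}\to\dgrm{X}^{\cyl(\tau_K)}$ and similarly for $\dgrm{Y}$. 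A diagram chase as in Theorem \ref{generators for ho} then reduces the comparison of $q$ to the map
\begin{equation*}
\dgrm{X}(K)\longrightarrow\dgrm{Y}(K)\times_{R\dgrm{Y}(K\otimes L)}R\dgrm{X}(K\otimes L),
\end{equation*}
which is a weak equivalence because $L$-stability of $\dgrm{X}$ and $\dgrm{Y}$ gives $\dgrm{X}(K)\to R\dgrm{X}(K\otimes L)$ and $\dgrm{Y}(K)\to R\dgrm{Y}(K\otimes L)$ as weak equivalences, and the strict pullback is formed along the fibration $Rp(K\otimes L)$.

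Conversely, if $p$ has the right lifting property against $J^{L\text{-stable}}_{\mc{M}^{\mc{U}}}$, then in particular against $J^{\rm ho}_{\mc{M}^{\mc{U}}}$, so $p$ is a homotopy fibration. The same adjunction and Yoneda analysis, run without the fibrancy reduction but with $\dgrm{X}^h$ and $\dgrm{Y}^h$ inserted where appropriate, yields for every $K\in\mc{U}$ a weak equivalence $\dgrm{X}(K)\to\dgrm{Y}(K)\times^h_{R\dgrm{Y}^h(K\otimes L)}R\dgrm{X}^h(K\otimes L)$; this is exactly the pointwise homotopy-pullback condition of Lemma \ref{L-stable fibrations}, so $p$ is an $L$-stable fibration. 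The main obstacle I anticipate is the careful unwinding of the $\mc{V}^{\mc{U}}$-cotensors $\dgrm{X}^{R^{K\otimes L}\otimes L}$ via enriched Yoneda and the verification that cotensoring with $p(K)$ into a fibrant $\dgrm{X}$ is a weak equivalence in the sense of $\mc{M}^{\mc{U}}$; the remaining ingredients are lifting-property manipulations already developed in Theorem \ref{generators for ho}.
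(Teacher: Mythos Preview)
Your proposal is correct and follows precisely the route the paper indicates: the paper's proof reads in full ``The proof is similar to the proof of \ref{generators for ho} and will be omitted,'' and you have carried out that parallel argument in detail, with the correct adjunction manipulations, Yoneda identifications of the cotensors $\dgrm{X}^{R^K}$ and $\dgrm{X}^{R^{K\otimes L}\otimes L}$, and the reduction via Lemma~\ref{L-stable fibrations}.
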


The proof is similar to the proof of \ref{generators for ho} and will be omitted. \\

Now we will prove, that the $L$-stabilization really deserves its name: the action of $L$ on $\mc{M}^{\mc{U}}$ induces an equivalence of the associated homotopy categories.

\begin{definition}\label{L-stable}
Let $\mc{N}$ be a \mc{V}-model category. If the functor $L\co\mc{N}\to\mc{N}$ given by tensoring with $L$ is a \mc{V}-left Quillen equivalence, we call \mc{N} {\it $L$-stable}.
\end{definition}

\begin{theorem}
The action of $L$ on $\mc{M}^{\mc{U}}$ given by the tensor from \emph{\ref{V + small}} is the left adjoint of a \mc{V}-Quillen equivalence with right adjoint given by cotensoring with $L$.
The category $\mc{M}^{\mc{U}}$ equipped with the $L$-stable model structure is $L$-stable.
\end{theorem}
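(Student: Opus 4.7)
The plan is to verify that $\otimes L$ is a left $\mc{V}$-Quillen equivalence with respect to the $L$-stable model structure on $\mc{M}^{\mc{U}}$, so that Definition \ref{L-stable} immediately gives the second assertion. The first assertion, that $\otimes L$ is left $\mc{V}$-Quillen at all, follows cleanly from Theorem \ref{V + L-stable}: since $\mc{M}^{\mc{U}}$ with the $L$-stable structure is a $\mc{V}$-model category and $L$ is cofibrant in $\mc{V}$, the pushout-product axiom applied to the generating cofibration $\emptyset\to L$ of \mc{V} shows that $-\otimes L$ preserves cofibrations and $L$-stable acyclic cofibrations. The right adjoint is the objectwise cotensor $(-)^L$ from \ref{V + small}.

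For the equivalence part, I would check that for every cofibrant $\dgrm{X}\in\mc{M}^{\mc{U}}$ the derived unit $\dgrm{X}\to (\dgrm{Z})^L$ is an $L$-stable equivalence, where $\dgrm{Z}$ is an $L$-stable fibrant replacement of $\dgrm{X}\otimes L$. Since $P$ detects $L$-stable equivalences and is an $L$-stabilization (so $\dgrm{Z}\simeq P(\dgrm{X}\otimes L)$ up to objectwise equivalence), it suffices to show that $P\dgrm{X}\to P\bigl((\dgrm{X}\otimes L)^h\bigr)^L)$ is an objectwise weak equivalence. Using the natural equivalence $P\circ R\fibfun\simeq R\circ P$ from Remark \ref{about P}(iii), this map is identified with $P\dgrm{X}\to R\circ P(\dgrm{X}\otimes L)$, i.e.\ the unit (at the object $P\dgrm{X}$) of the objectwise adjunction $L\dashv R$ on $\mc{M}^{\mc{U}}$ applied after $L$-stabilization.

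To finish I would compute both sides as filtered homotopy colimits using the defining formula $P\dgrm{Z}(K)=\hocolim_n R^n\dgrm{Z}^h(K\otimes L^n)$: the left is $\hocolim_n\mc{V}(L^n,\dgrm{X}^h(K\otimes L^n))$ while the right is, after commuting $R=\mc{V}(L,-)$ past the filtered hocolim (which is allowed since $L$ is \mc{V}-small), equivalent to $\hocolim_n\mc{V}(L^{n+1},\dgrm{X}^h(K\otimes L^n)\otimes L)$. Both towers are interleaved by the adjoints of the assembly maps $\mc{V}(L^n,Z)\otimes L\to\mc{V}(L^n,Z\otimes L)$, and the two composites in each direction reproduce the canonical transition maps defining $P\dgrm{X}$. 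This yields the cofinality needed to conclude that the telescopes are equivalent. The main obstacle is precisely this last step: one must check carefully that the interleaving is natural and compatible with all the fibrant replacements (in \mc{V} and in \mc{M}) hidden in the construction of $P$, and that smallness of $L$ together with right properness of \mc{M} are used exactly where required to commute $R$ past the hocolim up to objectwise weak equivalence.
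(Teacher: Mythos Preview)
Your argument has a gap: verifying only that the derived unit is an $L$-stable equivalence does not by itself yield a Quillen equivalence. You must also check either the derived counit, or that $R=(-)^L$ reflects $L$-stable equivalences between $L$-stably fibrant objects. The latter does hold --- if $\dgrm{Y}_1,\dgrm{Y}_2$ are $L$-stably fibrant and $R\dgrm{Y}_1\to R\dgrm{Y}_2$ is an objectwise equivalence, evaluate at $K\otimes L$ and use $L$-stability of the $\dgrm{Y}_i$ to recover $\dgrm{Y}_1(K)\simeq\dgrm{Y}_2(K)$ --- but you never mention it.

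Your interleaving step is also more delicate than the sketch admits. The right-hand tower you write uses $\dgrm{X}^h(K\otimes L^n)\otimes L$, whereas the honest term coming from $P(\dgrm{X}\otimes L)$ is $(\dgrm{X}\otimes L)^h(K\otimes L^n)$; moving between them requires $-\otimes L$ to preserve the weak equivalence $A\to A^{\fib}$ for $A=\dgrm{X}((K\otimes L^n)^{\fib})$, which is not automatic without cofibrancy of $A$. The interleaving can be arranged, but it takes more than naturality and smallness of $L$.

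The paper takes a shorter and quite different route. It verifies the two-sided adjoint-correspondence criterion directly: given an $L$-stable equivalence $\dgrm{X}\to R\dgrm{Y}$ with $\dgrm{X}$ cofibrant and $\dgrm{Y}$ $L$-stably fibrant, it factors the coaugmentation $\dgrm{X}\to P\dgrm{X}$ as a projective cofibration $\dgrm{X}\inj P'\dgrm{X}$ followed by a projective equivalence, so that $P'\dgrm{X}\to R\dgrm{Y}$ is already an \emph{objectwise} weak equivalence (both sides being $L$-stable), and then concludes via the adjoint $LP'\dgrm{X}\to\dgrm{Y}$ together with the $L$-stable acyclic cofibration $L\dgrm{X}\to LP'\dgrm{X}$. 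No telescopes are unwound; only the formal properties of $P$ from Remark~\ref{about P} are invoked.
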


Again we will denote tensoring with $L$ just by $L\dgrm{X}=\dgrm{X}\otimes L$ and cotensoring with $R\dgrm{X}=\dgrm{X}^L$.

\begin{proof}
It is obvious, that $(L,R)$ form a Quillen pair.
It remains to show, that $(L,R)$ is a Quillen equivalence from $\mc{M}^{\mc{U}}$ to itself. Let $\dgrm{X}\to R\dgrm{Y}$ be an $L$-stable equivalence, where $\dgrm{X}$ is projectively cofibrant and $\dgrm{Y}$ is $L$-stably fibrant.  Let $\dgrm{X}\inj P'\dgrm{X}\toh{\simeq} P\dgrm{X}$ be a factorization of the coaugmentation $p\dgrm{X}$ into a projective cofibration followed by a projective weak equivalence. Since $\dgrm{Y}$ is $L$-stable, it follows easily, that $R\dgrm{Y}$ is $L$-stable. 
Then there is a map $P\dgrm{X}\to R\dgrm{Y}$, that is a projective weak equivalence by our assumption on $\dgrm{Y}$ and that makes the following diagram commutative:
\diagr{ & \dgrm{X} \ar[dl]\ar[d]\ar[r] & \dgrm{Y} \\
        P'\dgrm{X} \ar[r]^\simeq & P\dgrm{X} \ar[ur]_{\simeq} & } 
Since $P'\dgrm{X}$ is projectively cofibrant, the adjoint map $LP'\dgrm{X}\to\dgrm{Y}$ is a projective weak equivalence and hence $LP'\dgrm{X}$ is linear. The map $L\dgrm{X}\to LP'\dgrm{X}$ is an $L$-stable acyclic cofibration, since $\dgrm{X}\to P'\dgrm{X}$ is one and $L$ preserves them. Therefore the induced map $P L\dgrm{X}\to LP'\dgrm{X}$ is a projective weak equivalence. We finally find, that $L\dgrm{X}\to\dgrm{Y}$ is an $L$-stable equivalence. The converse direction is similar.  
\end{proof}

\begin{remark}\label{assembly is L-we}
If \dgrm{X} is an arbitrary functor in $\mc{M}^{\mc{U}}$, then the assembly map
    $$ \dgrm{X}(\frei)\otimes L\to\dgrm{X}(\frei\otimes L) $$
is an $L$-stable equivalence.: its adjoint is an $L$-stable equivalence, since the maps in the homotopy colimit for $P_L$ factor over each other, now the assertion follows from the previous theorem.
\end{remark}

\section{Functoriality}
In this section we study functoriality of our $L$-stabilization or $L$-stabilization. We first consider the construction with \mc{M} as variable. Then we prove independence from the weak homotopy type of $L$. Finally we observe, how the functor categories behave with respect to the source \mc{U}.

The tensor is a functor $\mc{M}\times\mc{V}\to\mc{M}$, that restricts to a functor $\mc{M}\times\mc{U}\to\mc{M}$. This induces a functor $\lambda\co\mc{M}\to\mc{M}^\mc{U}$ given by
\begin{equation}\label{lambda}
      \lambda(M)=M\otimes\id_{\mc{U}}=M\otimes R^S .  
\end{equation}
Here we view the identity functor of \mc{U} as the representable functor associated to the unit $S$ restricted to \mc{U}. There is a right adjoint $\rho\co\mc{M}^{\mc{U}}\to\mc{M}$, which is evaluating at $S$:
\begin{equation}\label{rho}
       \rho(\dgrm{X})=\dgrm{X}(S)\cong\Vnat(R^S,\dgrm{X})
\end{equation}
Remember, that this is well defined, since by axiom \ref{U1}(4)\, \mc{U} contains the unit $S$.
Recall also, that the action of $L$ on $\mc{M}^{\mc{U}}$ is given by the tensor in \ref{V + small} and that we have written:
    $$ L\dgrm{X}=L\circ\dgrm{X}=\dgrm{X}\otimes L$$
Observe, that for an object $K$ in \mc{V} we have
    $$ \lambda(LM)(K)\cong M\otimes L\otimes K,$$
while
    $$ (L\circ\lambda(M))(K)\cong M\otimes K\otimes L\cong(\lambda(M)\circ L)(K).$$
So these functors are naturally isomorphic, but this isomorphism involves the twist map of our symmetric monoidal structure. We have also isomorphisms
    $$ R\rho(\dgrm{X})\cong\dgrm{X}(S)^L\cong\rho(R\dgrm{X}). $$
We summarize:
\begin{lemma}\label{restricted lambda-rho}
For any \mc{U} satisfying \emph{\ref{U1}} the pair $(\lambda,\rho)$ constitutes a morphism 
\begin{equation*}
     \lambda\co\mc{M}\rightleftarrows\mc{M}^{\mc{U}}:\!\rho 
\end{equation*}
in the category $\ulvmod$ of \mc{V}-model categories \emph{\ref{ulvmod}}. 
\end{lemma}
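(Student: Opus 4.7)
My plan is to verify three points: that $(\lambda,\rho)$ forms an adjunction, that it is $\mc{V}$-enriched, and that $\lambda$ is left Quillen with respect to any of the three model structures on $\mc{M}^{\mc{U}}$.

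For the first two points, the key observation is that $R^S(K)=\mc{V}(S,K)\cong K$ naturally in $K$ by the unit axiom for the monoidal structure. Combining this with the $\mc{V}$-tensor--cotensor adjunction in $\mc{M}$, one computes
\[ \Vnat(\lambda(M),\dgrm{X}) \;=\; \int_K\mc{M}(M\otimes K,\dgrm{X}(K)) \;\cong\; \int_K\mc{V}(K,\mc{M}(M,\dgrm{X}(K))) \;\cong\; \mc{M}(M,\dgrm{X}(S)), \]
where the final isomorphism is the enriched Yoneda lemma applied to the representable $R^S$. Taking underlying sets yields the ordinary adjunction $\lambda\dashv\rho$, and the whole identity is the $\mc{V}$-enrichment. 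Compatibility of $\lambda$ and $\rho$ with the $\mc{V}$-action, namely $\lambda(M\otimes V)\cong\lambda(M)\otimes V$ and $\rho(\dgrm{X}^V)\cong(\rho\dgrm{X})^V$ for $V\in\mc{V}$, is then immediate from the formulas in \ref{V + small} and the symmetry of $\otimes$.

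For the Quillen property, I would first treat the projective model structure. For $i\co A\to B$ in $I_{\mc{M}}$, the twist isomorphism in the symmetric monoidal structure identifies $\lambda(i)$ with the map $R^S\otimes i$, which at level $K$ is $\mc{V}(S,K)\otimes i\cong K\otimes i$. This is one of the generating projective cofibrations listed in \ref{V + projective}. Since $\lambda$ is a left adjoint it preserves colimits, so cell complexes in $\mc{M}$ built from $I_{\mc{M}}$ are carried to cell complexes built from generating projective cofibrations; hence $\lambda$ preserves cofibrations. The identical argument applied to $J_{\mc{M}}$ gives preservation of (projective) acyclic cofibrations.

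Finally, the homotopy and $L$-stable structures from \ref{V + homotopy} and \ref{V + L-stable} are left Bousfield localizations of the projective structure, so they share its cofibrations, and any projective acyclic cofibration remains acyclic in the localizations (the class of weak equivalences only enlarges). Thus $\lambda$ is left Quillen for each of the three structures, and $(\lambda,\rho)$ is a morphism in $\ulvmod$ in all three cases. The whole argument is essentially formal; the only step requiring any thought is the enriched-Yoneda computation of Step~1, but once the isomorphism $\mc{V}(S,K)\cong K$ is inserted this unwinds to the unit axioms of the $\mc{V}$-action and presents no real obstacle.
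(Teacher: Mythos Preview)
Your proof is correct. The paper treats this lemma as a summary of the preceding discussion and gives no explicit argument beyond defining $\lambda$ and $\rho$ and noting the Yoneda isomorphism $\rho(\dgrm{X})\cong\Vnat(R^S,\dgrm{X})$; your verification fills in exactly the details one would expect.

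One small remark on efficiency: for the Quillen property it is slightly quicker to argue on the right adjoint. Since $\rho=\Ev_S$ is evaluation at $S\in\mc{U}$, it manifestly preserves objectwise fibrations and objectwise acyclic fibrations, hence is right Quillen for the projective structure. For the homotopy and $L$-stable localizations, acyclic fibrations are unchanged and fibrations between fibrant objects are still objectwise fibrations, so Dugger's criterion \ref{Dugger} applies immediately. Your route through the left adjoint---identifying $\lambda(I_{\mc{M}})$ with the generating cofibrations $R^S\otimes I_{\mc{M}}$ (using $S\in\mc{U}$ from axiom \ref{U1}(4))---is equally valid and has the virtue of making the role of the unit axiom explicit.
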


To study functoriality of our $L$-stabilization 
let $\Phi\co\mc{M}_1\to\mc{M}_2$ be a morphism in the category $\ulvmod$. So $\Phi$ is a left \mc{V}-Quillen functor with right adjoint $\Gamma$. There are induced 2-functors
    $$ \Phi_*\co\mc{M}_1^{\mc{U}}\to\mc{M}_2^{\mc{U}} \text{ and }\Gamma_*\co\mc{M}_2^{\mc{U}}\to\mc{M}_1^{\mc{U}}, $$
defined by postcomposing with $\Phi$ and $\Gamma$.  
Note, that $\Phi_*\dgrm{X}$ is small as long as \dgrm{X} is so, because $\Phi$ as a left \mc{V}-Quillen functor commutes with \mc{V}-left Kan extensions. Postcomposing with $\Gamma$ does not necessarily land in small functors again, therefore we have to assume, that \mc{U} is small to get a satisfactory theory. Postcomposing with a \mc{V}-functor preserves preserves all kinds of composition laws of 1- and 2-morphisms in the category $\ulvmod$, which means, that the associations
    $$ \stab_L^{\mc{U}}\co\mc{M}\mapsto\mc{M}^{\mc{U}} \text{ and } (\Phi,\Gamma)\mapsto(\Phi_*,\Gamma_*)$$
constitute a 2-functor. 



\begin{theorem}\label{M-functoriality}
Let us fix \mc{V} and a small \mc{U} satisfying \emph{\ref{U1}}. Then for each small and cofibrant $L$ we can associate to \mc{M} in \mc{V}{\rm -mod} the category $\mc{M}^{\mc{U}}$ equipped with the $L$-stable model structure. This constitutes a $2$-functor from the category $\ulvmod$ \emph{\ref{ulvmod}} to itself. This $2$-functor preserves Quillen equivalences. 
\end{theorem}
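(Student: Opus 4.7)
The plan is threefold: (a) verify that $(\mc{M}^{\mc{U}},\,L\text{-stable})$ lies in $\ulvmod$; (b) show that postcomposition by $(\Phi,\Gamma)$ yields a $\mc{V}$-Quillen adjunction between the $L$-stable structures, whence $2$-functoriality is automatic; (c) promote this to a Quillen equivalence under the equivalence hypothesis.

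For (a), Theorem \ref{V + L-stable} furnishes the right proper $\mc{V}$-model structure. Smallness of $\mc{U}$ turns the generating classes of Theorems \ref{V + projective}, \ref{generators for ho} and the analogous result for the $L$-stable structure into sets, so the $L$-stable structure is cofibrantly generated. The small object argument then produces a functorial fibrant replacement, which is small in the sense of Definition \ref{small} because all generating (acyclic) cofibrations have source and target defined over the small $\mc{U}$, and because $\fibfun$ on $\mc{M}$ and $\mc{V}$ was itself assumed small. Local presentability of $\mc{M}^{\mc{U}}$ follows from that of $\mc{M}$ combined with smallness of $\mc{U}$, so $\mc{M}^{\mc{U}}$ sits in $\ulvmod$.

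For (b), the $\mc{V}$-adjunction $\Phi_*\dashv\Gamma_*$ follows from the end formula defining $\Vnat$ together with the $\mc{V}$-adjunction $\Phi\dashv\Gamma$, noting that $\Gamma_*\dgrm{Y}$ is automatically a small functor since $\mc{U}$ is small. Cofibrant generation reduces the verification that $\Phi_*$ is left $\mc{V}$-Quillen to the action on generators. Since $\Phi_*$ commutes with small colimits, with tensors by objects of $\mc{V}$, and with pushout products of $\mc{V}$-maps, it sends each generating projective cofibration $R^U\otimes i$ and each element of $J_{\mc{M}_1^{\mc{U}}}^{\rm proj}$, $J''$ and $J'''$ to the analogous construction with $\Phi$ applied to the $\mc{M}_1$-coordinate. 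As $\Phi$ is left $\mc{V}$-Quillen, $\Phi(I_{\mc{M}_1})$ and $\Phi(J_{\mc{M}_1})$ consist of cofibrations and acyclic cofibrations respectively, from which the desired conclusion follows. The $2$-functoriality of $(\Phi,\Gamma)\mapsto(\Phi_*,\Gamma_*)$ is then tautological, since postcomposition preserves horizontal and vertical composition of $\mc{V}$-natural transformations.

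For (c), assume $(\Phi,\Gamma)$ is a Quillen equivalence. A routine objectwise argument, using that weak equivalences and fibrations in the projective structure are defined objectwise, first shows that $(\Phi_*,\Gamma_*)$ is a Quillen equivalence on the projective structures. The key further input is that $\Gamma_*$ sends $L$-stable fibrant objects to $L$-stable fibrant objects: for $\dgrm{Y}$ objectwise fibrant, a homotopy functor and $L$-stable, the composite $\Gamma_*\dgrm{Y}$ is objectwise fibrant because $\Gamma$ preserves fibrations; it is a homotopy functor because $\Gamma$ preserves weak equivalences between fibrant objects and $\dgrm{Y}$ takes values in fibrant objects; and it is $L$-stable because $\Gamma$, as a right $\mc{V}$-Quillen functor, commutes with the cotensor $R$, so that the map $\Gamma_*\dgrm{Y}\to R\Gamma_*\dgrm{Y} L$ coincides with $\Gamma$ applied to the objectwise weak equivalence $\dgrm{Y}\to R\dgrm{Y} L$ between fibrant objects. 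Since $L$-stable fibrant objects are local, in the sense that hom-sets in the $L$-stable homotopy category into an $L$-stable fibrant target agree with those in the projective one, the projective Quillen equivalence now forces, for any cofibrant $\dgrm{X}$, a bijection $[\Phi_*\dgrm{X},\dgrm{Y}]\cong[\dgrm{X},\Gamma_*\dgrm{Y}]$ of hom-sets computed in the $L$-stable homotopy categories, which yields the required Quillen equivalence. I expect the main obstacle to be the careful justification of this descent principle from the projective Quillen equivalence to the $L$-stable one within the Bousfield--Friedlander framework.
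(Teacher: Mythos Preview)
Your treatment of part (b) is correct but differs from the paper's. You verify that $\Phi_*$ is left Quillen by pushing the generating sets $I_{\mc{M}^{\mc{U}}}$, $J^{\rm proj}$, $J''$, $J'''$ through $\Phi$. The paper instead works entirely with the right adjoint and invokes Dugger's criterion (Lemma~\ref{Dugger}): since acyclic fibrations are unchanged under both localizations, and since weak equivalences between fibrant objects in the homotopy and $L$-stable structures are just objectwise weak equivalences, $\Gamma_*$ trivially preserves acyclic fibrations and fibrations between fibrant objects. This is more economical, as it sidesteps the explicit form of the generating acyclic cofibrations altogether.

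Your argument in part (c) contains a genuine gap. The bijection $[\Phi_*\dgrm{X},\dgrm{Y}]\cong[\dgrm{X},\Gamma_*\dgrm{Y}]$ that you assemble is nothing more than the derived adjunction isomorphism, which holds for \emph{any} Quillen pair; it does not by itself yield a Quillen equivalence. What is required is the criterion that $\Phi_*\dgrm{X}\to\dgrm{Y}$ is an $L$-stable equivalence if and only if its adjoint $\dgrm{X}\to\Gamma_*\dgrm{Y}$ is, for cofibrant $\dgrm{X}$ and $L$-stably fibrant $\dgrm{Y}$. Your observation that $\Gamma_*$ preserves $L$-stably fibrant objects is half of what is needed; you should also note that $\Gamma_*$ \emph{reflects} $L$-stable equivalences between $L$-stably fibrant objects, since such equivalences are objectwise and $\Gamma$ (as the right half of a Quillen equivalence) reflects weak equivalences between fibrant objects. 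With preservation and reflection in hand, one can either check the derived unit directly or appeal to a standard descent lemma for left Bousfield localizations. The paper's own argument here is terse and passes through the homotopy structure as an intermediate step, using that $\Phi_*$ commutes with precomposition by $\fibfun$; its final sentence for the $L$-stable case appears to contain a slip (writing $\Phi_*$ where $\Gamma_*$ is surely meant), so you should not feel that you have missed something obvious.
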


\begin{proof}
$2$-functoriality is just clear. 

Obviously, $\Gamma_*$ preserves objectwise weak equivalences and fibrations, so it is a right Quillen functor for the projective model structures on both sides. Now in all the localizations acyclic fibrations do not change, and weak equivalences between fibrant objects are objectwise weak equivalences. So $\Gamma_*$ preserves them, and by lemma \ref{Dugger} it is a right Quillen functor for the homotopy and the $L$-stable model structure. $\Phi_*$ and $\Gamma_*$ are obviously \mc{V}-functors. 

Finally, if $(\Phi,\Gamma)$ form a Quillen equivalence, then $(\Phi_*,\Gamma_*)$ is a Quillen equivalence for the projective model structures. This descends obviously to the homotopy model structures, since $\Phi_*(\dgrm{X}\fibfun)\cong\Phi_*(\dgrm{X})\fibfun$. For the $L$-stable model structures we have to check by \ref{Dugger}, that $\Phi_*$ preserves $L$-stable fibrations between $L$-stable functors. Those are just objectwise fibrations, which $\Phi_*$ indeed preserves. 
\end{proof}

We have used the following lemma due to Dugger \cite{Dugger:replace}. A proof is also found in \cite[Prop. 8.6.4.]{Hir:loc}
\begin{lemma}\label{Dugger}
For a pair $(L,R)$ of adjoint functors the following are equivalent:
\begin{punkt}
    \item $(L,R)$ is a Quillen pair.
    \item The left adjoint $L$ preserves cofibrations between cofibrant objects and all acyclic cofibrations.
    \item The right adjoint $R$ preserves fibrations between all fibrant objects and all acyclic fibrations.
\end{punkt} 
\end{lemma}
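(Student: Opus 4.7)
The plan is to establish the two equivalences (i)$\Leftrightarrow$(ii) and (i)$\Leftrightarrow$(iii); the second follows from the first by duality through the adjunction, so I focus on (i)$\Leftrightarrow$(ii).

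The implications (i)$\Rightarrow$(ii) and (i)$\Rightarrow$(iii) are immediate unpackings of the definition of a Quillen pair. By definition, a left Quillen functor preserves every cofibration and every acyclic cofibration, so in particular it preserves cofibrations between cofibrant objects and all acyclic cofibrations. Dually, the right adjoint $R$ preserves all fibrations and all acyclic fibrations, which contains the restricted conditions of (iii).

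For (ii)$\Rightarrow$(i) the only nontrivial point is to upgrade preservation of cofibrations between cofibrant objects to preservation of arbitrary cofibrations, since preservation of all acyclic cofibrations is already hypothesized. Given a cofibration $f\co A\to B$, I would first take a cofibrant replacement $q\co QA\surj A$ by an acyclic fibration from a cofibrant object, and factor the composite $fq\co QA\to B$ as a cofibration $j\co QA\to C$ between cofibrant objects followed by an acyclic fibration $C\surj B$. By hypothesis, $Lj$ is then a cofibration. Since $L$ is a left adjoint it preserves pushouts, so forming the pushout of $q$ along $j$ produces a factorization $A\to P\to B$ of $f$ in which $A\to P$ is a pushout of $j$ and $P\to B$ is a weak equivalence by left properness of \mc{M}. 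Applying $L$ exhibits $LA\to LP$ as a pushout of $Lj$, hence a cofibration; combining this with the hypothesis that $L$ preserves all acyclic cofibrations allows one to conclude that $Lf$ itself is a cofibration.

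The above is essentially the proof of Dugger's classical lemma \cite{Dugger:replace}, to which the present statement reduces, and a complete treatment appears in \cite[Prop.~8.5.4]{Hir:loc}. The implication (iii)$\Rightarrow$(i) follows by the formally dual argument applied to the right adjoint $R$, using fibrant replacements and pullback-factorizations in place of cofibrant replacements and pushouts; here the adjunction identifies preservation by $L$ of acyclic cofibrations with preservation by $R$ of fibrations, closing the duality. The main obstacle is the bookkeeping in the pushout step, and in particular verifying that the auxiliary weak equivalence $P\to B$ interacts well with $L$; this is precisely the technical content packaged by Dugger and rests on left properness together with the standard factorization and lifting axioms.
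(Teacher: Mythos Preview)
The paper does not give its own proof of this lemma; it simply attributes the result to Dugger \cite{Dugger:replace} and points to Hirschhorn for a complete argument. So there is no in-paper proof to compare against, and your deferral to the same sources is in line with what the paper does.

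That said, your sketch has a real defect. You invoke left properness of the source model category to conclude that $P\to B$ is a weak equivalence (this is needed so that the cobase change $C\to P$ of the acyclic fibration $q\co QA\to A$ along the cofibration $j$ is a weak equivalence), and you then assert that the result ``rests on left properness together with the standard factorization and lifting axioms''. This is not so: the lemma as stated carries no properness hypothesis, and neither Dugger's original statement nor Hirschhorn's proposition assume any. Your argument therefore establishes only a weaker version of the lemma. In the paper's applications this happens not to matter, since properness is ambient, but as a proof of the lemma \emph{as stated} it is incomplete.

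A second, smaller gap: the sentence ``combining this with the hypothesis that $L$ preserves all acyclic cofibrations allows one to conclude that $Lf$ itself is a cofibration'' hides a nontrivial step. From $L(A\to P)$ being a cofibration and $P\to B$ being a weak equivalence you cannot directly conclude that $Lf$ is a cofibration; $L$ need not preserve arbitrary weak equivalences. What is needed is to factor $P\to B$ as an acyclic cofibration $P\to P'$ followed by an acyclic fibration $P'\to B$, observe that $A\to P'$ is then a cofibration, and lift $f$ against $P'\to B$ to exhibit $f$ as a retract of $A\to P'$. Since $L(A\to P')$ is a cofibration (a composite of the cofibration $L(A\to P)$ with the acyclic cofibration $L(P\to P')$, both preserved by hypothesis), $Lf$ is a retract of a cofibration, hence a cofibration. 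Without this retract step the argument does not close.
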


\begin{remark}
For a morphism $\Phi\co\mc{M}\to\mc{N}$ in \mc{V}-mod and an object $\dgrm{X}$ in $\mc{M}^{\mc{U}}$ there are natural transformations
    $$ \Phi_*(\dgrm{X}(\frei))\otimes L\to\Phi_*(\dgrm{X}(\frei)\otimes L)\to\Phi_*(\dgrm{X}(\frei\otimes L)), $$
that are $L$-stable equivalences in $\mc{N}^{\mc{U}}$. The proof is straightforward, if one uses the twist map $L_1\otimes L_2\to L_2\otimes L_1$ correctly. 
\end{remark}

Next we look at $L$ as a variable.
The following theorem is the only reason we introduced the $L$-$L'$-stable model structure for different $L$ and $L'$.
\begin{theorem}\label{L-functoriality2}
If the map $L'\to L$ is a weak equivalence, the induced maps of coaugmented linearizations given by the identity functor on $\mc{M}^{\mc{U}}$ corresponding to the square \emph{\ref{L-functoriality1}} are Quillen equivalences.
In particular we obtain, that the Quillen equivalence class of $\mc{M}^{\mc{U}}$ with the $L$-stable model structure only depends on the weak homotopy type of $L$.
\end{theorem}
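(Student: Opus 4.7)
The plan is to exploit that all four corners of the square \ref{L-functoriality1} are Bousfield-Friedlander localizations of the same homotopy model structure on $\mc{M}^{\mc{U}}$ (by \ref{thm:bousfield-machine}) and share the same projective cofibrations. For any two such localizations, the identity is a Quillen adjunction and becomes a Quillen equivalence exactly when the two classes of weak equivalences agree. Since at each corner the weak equivalences are detected by the corresponding coaugmented functor $P_L^{L'}$, $P_L$, $P_{L'}^L$, or $P_{L'}$, it suffices to exhibit a natural zigzag of objectwise weak equivalences between these four endofunctors, compatible with the maps in the square.

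The two key ingredients are standard consequences of $L' \to L$ being a weak equivalence between cofibrant objects: (a) for every cofibrant $K$ in $\mc{U}$ the map $K \otimes L' \to K \otimes L$ is a weak equivalence between cofibrants, by Ken Brown applied to the left Quillen functor $K \otimes (-)$; and (b) for every fibrant $M$ in $\mc{M}$ the induced map $M^L \to M^{L'}$ is a weak equivalence, obtained by factoring $L' \to L$ as an acyclic cofibration followed by an acyclic fibration and applying the dual pullback-product form of \ref{V-model category}(i) to each factor. Closure of $\mc{U}$ under $L$ and $L'$ from axiom \ref{U1} ensures that $L^n$ and $L'^n$ remain in $\mc{U}$ and cofibrant, while the interposed fibrant replacement in $\dgrm{X}^h$ (together with the fact that the right Quillen functors $R$, $R'$ preserve fibrant objects) makes (b) applicable at every stage of the iteration.

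Applying (a) and (b) levelwise then gives, for each homotopy functor $\dgrm{X}$, a functorial zigzag
$$R^n\circ\dgrm{X}^h\circ L^n \longrightarrow R'^n\circ\dgrm{X}^h\circ L^n \longleftarrow R'^n\circ\dgrm{X}^h\circ L'^n$$
of objectwise weak equivalences connecting all four combinations of $R$ or $R'$ with $L$ or $L'$. Passing to the sequential homotopy colimit yields natural weak equivalences between the four $P$-functors, and one readily checks compatibility with the arrows of \ref{L-functoriality1}. The ``in particular'' assertion then follows by choosing a zigzag $L = L_0 \leftarrow L_1 \to \cdots \to L_k = L'$ of weak equivalences between cofibrant objects (replacing intermediate terms by cofibrant models and enlarging $\mc{U}$ if necessary so that axiom \ref{U1} persists) and iterating the first part to obtain a chain of identity Quillen equivalences. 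The main technical hurdle is the careful cofibrancy-fibrancy bookkeeping required to apply (a) and (b) at every stage of the iterated compositions $R^n \circ \dgrm{X}^h \circ L^n$ before passing to the homotopy colimit; once that is in place, the rest is formal.
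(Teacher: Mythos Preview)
Your argument is correct and is essentially a fully fleshed-out version of what the paper leaves entirely to the reader: the paper's own proof consists of the single sentence ``This is obvious.'' Your strategy---showing that the endofunctors $P_L$, $P_{L'}$, and $P_L^{L'}$ are connected by a natural zigzag of objectwise weak equivalences, so that all the localizations have the same class of weak equivalences and hence coincide as model structures---is exactly the intended content, and your verification of the cofibrancy/fibrancy hypotheses needed to apply Ken Brown at each stage is sound.

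One small caveat on the ``in particular'' clause: your parenthetical about ``enlarging $\mc{U}$ if necessary'' to accommodate a longer zigzag of $L$'s is not quite legitimate, since changing $\mc{U}$ changes the functor category $\mc{M}^{\mc{U}}$ itself, and axiom \ref{U1}(5) already fixes which $L$'s are admissible for a given $\mc{U}$. The paper's statement is really only asserting the conclusion for a direct weak equivalence $L'\to L$ between objects both satisfying the standing hypotheses (including \ref{U1}(5) for $\mc{U}$), and the paper is just as informal as you are about the general zigzag case. This does not affect the main argument.
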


\begin{proof}
This is obvious.
\end{proof}

Now let $i\co\mc{U}_1\inj\mc{U}_2$ be a full inclusion of full subcategories of \mc{V}.
We observed in \ref{LKanii}, that we have an adjoint pair
\begin{equation}\label{LKan adjunction} 
    \LKan_i\co\mc{M}^{\mc{U}_1}\rightleftarrows\mc{M}^{\mc{U}_2}:\!i^*,
\end{equation}
where $i^*$ is the restriction functor and $\LKan_i$ is the \mc{V}-left Kan extension along $i$.

\begin{lemma}\label{LKan left Quillen}
The pair from \emph{(\ref{LKan adjunction})} is a \mc{V}-Quillen pair for the projective, the homotopy and the $L$-stable model structures.
\end{lemma}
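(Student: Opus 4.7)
The plan is to apply Dugger's criterion \ref{Dugger}(iii) in each of the three model structures to the right adjoint $i^*$, exploiting the fact that restriction to a full subcategory tautologically preserves all objectwise properties. In each case I check that $i^*$ preserves acyclic fibrations and fibrations between fibrant objects; the $\mc{V}$-enrichment of both $i^*$ and $\LKan_i$ is immediate, and the left $\mc{V}$-Quillen condition on $\LKan_i$ then follows automatically.

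For the projective model structure the verification is trivial: fibrations and weak equivalences are defined objectwise, so $i^*$ preserves both. For the homotopy structure, acyclic fibrations coincide with projective acyclic fibrations (localization leaves trivial fibrations unchanged), so the projective case handles them. A fibration between homotopy-fibrant objects reduces, by \ref{F-fibrant}, to an objectwise fibration between two objectwise fibrant homotopy functors: when source and target are homotopy-fibrant, the comparison maps $\dgrm{X}\to\dgrm{X}^h$ and $\dgrm{Y}\to\dgrm{Y}^h$ are objectwise weak equivalences, making the homotopy pullback square automatic. Restriction clearly preserves objectwise fibrations, and being a homotopy functor is a pointwise condition, so a homotopy functor on $\mc{U}_2$ restricts to one on $\mc{U}_1$. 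Axiom \ref{U1}(3) further ensures that $\fibfun$ commutes with $i^*$, keeping the fibrant replacement of \ref{h} consistent under restriction.

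For the $L$-stable structure the argument is parallel, using \ref{L-stable fibrations} in place of \ref{F-fibrant}: $L$-stable fibrations between $L$-stably fibrant objects again reduce to objectwise fibrations between objectwise fibrant $L$-stable homotopy functors, and acyclic $L$-stable fibrations coincide with projective ones. The only additional ingredient is that restriction preserves $L$-stability, which is where axiom \ref{U1}(5) enters: for $K\in\mc{U}_1$ the object $K\otimes L$ still lies in $\mc{U}_1\subset\mc{U}_2$, so the requirement that $t_{\dgrm{X},K}\co\dgrm{X}(K)\to R\dgrm{X}(K\otimes L)$ be a weak equivalence for all $K\in\mc{U}_2$ restricts cleanly to the analogous statement on $\mc{U}_1$.

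I do not expect any real obstacle: axioms \ref{U1}(3) and \ref{U1}(5) were designed precisely so that full-subcategory inclusions respect the defining structural properties of fibrant objects in each successive localization. The only subtle point is the use of the fibrations-between-fibrants characterizations \ref{F-fibrant} and \ref{L-stable fibrations}, which is what lets Dugger's lemma do the work without needing an explicit description of arbitrary fibrations in the localized structures.
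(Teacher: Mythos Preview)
Your argument is correct and shares the paper's core observation---that the fibrancy conditions in all three model structures are objectwise---but you route through Dugger's criterion \ref{Dugger}(iii) where the paper does not. The paper's one-line proof simply notes that the full characterizations of fibrations in \ref{F-fibrant} and \ref{L-stable fibrations} (for \emph{arbitrary} maps, not just those between fibrant objects) are already objectwise conditions: an objectwise fibration together with an objectwise homotopy pullback square involving precomposition with $\fibfun$ or $L$, both of which stay inside $\mc{U}_1$ by axioms \ref{U1}(3) and (5). Hence $i^*$ preserves all fibrations and all acyclic fibrations directly, and the reduction to fibrations between fibrant objects is unnecessary. What your version buys is an explicit accounting of where axioms \ref{U1}(3) and (5) enter, which the paper leaves tacit.
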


\begin{proof}
We just have to observe, that the the fibrancy conditions in all the model structure are given objectwise, so $i^*$ preserves them, and hence it is a right Quillen functor.
\end{proof}

Observe, that under certain circumstances $\LKan_i$ preserves weak equivalences. Compare \ref{LKan preserves we} and \ref{LKan preserves L-we}. The question, when this adjoint functor pair is a Quillen equivalence, will be briefly addressed at the end of section \ref{section:L-stabilization}.

\section{$L$-stabilization}
\label{section:L-stabilization}

We will now define our $L$-stabilization or $L$-stabilization of a \mc{V}-model category \mc{M} from $\ulvmod$ with respect to $L$, prove an idempotency result \ref{S-eval}. This is connected with $L$-spectra and we will examine this more closely in section \ref{section:L-spectra}.

Before we prove theorem \ref{S-eval}, let us observe a few things.
Assume that $(L,R)$ is already a Quillen equivalence.
Then for all cofibrant $M$ and fibrant $N$ in \mc{V} $LM=M\otimes L\to N$ is a weak equivalence if and only if $M\to RN=N^L$ is a weak equivalence. The maps $LK\to L(K^\fib)$ and $LK\to(LK)^\fib$ are also acyclic cofibrations. It follows, that the functorial maps
    $$ K^\fib\to RL(K^\fib)\to R(L(K^\fib)^\fib) $$
are weak equivalences. We can rephrase this by saying that the projectively fibrant replacement $\fibfun$ of the identity functor of \mc{V} is $L$-stable. In fact, the identity itself is $L$-stable.

\begin{definition}\label{L}
Consider the set of objects $\{L^nS\,|\,n\in\mathbbm{N}\}$ and let $\vsph_L$ be the full subcategory of \mc{M} obtained from this set and its image under the functor $\fibfun$. Here we obviously set $L^0S=S$. For a \mc{V}-model category \mc{M} its {\it $L$-stabilization} is the category $\mc{M}^{\vsph_L}$ equipped with the $L$-stable model structure.
\end{definition}

\begin{remark}
\begin{punkt}
   \item
The category $\vsph_L$ is the smallest allowable choice for \mc{U} according to the axiom \ref{U1}.  At the end of this section we will address briefly the question, whether one can use larger categories.
   \item
If \dgrm{X} is an $L$-stable functor, then the behavior of \dgrm{X} on $\vsph_L$ is determined up to weak equivalence by its value on $S$, because the maps
\begin{equation}\label{determined by S}
  L\dgrm{X}(L^nS)\to(\dgrm{X}(L^{n+1}S)^\fib)^\fib\leftarrow \dgrm{X}(L^{n+1}S)
\end{equation}
are weak equivalences. 
\end{punkt}
\end{remark}

\begin{theorem}\label{S-eval}
Let \mc{M} be $L$-stable, see \emph{\ref{L-stable}}, i.e. $(L,R)$ is a Quillen equivalence on \mc{M}. Then we have:
\begin{punkt}
   \item
Let $\dgrm{X}\co\mc{U}\to\mc{M}$ be a linearly fibrant functor. If $\lambda(M)\to\dgrm{X}$ is an $L$-equivalence, then $M\to\dgrm{X}(S)$ is a weak equivalence.
   \item
The pair $(\lambda,\rho)$ from \emph{\ref{restricted lambda-rho}}, where $\mc{U}=\vsph_L$, is a \mc{V}-Quillen equivalence.
\end{punkt}
\end{theorem}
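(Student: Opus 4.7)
My plan is to establish (i) by applying the $L$-stabilization $P=P_L$ to the given $L$-equivalence, evaluating at the unit $S$, and recognizing both sides; then to deduce (ii) from (i) using a standard characterization of Quillen equivalences.

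For part (i), first I would apply $P$ to the $L$-equivalence $\lambda(M)\to\dgrm{X}$, yielding an objectwise weak equivalence $P\lambda(M)\to P\dgrm{X}$ by the definition of $L$-stable equivalence in \ref{L-stable structure}. Since $\dgrm{X}$ is $L$-stably fibrant, it is in particular an $L$-stable homotopy functor, so the coaugmentation $\dgrm{X}\to P\dgrm{X}$ is an objectwise weak equivalence by \ref{about P}. The crux is then to identify $P\lambda(M)(S)\simeq M$. Unfolding definitions,
\[ P\lambda(M)(S)=\hocolim_{n}R^n\bigl(M\otimes (L^n)^{\fib}\bigr)^{\fib}. \]
The assumption that $(L,R)$ is a Quillen equivalence on $\mc{M}$, spelled out in the discussion preceding \ref{L}, makes the iterated derived unit $M\to R^n(L^n M)^{\fib}$ a weak equivalence for every $n$. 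Consequently each structure map in the sequential diagram is a weak equivalence, the $n=0$ term is $M^{\fib}\simeq M$, and the $\hocolim$ is equivalent to $M$ (after a cofibrant replacement of $M$ if needed). Evaluating the objectwise weak equivalence $P\lambda(M)\to P\dgrm{X}$ at $S$ and stringing together the three equivalences then gives the desired weak equivalence $M\to\dgrm{X}(S)$.

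For part (ii), the pair $(\lambda,\rho)$ is already a $\mc{V}$-Quillen adjunction for the $L$-stable structure by \ref{restricted lambda-rho}, combined with \ref{Dugger} and the characterization of $L$-stable fibrations in \ref{L-stable fibrations}. To upgrade this to a $\mc{V}$-Quillen equivalence I would use the standard criterion: derived unit is a weak equivalence on cofibrants, and $\rho$ reflects weak equivalences between fibrants. The derived unit at a cofibrant $M\in\mc{M}$ is the map from $M$ to the value at $S$ of an $L$-stably fibrant replacement $\lambda(M)\to\dgrm{X}_M$; this is precisely the map part~(i) shows to be a weak equivalence. For the reflection property, I would take $f\co\dgrm{X}\to\dgrm{Y}$ between $L$-stably fibrant functors with $\dgrm{X}(S)\to\dgrm{Y}(S)$ a weak equivalence in $\mc{M}$. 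By \ref{fib + we} it suffices to check $f$ is an objectwise weak equivalence on $\vsph_L$. Iterating the $L$-stability of $\dgrm{X}$ via \ref{determined by S} and using the Quillen equivalence $(L,R)$ on $\mc{M}$ gives $\dgrm{X}(L^kS)\simeq L^k\dgrm{X}(S)^{\cof}$, and similarly for $\dgrm{Y}$; the weak equivalence $\dgrm{X}(S)\to\dgrm{Y}(S)$ between fibrants then produces weak equivalences on each $L^kS$, and the remaining objects $(L^kS)^{\fib}$ of $\vsph_L$ are handled by the homotopy-functor property of $\dgrm{X}$ and $\dgrm{Y}$.

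The hard part will be the delicate bookkeeping of cofibrant and fibrant replacements throughout. The collapse of $\hocolim_n R^n(M\otimes L^n)^{\fib}$ to $M$ in part~(i) depends on $M$ being cofibrant and on the fibrant replacements interacting cleanly with $L$ and $R$, so for arbitrary $M$ one must first reduce to the cofibrant case. Likewise, identifying $\dgrm{X}(L^kS)$ with $L^k\dgrm{X}(S)^{\cof}$ in part~(ii) requires cofibrantly replacing $\dgrm{X}(S)$ before applying the left Quillen functor $L$, and deriving the adjunction units/counits correctly under the iteration. Apart from these technical checks, the argument is simply the clean combination of the $P$-construction with the hypothesis that $L$ already acts as a Quillen equivalence on $\mc{M}$.
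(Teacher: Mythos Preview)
Your proposal is correct and follows essentially the same route as the paper. The only difference in framing is that for part~(i) the paper observes the slightly stronger fact that $\lambda(M)$ is already an $L$-stable homotopy functor for every cofibrant $M$ (so $\lambda(M)\to P\lambda(M)$ is an objectwise weak equivalence), whereas you compute $P\lambda(M)(S)\simeq M$ directly; both rest on the same iterated-derived-unit computation using the Quillen equivalence $(L,R)$. For part~(ii) the paper is terse and simply invokes~(\ref{determined by S}), while you spell out the derived-unit-plus-reflection criterion; your more explicit treatment of the cofibrant/fibrant bookkeeping is a fair expansion of what the paper leaves implicit.
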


\begin{proof}
Let $(L,R)$ be a Quillen equivalence.
The functor $\lambda(M)$ is a projectively cofibrant $L$-stable homotopy functor. Indeed, the map
    $$ K\otimes M\otimes L\to K\otimes L\otimes M\to (K\otimes L)^\fib\otimes M\to((K\otimes L)^\fib\otimes M)^\fib $$
is a weak equivalence for every $K$ in \mc{V}. Remember that all objects in \mc{U} are cofibrant. So the adjoint map $K^\fib\to R(LK)^\fib$ is a weak equivalence. So for every cofibrant $M$, every $L$-stably fibrant $\dgrm{X}$ and every map $\lambda(M)\to\dgrm{X}$ there is a commuting diagram
\diagr{\lambda(M) \ar[r]\ar[d] & \dgrm{X} \ar[d] \\ P\lambda(M) \ar[r] & P\dgrm{X} }
consisting entirely of objectwise weak equivalences if and only if the map $\lambda(M)\to\dgrm{X}$ is an $L$-stable equivalence. So this is equivalent to $M\to\dgrm{X}(S)$ being a weak equivalence. This proves (i) and the first part of (ii). The second part of (ii) is obvious by (\ref{determined by S}).
\end{proof}

We saw, that $\vsph_L$ is the smallest choice for \mc{U}. The question, whether we can choose larger source categories, for which the previous statements are still true, is an interesting one. An indication, what we can hope for, is given by the assumptions \ref{U3}, we have to make for the monoid axiom.
The largest choice for \mc{U} in this setting is the full subcategory of \mc{V} given by the cofibrant \mc{V}-finitely presentable objects. Let us call this choice $\mc{U}_{{\rm max}}$. In the case of $\mc{V}=\mc{S}_*$ and $L=S^1$ the maximal choice is the category of finite pointed simplicial sets $\mc{S}_*^{{\rm fin}}$ and it follows from \cite{Lydakis}, that restriction and left Kan extension give a Quillen equivalence
    $$ \mc{S}_*^{\mc{S}_*^{{\rm fin}}}\leftrightarrows\mc{S}_*^{\vsph_{S^1}} .$$
This is just another formulation of the well-known fact, that a generalized homology theory is completely determined by its values on the spheres.
It is not always true, that $\mc{U}_{{\rm max}}$ and $\vsph_L$ yield Quillen equivalent functor categories in general. For a counter example see \cite[p. 465]{DRO:enriched} with $\mc{V}=\mc{S}_*$ and $L=S^0\sqcup S^0$. So we would like to pose the following question.

\begin{question}
Given \mc{V} and $L$.
What is the largest full subcategory \mc{U} of \mc{V}, such that the pair
    $$ \LKan_i\co\mc{M}^{\vsph_L}\rightleftarrows\mc{M}^{\mc{U}}:\! i^*$$
is a \mc{V}-Quillen equivalence, where $i\co\vsph_L\to\mc{U}$ is a full inclusion and where both sides carry the $L$-stable model structure.
\end{question}

The answer to this question for the motivic case with $L=\mathbbm{P}^1$ is open and is likely to involve Voevodsky's slice filtration.

\section{Symmetric monoidal structures}
\label{section:sym. mon. str.}

We are now going to describe, how to enrich the category $\mc{M}^{\mc{U}}$ over $\mc{V}^{\mc{U}}$. In particular, for the case $\mc{M}=\mc{V}$ the category $\mc{V}^{\mc{U}}$ will become again a closed symmetric monoidal category. This enrichment is a straightforward generalization of the smash product constructed by Lydakis in \cite{Lydakis} for the case $\mc{V}=\mc{S}_*$ of pointed simplicial sets and $\mc{U}=\mc{S}_*^{\text{fin}}$ of finite pointed simplicial sets. 

\begin{axiom}\label{U2}
Additionally to \ref{U1} we now have to make the following assumption.
\begin{enumerate}\setrefstep{5}
    \item
\mc{U} is closed under the symmetric monoidal product $\otimes$.
\end{enumerate}
\end{axiom}

First we define a new category $\mc{U}\otimes\mc{U}$:
\begin{align*}
    {\rm Ob}(\mc{U}\otimes\mc{U})&={\rm Ob}(\mc{U})\times{\rm Ob}(\mc{U}) \\
    {\rm Mor}(\mc{U}\otimes\mc{U})((A,B),(C,D))&=\mc{V}(A,C)\otimes\mc{V}(B,D) 
\end{align*}
Given two functors $\dgrm{U}\co\mc{U}\to\mc{V}$ and $\dgrm{X}\co\mc{U}\to\mc{M}$ there is a functor
   $$ \dgrm{X}\varodot\dgrm{U}\co\mc{U}\otimes\mc{U}\to\mc{M} $$
defined by
\begin{equation}\label{varodot} 
   (\dgrm{X}\varodot\dgrm{U})(A,B)=\dgrm{X}(A)\otimes\dgrm{U}(B) .
\end{equation}
Further the monoidal product $\otimes\co\mc{U}\times\mc{U}\to\mc{U}$ factors over $\mc{U}\otimes\mc{U}$ to give a functor
   $$ \otimes\co\mc{U}\otimes\mc{U}\to\mc{U}, (A,B)\mapsto A\otimes B. $$
The {\it tensor product} of \dgrm{U} and \dgrm{X} is then defined as the left Kan extension of $\dgrm{X}\varodot\dgrm{U}$ along $\otimes$:
\diagr{ \mc{U}\otimes\mc{U} \ar[r]^-{\dgrm{X}\varodot\dgrm{U}}\ar[d]_{\otimes} & \mc{M}  \\
        \mc{U} \ar@{.>}[ur]_-{\dgrm{X}\otimes\dgrm{U}} }
Observe first, that this Kan extension exists for representable functors $\dgrm{U}=R^A\otimes M$ and $\dgrm{X}=R^B$ for objects $A,B$ in \mc{U} and $M$ in \mc{M}. There it is given simply by
   $$ (R^A\otimes M)\otimes R^B=R^{A\otimes B}\otimes M .$$
But now we can extend it to all small functors by using the isomorphism from remark \ref{properties of smallness}. The resulting functors are clearly small again. 

\begin{definition}\label{M^V over itself}
For two small functors $F,G\co\mc{U}\to\mc{M}$ we define a new functor $\mc{M}^{\mc{U}}(F,G)\co\mc{U}\to\mc{V}$ in the following way:
    $$ \mc{M}^{\mc{U}}(F,G)(K):=\Vnat(F,G(K\otimes\frei)) $$
There is also a {\it cotensor} $(\mc{V}^{\mc{U}})^{\rm op}\times\mc{M}^{\mc{U}}\to\mc{M}^{\mc{U}}$ given by 
    $$ H^G(K):=\int_{K\in\mc{K}}H(K\otimes\frei)^{G(K)},$$
where on the right hand side the cotensor is the underlying \mc{V}-cotensor from \ref{V + small} and \mc{K} is a defining subcategory for $G$. 
\end{definition}

These functors endow $\mc{M}^{\mc{U}}$ with a tensor, cotensor and an enrichment over $\mc{V}^{\mc{U}}$.
\begin{lemma}\label{adjunctions}
For small functors $F,H\co\mc{U}\to\mc{M}$ and $G\co\mc{U}\to\mc{V}$ we have:
    $$ \mc{M}^{\mc{U}}(F,H^G)\cong\mc{M}^{\mc{U}}(F\otimes G,H)\cong\mc{M}^{\mc{U}}(F,H)^G $$
\end{lemma}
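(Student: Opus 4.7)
The approach is a standard exercise in enriched end and coend calculus, essentially an instance of the Day convolution adjunction \cite{day:closed}. The central computational tool is the coend presentation of the tensor, coming from its definition as a left Kan extension along $\otimes\colon\mc{U}\otimes\mc{U}\to\mc{U}$:
$$(F\otimes G)(K)\cong\int^{(A,B)}\mc{V}(A\otimes B,K)\otimes F(A)\otimes G(B).$$

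For the first isomorphism $\mc{M}^{\mc{U}}(F\otimes G,H)\cong\mc{M}^{\mc{U}}(F,H^G)$, I would substitute this coend into
$$\mc{M}^{\mc{U}}(F\otimes G,H)(K)=\int_{K'}\mc{V}\bigl((F\otimes G)(K'),H(K\otimes K')\bigr),$$
then use that $\mc{V}(-,X)$ sends coends in the first variable to ends and apply Fubini. Pushing $G(B)$ into the target as a cotensor in $\mc{M}$ via the identity $\mc{V}(V\otimes W,X)\cong\mc{V}(V,X^W)$, the end over $K'$ collapses by enriched Yoneda to $H(K\otimes A\otimes B)^{G(B)}$. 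Recognizing the remaining end over $B$ as the cotensor $H^G(K\otimes A)$ from Definition \ref{M^V over itself}, one arrives at $\int_A\mc{V}(F(A),H^G(K\otimes A))=\mc{M}^{\mc{U}}(F,H^G)(K)$.

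For the second isomorphism $\mc{M}^{\mc{U}}(F,H^G)\cong\mc{M}^{\mc{U}}(F,H)^G$ no coend is needed. Unfolding directly,
$$(\mc{M}^{\mc{U}}(F,H)^G)(K)=\int_{K'}\int_A\mc{V}\bigl(G(K'),\mc{V}(F(A),H(K\otimes K'\otimes A))\bigr),$$
and the swap $\mc{V}(V,\mc{V}(W,X))\cong\mc{V}(W,\mc{V}(V,X))$ combined with Fubini to move the $K'$-end inside produces $\int_A\mc{V}(F(A),\int_{K'}H(K\otimes A\otimes K')^{G(K')})$. The inner end is $H^G(K\otimes A)$ by the very definition of the cotensor, and one recovers $\mc{M}^{\mc{U}}(F,H^G)(K)$.

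The main obstacle is bookkeeping. Since \mc{U} may be large, one must justify the well-definedness of all ends and coends occurring in the argument. This is settled using the smallness of $F$, $G$ and $H$ together with axiom \ref{U2}: small defining subcategories for the functors involved can be chosen compatibly and closed under $\otimes$, so that every integration effectively takes place over a small category where the Fubini theorem for ends and coends is valid. The remaining verification is naturality in $F$, $G$, $H$ and $K$, which is routine but tedious.
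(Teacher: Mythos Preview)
Your proof is correct and follows essentially the same end/coend calculus route as the paper. The only presentational difference is that the paper, rather than writing out the global Day convolution coend
$$(F\otimes G)(K)\cong\int^{(A,B)}\mc{V}(A\otimes B,K)\otimes F(A)\otimes G(B),$$
instead expands $F\cong\int^A R^A\otimes F(A)$ and $G\cong\int^B R^B\otimes G(B)$ over their small defining subcategories and uses the formula $(R^A\otimes M)\otimes R^B\cong R^{A\otimes B}\otimes M$ on representables directly. This packaging absorbs the size bookkeeping you flag at the end into the very first step, since all integrations are over the defining subcategories from the outset; otherwise the two computations are line-by-line equivalent, and the paper likewise dismisses the second isomorphism as similar.
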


This proof is considerably easier than \cite[Prop. 5.2]{Lydakis}. 
\begin{proof}
Let $F$ and $G$ be given by the following colimits:
   $$ F\cong\int^AR^A\otimes FA \text{ and } F\cong\int^BR^B\otimes GB $$
We have the following isomorphisms:
\begin{align*}
    \mc{M}^{\mc{U}}(F,H^G) &\cong \int^A\mc{M}^{\mc{U}}(R^A\otimes FA, K\mapsto H^G(K\otimes\frei)) \\
    & \cong \int^A\int^BH(A\otimes B)^{FA\otimes GB} \\
    & \cong \int^A\int^B\mc{M}^{\mc{U}}(R^{A\otimes B}\otimes FA\otimes GB,H) \\         
    & \cong \mc{M}^{\mc{U}}(F\otimes G,H)    
\end{align*}
The second isomorphism is similar.
\end{proof}

\begin{theorem}
Let \mc{V} is a closed symmetric monoidal model category. Then the category $\mc{V}^{\mc{U}}$ can be given the structure of a closed symmetric monoidal category. If \mc{M} is \mc{V}-category, then $\mc{M}^{\mc{U}}$ can be given the structure of a $\mc{V}^{\mc{U}}$-category. The functor 
    $$ \lambda\co\mc{M}\to\mc{M}^{\mc{U}} $$
from \emph{\ref{restricted lambda-rho}} is the left adjoint of a symmetric monoidal \mc{V}-adjunction. 
\end{theorem}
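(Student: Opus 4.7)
The plan is to verify three things in sequence: (a) that $\mc{V}^{\mc{U}}$ is closed symmetric monoidal with unit $R^S$, (b) that the tensor, cotensor and enrichment of \ref{M^V over itself} satisfy the $\mc{V}^{\mc{U}}$-category axioms, and (c) that $\lambda$ is strong symmetric monoidal, so that the \mc{V}-adjunction of \ref{restricted lambda-rho} becomes a symmetric monoidal \mc{V}-adjunction.

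For (a), I would first identify the unit. Directly from the left Kan extension formula, tensor on representables computes as $R^A\otimes R^B\cong R^{A\otimes B}$, so by axiom \ref{U1}(4) the object $R^S\in\mc{V}^{\mc{U}}$ satisfies $R^S\otimes R^A\cong R^A$. Using the coend presentation $F\cong\int^A R^A\otimes F(A)$ from \ref{properties of smallness} and the fact that $\otimes$ is a left adjoint in each variable by \ref{adjunctions} (hence commutes with coends), one concludes $R^S\otimes F\cong F$ for every small $F$. Associativity and symmetry transfer from those of $\otimes$ on $\mc{U}$: on representables they read $R^{A\otimes(B\otimes C)}\cong R^{(A\otimes B)\otimes C}$ and $R^{A\otimes B}\cong R^{B\otimes A}$, and they extend to all small functors via the coend presentation. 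Closedness is exactly \ref{adjunctions} applied with $\mc{M}=\mc{V}$. For (b), the three adjunctions of \ref{adjunctions} supply the tensor/cotensor/hom triple required for a $\mc{V}^{\mc{U}}$-enrichment; the coherence conditions for the action of $\mc{V}^{\mc{U}}$ on $\mc{M}^{\mc{U}}$ reduce on representables $R^A\otimes M$ with $M\in\mc{M}$ to the associativity and unit axioms already available in $\mc{V}$ and $\mc{M}$.

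For (c), the key identity is
    $$ \lambda(M)\otimes\lambda(N)=(M\otimes R^S)\otimes(N\otimes R^S)\cong(M\otimes N)\otimes(R^S\otimes R^S)\cong(M\otimes N)\otimes R^S=\lambda(M\otimes N), $$
obtained by using the symmetric braiding of $\mc{V}$ to commute $N$ past $R^S$ and then the unit computation from (a). When $\mc{M}=\mc{V}$ this makes $\lambda$ strong symmetric monoidal, since also $\lambda(S)=S\otimes R^S\cong R^S$ is the unit of $\mc{V}^{\mc{U}}$. For a general $\mc{V}$-category $\mc{M}$ the same calculation exhibits $\lambda$ as compatible with the $\mc{V}^{\mc{U}}$-action coming from the strong monoidal $\lambda\co\mc{V}\to\mc{V}^{\mc{U}}$, and the $\mc{V}$-adjunction piece is exactly \ref{restricted lambda-rho}. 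Dually $\rho$ acquires the structure of a lax symmetric monoidal functor from the mate construction.

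The main obstacle is the coherence bookkeeping: verifying pentagon, hexagon and unit axioms on representables is immediate from the corresponding axioms in $\mc{V}$, but lifting them to arbitrary small functors requires checking that the canonical isomorphisms are natural with respect to the coend presentation. This is routine but tedious, and I would appeal to Day's treatment in \cite{day:closed}, where the convolution product is shown to inherit closed symmetric monoidal structure from that of the indexing category, rather than redo the coherence check from scratch.
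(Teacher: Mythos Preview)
Your proposal is correct and follows essentially the same approach as the paper: both derive the closed symmetric monoidal and $\mc{V}^{\mc{U}}$-category structures from lemma \ref{adjunctions}, and both verify that $\lambda$ is symmetric monoidal via the computation $\lambda(M)\otimes\lambda(N)\cong(M\otimes N)\otimes R^{S\otimes S}\cong\lambda(M\otimes N)$. You supply more detail on the unit, associativity and symmetry in part (a) and are more explicit about the coherence issues (appealing to Day), whereas the paper's proof is terser and simply asserts that the displayed isomorphisms commute with the twist map; you are also slightly more careful than the paper in distinguishing the case $\mc{M}=\mc{V}$ from the general $\mc{V}$-module case when discussing what ``symmetric monoidal'' means for $\lambda$.
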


\begin{proof}
It follows from lemma \ref{adjunctions}, that $\mc{V}^{\mc{U}}$ is closed symmetric monoidal and that $\mc{M}^{\mc{U}}$ is a $\mc{V}^{\mc{U}}$-category. It is clear, that $(\lambda,\rho)$ form a \mc{V}-adjunction. For objects $M$ and $N$ in \mc{M} we also have the following natural isomorphisms:
\begin{align*}
   \lambda(M\otimes N)&=(M\otimes N)\otimes R^S\cong (M\otimes N)\otimes R^{S\otimes S}\cong (M\otimes R^S)\otimes(N\otimes R^S) \\
                      &=\lambda(M)\otimes\lambda(N)
\end{align*}
These isomorphism commute with the twist map, which proves, that $\lambda$ is a symmetric monoidal functor.
\end{proof}

The unit of the monoidal structure on $\mc{V}^{\mc{U}}$ is given by the inclusion functor $R^S\co\mc{U}\inj\mc{V}$, which for $\mc{U}=\mc{V}$ is, of course, the identity. Observe, that the unit of $\mc{V}^{\mc{U}}$ is always projectively cofibrant by \ref{rep. cof}, because the unit $S$ is cofibrant.

In order to prove compatibility of the projective model structure of $\mc{M}^{\mc{U}}$ with its $\mc{V}^{\mc{U}}$-enrichment, we have to remember, that \mc{V} and \mc{M} were assumed to be cofibrantly generated with generating sets $I_{\mc{V}}, J_{\mc{V}}$ and $I_{\mc{M}}, J_{\mc{M}}$ respectively. 
Here, as everywhere, the $I$'s are used for the cofibrations and the $J$'s for the acyclic cofibrations.
It is shown in \cite{Chorny-Dwyer:small}, that $\mc{V}^{\mc{U}}$ and $\mc{M}^{\mc{U}}$ are class-cofibrantly generated with generating sets 
    $$I_{\mc{M}^{\mc{U}}}=\{R^A\otimes i\,|\,i\in I_{\mc{M}}, A\in{\rm Ob}(\mc{U})\} $$
and 
    $$J_{\mc{M}^{\mc{U}}}=\{R^A\otimes j\,|\,j\in J_{\mc{M}}, A\in{\rm Ob}(\mc{U})\},$$
where $\mc{M}=\mc{V}$ is a special case.
Remember again, that here we are using the generalized small object argument \cite{Chorny:general} to construct (non-functorial) factorizations, which accepts locally small classes rather than sets as input.

\begin{theorem}\label{tensor + projective}
The category $\mc{M}^{\mc{U}}$ with its projective model structure is a $\mc{V}^{\mc{U}}$-model category, where $\mc{V}^{\mc{U}}$ has the projective model structure.
\end{theorem}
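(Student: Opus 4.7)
The plan is to verify the pushout-product axiom via an adjoint form of Lemma \ref{smash compatible and generators}, evaluated on the generating classes $I_{\mc{V}^{\mc{U}}}, J_{\mc{V}^{\mc{U}}}$ and $I_{\mc{M}^{\mc{U}}}, J_{\mc{M}^{\mc{U}}}$ for the projective model structures. Because these classes have the very rigid form $R^A \otimes i$ for $A \in \mc{U}$ and $i$ a generating (acyclic) cofibration of $\mc{V}$ or $\mc{M}$, the entire check will reduce to a corresponding check already supplied by the fact that $\mc{M}$ is a $\mc{V}$-model category.

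The central computation is the following identity for the tensor $\otimes\co\mc{V}^{\mc{U}}\times\mc{M}^{\mc{U}}\to\mc{M}^{\mc{U}}$ of \ref{varodot} on pairs of representably generated maps. For $A_1,A_2\in\mc{U}$ (recall Axiom \ref{U2}(5) which guarantees $A_1\otimes A_2\in\mc{U}$), for $i\co V_1\to V_2$ a map in $\mc{V}$ and $j\co M_1\to M_2$ a map in $\mc{M}$, the formula $R^{A_1}\otimes R^{A_2}\cong R^{A_1\otimes A_2}$ together with the distributivity of $\otimes$ over the tensoring by objects of $\mc{V}$ and $\mc{M}$ yields, after forming the pushout,
\begin{equation*}
(R^{A_1}\otimes i)\,\square\,(R^{A_2}\otimes j)\;\cong\;R^{A_1\otimes A_2}\otimes (i\,\square\, j).
\end{equation*}
This is the analog for Day convolution of the familiar identity for the smash product of simplicial functors used by Lydakis, and is immediate from inspecting the coend that defines the left Kan extension.

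Now the argument concludes in two steps. Because $\mc{M}$ is a $\mc{V}$-model category, $i\,\square\, j$ is a cofibration in $\mc{M}$ whenever $i$ and $j$ are generating cofibrations of $\mc{V}$ and $\mc{M}$, and it is acyclic if either $i$ or $j$ is. Next, the functor $R^{A_1\otimes A_2}\otimes(\frei)\co\mc{M}\to\mc{M}^{\mc{U}}$ is a left adjoint (its right adjoint is evaluation at $A_1\otimes A_2$) and it sends the generating set $I_{\mc{M}}$ (respectively $J_{\mc{M}}$) into $I_{\mc{M}^{\mc{U}}}$ (respectively $J_{\mc{M}^{\mc{U}}}$); hence by the standard cellularity argument it sends every cofibration to a projective cofibration and every acyclic cofibration to a projective acyclic cofibration. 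Combining these two observations gives the pushout-product axiom on generators, and by the adjoint version of Lemma \ref{smash compatible and generators} it upgrades to the full class of projective (acyclic) cofibrations.

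The unit condition is trivial: by Axiom \ref{U1}(4) the unit $S$ lies in $\mc{U}$, so the unit of the monoidal structure on $\mc{V}^{\mc{U}}$ is $R^S$, which is projectively cofibrant by Remark \ref{rep. cof}; hence one may take $QR^S=R^S$ and the comparison map $QR^S\otimes\dgrm{X}\to R^S\otimes\dgrm{X}\cong\dgrm{X}$ is the identity. The main (very mild) obstacle is justifying the generator-level reduction in the class-cofibrantly generated setting, which is handled exactly as in the construction of the projective structure \ref{V + projective}: the lifting and retract arguments underlying Lemma \ref{smash compatible and generators} are insensitive to whether $I$ and $J$ are sets or locally small classes, so the conclusion passes through.
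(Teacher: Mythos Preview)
Your proof is correct and follows essentially the same approach as the paper: both reduce to generators via Lemma~\ref{smash compatible and generators}, identify the pushout product $(R^{A_1}\otimes i)\,\square\,(R^{A_2}\otimes j)\cong R^{A_1\otimes A_2}\otimes(i\,\square\,j)$, and conclude using that $\mc{M}$ is a $\mc{V}$-model category together with the adjunction between $R^{A_1\otimes A_2}\otimes(\frei)$ and evaluation at $A_1\otimes A_2$. Your version is slightly more explicit about the unit condition and the class-cofibrantly generated subtlety, but the core argument is identical.
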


\begin{proof}
Using lemma \ref{smash compatible and generators} we have to check that for arbitrary objects $K_1$ and $K_2$ in \mc{U} and arbitrary maps $i'_1$ in $I_{\mc{V}}$ and $i'_2$ in $\mc{I}_{\mc{M}}$ the pushout product of $i_1=R^{K_1}\otimes i'_1\co R^{K_1}\otimes A\to R^{K_1}\otimes B$ and $i_2=R^{K_2}\otimes i'_2\co R^{K_2}\otimes C\to R^{K_2}\otimes D$ given by 
    $$ i_1\,\square\, i_2\co (R^{{K_1}\otimes {K_2}}\otimes A\otimes D)\sqcup_{(R^{{K_1}\otimes {K_2}}\otimes A\otimes C)}(R^{{K_1}\otimes {K_2}}\otimes B\otimes C)\to R^{{K_1}\otimes {K_2}}\otimes B\otimes D $$
is a projective cofibration. So, let us map it into an arbitrary acyclic projective fibration $\dgrm{X}\to\dgrm{Y}$. But in the diagram
\diagr{ (A\otimes D)\sqcup_{(A\otimes C)}(B\otimes C)\ar[r]\ar[d] & \dgrm{X}({K_1}\otimes {K_2}) \ar[d] \\
        B\otimes D \ar[r] & \dgrm{Y}({K_1}\otimes {K_2})}
there exists a lifting, because the left hand side is a cofibration and the right hand side is an acyclic fibration. This diagram is adjoint to our original lifting problem and proves the first part. 
The remaining properties to be shown are similar.
\end{proof}

\begin{theorem}
Let \mc{V} be right proper. The category $\mc{M}^{\mc{U}}$ with its homotopy model structure is a $\mc{V}^{\mc{U}}$-model category, where $\mc{V}^{\mc{U}}$ has either the projective or the homotopy model structure.
\end{theorem}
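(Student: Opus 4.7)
The plan is to verify the $\mc{V}^{\mc{U}}$-enriched pushout-product axiom of Definition \ref{V-model category}(i). Since the cofibrations of the homotopy model structures on $\mc{V}^{\mc{U}}$ and on $\mc{M}^{\mc{U}}$ coincide with the projective cofibrations, the case ``cofibration $\square$ cofibration is a cofibration'' is immediate from Theorem \ref{tensor + projective}. Condition (ii) of Definition \ref{V-model category} is vacuous, since the unit $R^S$ is projectively cofibrant by Remark \ref{rep. cof}. It thus suffices to verify the acyclic case.

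I will argue via the dual formulation: for a homotopy fibration $p\co\dgrm{X}\to\dgrm{Y}$ in $\mc{M}^{\mc{U}}$ and a projective cofibration $i\co\dgrm{K}_1\to\dgrm{K}_2$ in $\mc{V}^{\mc{U}}$, the map $p\boxtimes i\co\dgrm{X}^{\dgrm{K}_2}\to\dgrm{Y}^{\dgrm{K}_2}\times_{\dgrm{Y}^{\dgrm{K}_1}}\dgrm{X}^{\dgrm{K}_1}$ should be a homotopy fibration, acyclic if $p$ is. Theorem \ref{tensor + projective} already provides that $p\boxtimes i$ is a projective fibration, and an acyclic projective fibration whenever $p$ is acyclic, since acyclic fibrations coincide in the projective and homotopy structures. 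By Lemma \ref{F-fibrant} the remaining content is that applying $(\frei)^h$ to the defining pullback square of $p\boxtimes i$ yields a homotopy pullback in the projective structure.

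This follows as in the proof of Theorem \ref{V + homotopy}, once one establishes a natural weak equivalence of the form $(\dgrm{X}^{\dgrm{K}})^h\simeq(\dgrm{X}^h)^{\dgrm{K}}$ compatible with pullbacks. By Definition \ref{M^V over itself}, $\dgrm{X}^{\dgrm{K}}$ is an end over a small defining subcategory of $\dgrm{K}$ of underlying $\mc{V}$-cotensors $\dgrm{X}(K\otimes\frei)^{\dgrm{K}(\frei)}$. Since $\dgrm{K}$ is projectively cofibrant and every object of $\mc{U}$ is cofibrant by Axiom \ref{U1}(2), each value $\dgrm{K}(K')$ is cofibrant in $\mc{V}$, so $(\frei)^{\dgrm{K}(K')}$ is right Quillen and preserves weak equivalences between fibrant objects. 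Combined with the fact that $\fibfun$ in $\mc{M}$ commutes up to weak equivalence with the finite limits of fibrations making up the $\boxtimes$-pullback, this yields the required commutation.

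When $\mc{V}^{\mc{U}}$ carries the homotopy model structure, I must further allow $i$ to be a homotopy-acyclic cofibration, beyond the projectively acyclic ones. Testing against the generating class from Definition \ref{J^ho} applied to $\mc{V}^{\mc{U}}$ (via Theorem \ref{generators for ho}) reduces this to $\mc{V}$-mapping-cylinder cofibrations $j(f^*)\,\square\, i'$ for $f$ an acyclic cofibration in $\mc{U}$ and $i'\in I_{\mc{V}}$; cotensoring such a map into a homotopy-fibrant $\dgrm{X}$ becomes a weak equivalence because small functors preserve $\mc{V}$-homotopy equivalences by Remark \ref{Whitehead}, and the $\mc{V}$-mapping-cylinder construction of Definition \ref{cyl(f)} is preserved by the enriched cotensor. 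The main obstacle I expect is the precise verification of the commutation $(\dgrm{X}^{\dgrm{K}})^h\simeq(\dgrm{X}^h)^{\dgrm{K}}$ through both the external end and the internal $\mc{V}$-cotensor; this is the place where Axiom \ref{U1}(2), that every object of $\mc{U}$ is cofibrant, becomes essential, since it guarantees the objectwise cofibrancy of the indexing data that lets cotensor distribute over $\fibfun$.
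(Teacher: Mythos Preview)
Your overall strategy—pass to the dual formulation via $p\boxtimes i$ and verify the homotopy-fibration condition through Lemma~\ref{F-fibrant}—is the same as the paper's. But you miss the paper's decisive simplification, and this leaves the gap you yourself identify unresolved.

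The paper does \emph{not} attempt to establish a commutation $(\dgrm{X}^{\dgrm{K}})\fibfun\simeq(\dgrm{X}\fibfun)^{\dgrm{K}}$ for a general projective cofibration $\dgrm{K}_1\to\dgrm{K}_2$ in $\mc{V}^{\mc{U}}$. Instead it reduces, by the usual closure properties of the pushout product, to the \emph{generating} cofibrations $i=R^A\otimes i'$ with $i'\co E\to F$ in $I_{\mc{V}}$ and $A$ in $\mc{U}$. For these the $\mc{V}^{\mc{U}}$-cotensor is computed explicitly as
\[
\dgrm{X}^{R^A\otimes F}(K)\;=\;\dgrm{X}(K\otimes A)^F,
\]
so that precomposing with $\fibfun$ gives $K\mapsto\dgrm{X}(K^{\fib}\otimes A)^F$. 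Since both $K$ and $A$ lie in $\mc{U}$ and are therefore cofibrant (Axiom~\ref{Uho}), there is a weak equivalence $(K\otimes A)^{\fib}\simeq(K^{\fib}\otimes A)^{\fib}$ under $K\otimes A$, and the required homotopy pullback comparison becomes a direct two-square pasting argument. No end has to be analyzed; only the ordinary $\mc{V}$-cotensor by a fixed $F$ and the precomposition $K\mapsto K\otimes A$ appear.

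Your route, by contrast, keeps a general projectively cofibrant $\dgrm{K}$ and asserts that each value $\dgrm{K}(K')$ is cofibrant in $\mc{V}$. That step is not justified under the paper's hypotheses: the generating projective cofibrations evaluate at $K'$ to maps of the form $\mc{V}(A,K')\otimes i'$, and nothing guarantees that $\mc{V}(A,K')$ is cofibrant in $\mc{V}$—Axiom~\ref{Uho} only makes objects of $\mc{U}$ cofibrant, not arbitrary mapping objects. Even if objectwise cofibrancy held, pushing a weak equivalence through the end defining $\dgrm{X}^{\dgrm{K}}$ would still require a homotopy-invariance argument for that end, which you do not supply. The fix is exactly the reduction to generators; once you make it, the ``main obstacle'' you flag disappears.

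For the acyclic case when $\mc{V}^{\mc{U}}$ carries the homotopy structure, the paper's argument is also simpler than your proposed reduction to $J^{\mathrm{ho}}$: by Lemma~\ref{fib + we} a homotopy-acyclic cofibration $j$ in $\mc{V}^{\mc{U}}$ restricts to an acyclic projective cofibration on the fibrant objects of $\mc{U}$; Theorem~\ref{tensor + projective} then shows that $i\,\square\, j$ restricted to fibrant objects is an acyclic projective cofibration, whence $i\,\square\, j$ is homotopy-acyclic, again by Lemma~\ref{fib + we}. This avoids any appeal to cofibrant generation of the homotopy structure, which the paper explicitly does not claim in general.
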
 

\begin{proof}
Cofibrations do not change. It remains to show, that ${\rm fib}\boxtimes I_{\mc{V}^{\mc{U}}}\subset{\rm fib}$ and $I_{\mc{M}^{\mc{U}}}\,\square\, {\rm acof}\subset{\rm acof}$, where ${\rm fib}$ and ${\rm acof}$ are the classes of fibrations and acyclic co\-fibra\-tions in the homotopy model structure on $\mc{M}^{\mc{U}}$ and ${\rm acof}$ is the class of acyclic co\-fi\-bra\-tions in the homotopy model structure on $\mc{V}^{\mc{U}}$.

Let $\dgrm{X}\to\dgrm{Y}$ be a fibration in the homotopy model structure and let $i\co E\to F$ be in $\mc{I}_{\mc{V}}$. We have to show, that for every object $A$ in \mc{U} the map
\begin{equation}\label{map1}
  \dgrm{X}^{R^A\otimes F}\to\dgrm{Y}^{R^A\otimes F}\times_{\dgrm{Y}^{R^A\otimes E}}\dgrm{X}^{R^A\otimes E} 
\end{equation}
is a fibration in the homotopy model structure. We know already by \ref{tensor + projective}, that it is an objectwise fibration. Precomposing with the functor $\fibfun$ commutes with limits of the functor category. So it remains to show, that the diagram
\diagr{ \dgrm{X}^{R^A\otimes F} \ar[r]\ar[d] & \dgrm{X}^{R^A\otimes F}\fibfun \ar[d] \\
        \dgrm{Y}^{R^A\otimes F}\times_{\dgrm{Y}^{R^A\otimes E}}\dgrm{X}^{R^A\otimes E} \ar[r]     & (\dgrm{Y}^{R^A\otimes F}\fibfun)\times_{(\dgrm{Y}^{R^A\otimes E}\fibfun)}(\dgrm{X}^{R^A\otimes E}\fibfun)}
is an objectwise homotopy pullback diagram. 
The functor $\dgrm{X}^{R^A\otimes F}\co\mc{U}\to\mc{M}$ is given by
     $$ K\mapsto \dgrm{X}(K\otimes A)^F.$$ 
So in the following diagram for every $K$ and $A$ the first square has to be a homotopy pullback square.
\diag{ \dgrm{X}^F(K\otimes A) \ar[r]\ar[d]\ar@{}[dr]|{1} &  \dgrm{Y}^F(K\otimes A)\times_{\dgrm{Y}^{E}(K\otimes A)}\dgrm{X}^{E}(K\otimes A) \ar[d]   \\
        \dgrm{X}^{F}(K^\fib\otimes A) \ar[r]\ar[d] \ar@{}[dr]|{2} & \dgrm{Y}^{F}(K^\fib\otimes A)\times_{\dgrm{Y}^{E}(K^\fib\otimes A)}\dgrm{X}^{E}(K^\fib\otimes A) \ar[d]  \\ 
        \dgrm{X}^{F}(K^\fib\otimes A)^\fib \ar[r] & \dgrm{Y}^{F}(K^\fib\otimes A)^\fib\times_{\dgrm{Y}^{E}(K^\fib\otimes A)^\fib}\dgrm{X}^{E}(K^\fib\otimes A)^\fib }{alotofpullbacks1}
Since the map $\dgrm{X}^F\to\dgrm{Y}^F\times_{\dgrm{Y}^E}\dgrm{X}^E$ is a fibration in the homotopy model structure by \ref{V + homotopy}, the second square is a homotopy pullback. 
Using the fact, that $\fibfun$ assigns a weak equivalence $Z\to Z^\fib$ to every $Z$ and that every object in \mc{U} is cofibrant, we can construct a weak equivalence $(K\otimes A)^\fib\simeq(K^\fib\otimes A)^\fib$ under $K\otimes A$. All our functors preserve this weak equivalence, because both objects are fibrant. Hence the outer square in diagram (\ref{alotofpullbacks1}) is a homotopy pullback square, since it is weakly equivalent to the following homotopy pullback square:
\diag{ \dgrm{X}^F(K\otimes A) \ar[r]\ar[d] &  \dgrm{Y}^F(K\otimes A)\times_{\dgrm{Y}^{E}(K\otimes A)}\dgrm{X}^{E}(K\otimes A) \ar[d]   \\
        \dgrm{X}^{F}(K\otimes A)^\fib \ar[r] & \dgrm{Y}^{F}(K\otimes A)^\fib\times_{\dgrm{Y}^{E}(K\otimes A)^\fib}\dgrm{X}^{E}(K\otimes A)^\fib }{alotofpullbacks2}
Then square 1 of (\ref{alotofpullbacks1}) is a homotopy pullback by a standard argument on homotopy pullbacks.
This proves ${\rm fib}\boxtimes I_{\mc{V}^{\mc{U}}}\subset{\rm fib}$.

Now for $I_{\mc{M}^{\mc{U}}}\,\square\, {\rm acof}\subset{\rm acof}$. 
Let $j\co\dgrm{E}\to\dgrm{F}$ be an acyclic cofibration in the homotopy model structure on $\mc{V}^\mc{U}$. By \ref{fib + we} this implies, that the restriction of $j$ to the category of fibrant objects is an acyclic projective cofibration. It follows from \ref{tensor + projective}, that the restriction of the map $i\,\square\,j$ to the category of fibrant objects is an acyclic projective cofibration. This proves, that $i\,\square\,j$ is acyclic cofibration in the homotopy model structure.
\end{proof}

\begin{definition}
Let \mc{M} be a model category and a \mc{V}-category for a closed symmetric monoidal model category \mc{V} with cofibrant unit. Then \mc{M} is said to be a {\it semi-\mc{V}-model category}, if the following properties are satisfied:
\begin{punkt}
    \item ${\rm cof}_{\mc{M}}\,\square\,{\rm cof}_{\mc{V}}\subset{\rm cof}_{\mc{M}}$
    \item ${\rm acof}_{\mc{M}}\,\square\,{\rm cof}_{\mc{V}}\subset{\rm acof}_{\mc{M}}$
    \item The map $i\,\square\, j$ is an acyclic cofibration for every cofibration $i$ in \mc{M} and every acyclic cofibration $j$ between fibrant objects in \mc{V}. 
\end{punkt}
Here cof and acof denote the classes of cofibrations and acyclic cofibrations with the respective category as subscript.
\end{definition} 

\begin{theorem}\label{tensor + L-stable}
Let \mc{V} be right proper.
The category $\mc{M}^{\mc{U}}$ with its $L$-stable model structure is a $\mc{V}^{\mc{U}}$-model category, where $\mc{V}^{\mc{U}}$ has either the projective or the homotopy model structure. It is a semi-$\mc{V}^{\mc{U}}$-model category, if we equip $\mc{V}^{\mc{U}}$ with the $L$-stable model structure.
If \mc{M} is left proper and all maps in $I_{\mc{M}}$ have cofibrant source, we have a full $\mc{V}^{\mc{U}}$-model category. 
\end{theorem}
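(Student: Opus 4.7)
The plan is to reduce each assertion to the pushout-product axiom in its appropriate form, exploiting the fact that cofibrations in the $L$-stable model structure on $\mc{M}^{\mc{U}}$ coincide with projective cofibrations. Hence the cof-cof half of the axiom follows at once from Theorem \ref{tensor + projective}, and only the interaction with acyclic (co)fibrations requires fresh work. I would argue via the dual form: given an $L$-stable fibration $p\co\dgrm{X}\to\dgrm{Y}$ in $\mc{M}^{\mc{U}}$ and a cofibration $i\co\dgrm{E}\to\dgrm{F}$ in $\mc{V}^{\mc{U}}$, check that the induced map $p\boxtimes i\co\dgrm{X}^{\dgrm{F}}\to\dgrm{Y}^{\dgrm{F}}\times_{\dgrm{Y}^{\dgrm{E}}}\dgrm{X}^{\dgrm{E}}$ is an $L$-stable fibration, becoming acyclic whenever $i$ or $p$ is.

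For the first assertion (where $\mc{V}^{\mc{U}}$ carries the projective or homotopy structure) I would apply the characterization of $L$-stable fibrations from Lemma \ref{L-stable fibrations}. That $p\boxtimes i$ is already a homotopy fibration is provided by the preceding theorem for the homotopy model structure. That the square relating $p\boxtimes i$ to its image under $R(-)^h L$ is a homotopy pullback reduces, via the natural isomorphism $R(\dgrm{X}^K)L\cong (R\dgrm{X}L)^K$ used in the proof of Theorem \ref{V + L-stable} and extended $\mc{V}^{\mc{U}}$-enriched to $\dgrm{E}$ and $\dgrm{F}$, to applying $(-)^{\dgrm{E}}$ and $(-)^{\dgrm{F}}$ termwise to the homotopy-pullback squares witnessing that $p$ itself is an $L$-stable fibration. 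The acyclic cases follow by the same method, using that cotensoring preserves objectwise weak equivalences between fibrants and that left-properness arguments transfer along pushouts.

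For the semi-$\mc{V}^{\mc{U}}$-model assertion, where $\mc{V}^{\mc{U}}$ is equipped with its $L$-stable structure, the extra input needed is Lemma \ref{fib + we}: an $L$-stable weak equivalence restricts to an objectwise weak equivalence on fibrant objects. Thus when $j$ is an $L$-stably acyclic cofibration between fibrant objects of $\mc{V}^{\mc{U}}$, its restriction to fibrant inputs is objectwise acyclic, and the semi-model pushout-product condition follows from the projective-model result of Theorem \ref{tensor + projective} together with the fibrant-replacement analysis already employed in the proof of the homotopy-model case above.

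Finally, for the full $\mc{V}^{\mc{U}}$-model structure under the left-properness and cofibrant-source hypotheses, the approach is to verify the pushout-product axiom on generators and extend by the small object argument. Since the generators of $\mc{M}^{\mc{U}}$ are of the form $R^B\otimes i'$ with $i'\in I_\mc{M}$ of cofibrant source, their pushout with any generating acyclic cofibration from $J^{L\text{-stable}}_{\mc{V}^{\mc{U}}}$ as in Definition \ref{J^lin} lands in a pushout that is a homotopy pushout under left properness of $\mc{M}^{\mc{U}}$ (established in Theorem \ref{V + L-stable}); the cofibrant-source hypothesis ensures that the relevant pushout corners are weakly equivalent to their homotopy analogs, so one can then invoke the semi-model result to conclude. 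The hard part will be controlling the $L$-stable acyclic cofibrations in $\mc{V}^{\mc{U}}$ directly, since they are not characterized by a simple objectwise condition as in the projective or homotopy structures; without the restriction-to-fibrant trick of Lemma \ref{fib + we}, a direct verification via the generators $j(K)$ from Definition \ref{J^lin} would require a rather intricate analysis of how the mapping cylinders of $\tau_K$ interact with cotensors over general functors in $\mc{V}^{\mc{U}}$.
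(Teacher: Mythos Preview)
Your treatment of the first two assertions is essentially the paper's argument. For the first part the paper works explicitly with generators $i=R^A\otimes(K_1\to K_2)$ rather than with a general cofibration $\dgrm{E}\to\dgrm{F}$: it first invokes Theorem \ref{V + L-stable} to see that $p\boxtimes(K_1\to K_2)$ is an $L$-stable fibration, and then observes that cotensoring with $R^A$ commutes with limits and with $T=R(-)^hL$. Your more abstract claim that the isomorphism $R(\dgrm{X}^K)L\cong(R\dgrm{X}L)^K$ extends to arbitrary $\dgrm{E}\in\mc{V}^{\mc{U}}$ is delicate because of the interposed fibrant replacement in $(-)^h$; reducing to generators avoids this. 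For the semi-model assertion your idea is right, though the relevant fact is not Lemma \ref{fib + we} itself but its $L$-stable analogue: an $L$-stable equivalence between $L$-stably fibrant objects is already a projective weak equivalence, so $j$ is a projectively acyclic cofibration and Theorem \ref{tensor + projective} applies.

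Where your proposal diverges from the paper is the final assertion. You propose checking the pushout product on generating acyclic cofibrations $j(K)\,\square\,i$ from $J^{L\text{-stable}}_{\mc{V}^{\mc{U}}}$, and you correctly anticipate that this analysis is intricate; moreover, since the paper does not claim the $L$-stable structure is class-cofibrantly generated for non-small \mc{U}, reducing to these maps may not even suffice. The paper's trick avoids this entirely by reducing on the \emph{other} variable: it takes an \emph{arbitrary} $L$-stably acyclic cofibration $j\co\dgrm{C}\to\dgrm{D}$ in $\mc{V}^{\mc{U}}$ and a \emph{generating} cofibration $R^A\otimes i$ with $i\in I_{\mc{M}}$. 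Since $R^A$ is projectively cofibrant, the first part of this very theorem (applied to $\mc{M}=\mc{V}$) gives that $R^A\otimes j$ is again an $L$-stably acyclic cofibration in $\mc{V}^{\mc{U}}$. Tensoring further with the cofibrant source or target $M,N$ of $i$ (this is where the hypothesis on $I_{\mc{M}}$ enters) preserves this by Theorem \ref{V + L-stable}. Now both legs $R^A\otimes M\otimes j$ and $R^A\otimes N\otimes j$ are $L$-stably acyclic cofibrations, and left properness of the $L$-stable structure on $\mc{M}^{\mc{U}}$ yields that the pushout product is one as well. No analysis of the specific shape of $j$ is needed.
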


\begin{proof}
We will show first ${\rm fib}\boxtimes I_{\mc{V}^{\mc{U}}}\subset{\rm fib}$, where $\fib$ is the class of $L$-stable fibrations. A map $p\co\dgrm{X}\to\dgrm{Y}$ in $\mc{M}^{\mc{U}}$ is an $L$-stable fibration if and only if $p$ is a homotopy fibration, such that the square
\diag{ \dgrm{X} \ar[r]\ar[d] & R\dgrm{X}^hL=:T\dgrm{X} \ar[d] \\
       \dgrm{Y} \ar[r] & R\dgrm{Y}^hL=:T\dgrm{Y} }{T-pullback}
is a homotopy pullback square in the homotopy structure. Let $i\co R^A\otimes K_1\to R^A\otimes K_2$ be in $I_{\mc{V}^{\mc{U}}}$. 
Then the map
    $$ \dgrm{X}^{K_2}\to\dgrm{Y}^{K_2}\times_{\dgrm{Y}^{K_1}}\dgrm{X}^{K_1}$$
is an $L$-stable fibration by \ref{V + homotopy}. Now cotensoring with $R^A$ commutes with limits, as well as with $T$, since the right adjoint $R$ commutes with cotensors. We deduce, that for the map
    $$ p\boxtimes i\co\dgrm{X}^{R^A\otimes K_2}\to\dgrm{Y}^{R^A\otimes K_2}\times_{\dgrm{Y}^{R^A\otimes K_1}}\dgrm{X}^{R^A\otimes K_1} $$
the diagram corresponding to (\ref{T-pullback}) is a homotopy pullback. Therefore $p\boxtimes i$ is a homotopy fibration.

Now we have to show, that $i\,\square\,j$ for $i$ a projective cofibration in $\mc{M}^{\mc{U}}$ and $j$ an acyclic cofibration in the $L$-stable model structure on $\mc{V}^{\mc{U}}$ between $L$-stably fibrant objects is an $L$-stable equivalence. But we know this already, since $j$ is simply an acyclic projective cofibration.

Finally assume, that \mc{M} is left proper, and recall, that by \ref{V + L-stable} in this case the $L$-stable model structure on $\mc{M}^{\mc{U}}$ is left proper. By assumption we also have, that the source and target of the generating cofibrations $I_{\mc{M}}$ of \mc{M} can be chosen to be cofibrant. Let $j\co\dgrm{C}\to\dgrm{D}$ be an acyclic $L$-stable cofibration in $\mc{V}^{\mc{U}}$. Since \mc{M} is right proper, the functor $R^A$ is projectively cofibrant for all objects $A$ in \mc{U}. By what we have shown already, the map
    $$ R^A\otimes\dgrm{C}\to R^A\otimes\dgrm{D} $$
then is an $L$-stably acyclic cofibration in $\mc{V}^{\mc{U}}$. Tensoring with a cofibrant object $M$ or $N$ produces again an $L$-stably acyclic cofibration by \ref{V + L-stable}. The fact, that the pushout product now is an $L$-stably acyclic cofibration, follows easily from the left properness of the $L$-stable model structure.
\end{proof}

For a comment about the cofibrancy assumption in \ref{tensor + L-stable} see question \ref{cofibrancy}.
For the special case $\mc{M}=\mc{V}$ we do not need left properness.
\begin{corollary}\label{V^V symmetric monoidal}
If \mc{V} is right proper, the category $\mc{V}^{\mc{U}}$ with its $L$-stable model structure is a closed symmetric monoidal model category.
\end{corollary}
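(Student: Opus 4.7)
The plan is to deduce this corollary from Theorem \ref{tensor + L-stable} specialized to $\mc{M} = \mc{V}$, using the symmetry of the monoidal product $\otimes$ on $\mc{V}^{\mc{U}}$ to upgrade the semi-model-category conclusion to full monoidal compatibility. First I would note that $\mc{V}^{\mc{U}}$ is already a closed symmetric monoidal category (this was supplied by Lemma \ref{adjunctions} and the theorem following it), so what remains is to verify the two axioms of Definition \ref{V-model category} for the $L$-stable model structure: the pushout product axiom and the unit axiom. The unit axiom is immediate: the unit of $\mc{V}^{\mc{U}}$ is $R^S$, which is projectively cofibrant by \ref{rep. cof} (since $S$ is cofibrant in $\mc{V}$), and projectively cofibrant objects remain cofibrant after $L$-stable localization.

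Next I would apply Theorem \ref{tensor + L-stable} to the case $\mc{M} = \mc{V}$. All its standing hypotheses hold ($\mc{V}$ is right proper, $\mc{U}$ satisfies axioms \ref{U1} and \ref{U2}), so the theorem asserts that $\mc{V}^{\mc{U}}$ is a semi-$\mc{V}^{\mc{U}}$-model category with both copies equipped with the $L$-stable model structure. Unpacking the semi-definition in our case gives $\mathrm{cof}\,\square\,\mathrm{cof}\subset\mathrm{cof}$ and $\mathrm{acof}\,\square\,\mathrm{cof}\subset\mathrm{acof}$, with all classes interpreted in the $L$-stable structure. The third clause of the semi-definition would only provide $\mathrm{cof}\,\square\,\mathrm{acof}\subset\mathrm{acof}$ under the restriction that the acyclic factor lie between fibrant objects, which is weaker than what a closed symmetric monoidal model category requires.

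The final step is to invoke the symmetry of $\otimes$ on $\mc{V}^{\mc{U}}$: the twist isomorphism $\dgrm{X}\otimes\dgrm{Y}\cong\dgrm{Y}\otimes\dgrm{X}$ induces a natural isomorphism of pushout products $i\,\square\,j\cong j\,\square\,i$, so the classes $\mathrm{cof}\,\square\,\mathrm{acof}$ and $\mathrm{acof}\,\square\,\mathrm{cof}$ agree up to isomorphism. Consequently the missing inclusion $\mathrm{cof}\,\square\,\mathrm{acof}\subset\mathrm{acof}$ follows at once from $\mathrm{acof}\,\square\,\mathrm{cof}\subset\mathrm{acof}$, and the full pushout product axiom holds. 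The only nontrivial content is the semi-model-category statement of Theorem \ref{tensor + L-stable}, which is already established; no appeal to left properness of $\mc{V}$ is needed for the present specialization, and I do not anticipate further obstacles.
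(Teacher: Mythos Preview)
Your proposal is correct and follows exactly the paper's approach: the paper's proof is the single sentence ``This follows from \ref{tensor + L-stable} and the fact, that the monoidal structure on $\mc{V}^{\mc{U}}$ is symmetric,'' and you have simply unpacked what that sentence means. In particular, your observation that symmetry converts the semi-model-category clause $\mathrm{acof}\,\square\,\mathrm{cof}\subset\mathrm{acof}$ into the missing $\mathrm{cof}\,\square\,\mathrm{acof}\subset\mathrm{acof}$, thereby avoiding any appeal to left properness, is precisely the point the paper makes in the remark immediately preceding the corollary.
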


\begin{proof}
This follows from \ref{tensor + L-stable} and the fact, that the monoidal structure on $\mc{V}^{\mc{U}}$ is symmetric.
\end{proof}

\section{The monoid axiom}

The material in this section is adapted from \cite{DRO:enriched} with only minor changes.
\begin{definition}\label{R-cell}
Let \mc{C} be a model category.
For a class \mc{R} of maps in \mc{C} let \mc{R}-cell be the class of maps obtained by all transfinite compositions of cobase changes of maps in \mc{R}. The precise definition can be found in \cite[10.5.8]{Hir:loc}.

Now let \mc{C} be a symmetric monoidal model category with monoidal product $\otimes$.
If \mc{O} is a class of objects in \mc{C} we denote by $\mc{R}\otimes\mc{O}$ all maps of the form 
    $$ r\otimes\id_X\co A\otimes X\to B\otimes X, $$
where $r$ runs through \mc{R} and $X$ trough \mc{O}. 
\end{definition}

We remind the reader that ${\rm acof}_{\mc{C}}$ stands for the class of acyclic cofibrations in \mc{C}. The following axiom was introduced by Schwede and Shipley in \cite{SS:monoid} in order to study the homotopy theory of monoids and algebras and modules over them in a monoidal model category.
\begin{axiom}
The model category \mc{C} satisfies the monoid axiom, if all maps in $\{{\rm acof}_{\mc{C}}\otimes{\rm Ob}(\mc{C})\}$-cell are weak equivalences.
\end{axiom}

\begin{remark}
\begin{punkt}
   \item
If the model category \mc{C} is cofibrantly generated with generating set $J_{\mc{C}}$ of acyclic cofibrations, then the monoid axiom holds in \mc{C}, if all maps in $J_{\mc{C}}\otimes{\rm Ob}(\mc{C})$ are weak equivalences. This follows, since in this case all acyclic cofibrations are retracts of maps in $J_{\mc{C}}$-cell.
   \item
The monoid axiom is trivially satisfied, if all objects in \mc{C} are cofibrant. 
\end{punkt}
\end{remark}

The monoid axiom is easily proved for the projective and the homotopy model structure.
We get into trouble to prove it for the $L$-stable model structure, where we have to introduce a lot of very technical conditions.
\begin{theorem}
If \mc{V} satisfies the monoid axiom, the projective model structure on $\mc{V}^{\mc{U}}$ satisfies the monoid axiom.
\end{theorem}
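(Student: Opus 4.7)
The plan is to reduce the monoid axiom for $\mc{V}^{\mc{U}}$ to the monoid axiom for $\mc{V}$ by an objectwise argument. Since every projectively acyclic cofibration in $\mc{V}^{\mc{U}}$ is a retract of a $J_{\mc{V}^{\mc{U}}}^{\mathrm{proj}}$-cell complex (by the generalized small object argument used to construct the projective structure in \ref{V + projective}) and since Day convolution with a fixed functor $\dgrm{Y}$ preserves colimits and retracts, it suffices to prove that every $(J_{\mc{V}^{\mc{U}}}^{\mathrm{proj}} \otimes \mathrm{Ob}(\mc{V}^{\mc{U}}))$-cell complex, computed via the Day convolution, is a projective weak equivalence.

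The first step is to compute the Day convolution of a generating acyclic cofibration with an arbitrary small functor explicitly. Fix $f = R^A \otimes j$ with $j\co C\to D$ in $J_{\mc{V}}$ and $A\in\mathrm{Ob}(\mc{U})$. Since $(R^A \otimes C)(B) = \mc{V}(A,B)\otimes C$, the coend formula for the left Kan extension defining the Day product, together with enriched Yoneda to integrate out one variable, furnishes a natural isomorphism
\[ (R^A \otimes C) \otimes \dgrm{Y} \cong C \otimes \dgrm{Y}_A, \]
where $\dgrm{Y}_A(K) := \int^B \mc{V}(A\otimes B, K)\otimes \dgrm{Y}(B)$ and the outer $\otimes$ on the right is the $\mc{V}$-tensor from \ref{V + small}. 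Naturally in $j$, this identifies $f \otimes \dgrm{Y}$ with $j \otimes \dgrm{Y}_A$, whose evaluation at any $K\in\mathrm{Ob}(\mc{U})$ is just the map $j \otimes \dgrm{Y}_A(K)$ in $\mc{V}$, an element of $J_{\mc{V}} \otimes \mathrm{Ob}(\mc{V})$.

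Next, since colimits in $\mc{V}^{\mc{U}}$ are computed objectwise by \ref{properties of smallness}, any transfinite composition of cobase changes of maps of the form $f \otimes \dgrm{Y}$, evaluated at a fixed $K$, becomes a transfinite composition of cobase changes in $\mc{V}$ of maps of the form $j \otimes \dgrm{Y}_A(K)$. By the monoid axiom in $\mc{V}$, each such objectwise map is a weak equivalence in $\mc{V}$. Since projective weak equivalences in $\mc{V}^{\mc{U}}$ are detected objectwise, the original cell complex is a projective weak equivalence, and the monoid axiom is established.

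The principal obstacle is setting up the coend calculation rigorously so that the isomorphism $(R^A \otimes C) \otimes \dgrm{Y} \cong C \otimes \dgrm{Y}_A$ is available; once this explicit form is in hand, the remainder is a mechanical evaluation-at-$K$ argument combined with the monoid axiom in $\mc{V}$. A secondary point to handle is that for non-small $\mc{U}$ the projective structure is only class-cofibrantly generated, but because the Day convolution preserves colimits in each variable, the reduction from arbitrary projectively acyclic cofibrations to the generating class $J_{\mc{V}^{\mc{U}}}^{\mathrm{proj}}$ proceeds precisely as in the cofibrantly generated case.
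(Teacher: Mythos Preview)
Your proof is correct and follows essentially the same route as the paper: reduce to the generating class $J_{\mc{V}^{\mc{U}}}^{\mathrm{proj}}=\{R^A\otimes j\}$, identify the Day convolution $(R^A\otimes j)\otimes\dgrm{Y}$ objectwise with a map in $J_{\mc{V}}\otimes\mathrm{Ob}(\mc{V})$, and then use that colimits in $\mc{V}^{\mc{U}}$ are computed objectwise to conclude via the monoid axiom in $\mc{V}$. Your treatment is in fact somewhat more careful than the paper's, which leaves the coend computation implicit; your explicit identification $(R^A\otimes C)\otimes\dgrm{Y}\cong C\otimes\dgrm{Y}_A$ is exactly the content of the comparison isomorphism in \ref{comparison} (where $\dgrm{Y}_A=\dgrm{Y}\circ R^A$ after left Kan extension), and your remark on the class-cofibrantly generated case is a point the paper glosses over.
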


\begin{proof}
We have to show, that the maps in the class 
   $$\{R^U\otimes J_{\mc{V}}|U\in{\rm Ob}(\mc{U}\}\otimes{\rm Ob}(\mc{V}^{\mc{U}}){\rm -cell}$$ 
are weak equivalences. We can check the monoid axiom for every \dgrm{X} in $\mc{V}^{\mc{U}}$ at a time. 
The functors $R^U$ are small in the sense, that mapping out of them commutes with filtered colimits, so compositions of objectwise weak equivalences always stay weak equi\-va\-lences. 
We can then check the axiom objectwise, that means, that the class above is equal to the following class: 
   $$\bigcup_{\dgrm{X}\in{\rm Ob}(\mc{V}^{\mc{U}})}\bigcup_{K\in{\rm Ob}(\mc{U})} \{R^U\otimes J_{\mc{V}}|U\in{\rm Ob}(\mc{U}\}\otimes(\dgrm{X}(K)){\rm -cell} $$ 
Now the monoid axiom in $\mc{V}^{\mc{U}}$ follows from the monoid axiom in \mc{V}.
\end{proof}

For the homotopy model structure we have to assume a small \mc{U}, because we will check the monoid axiom on the generating acyclic cofibrations and we do not know, whether the homotopy model structure for non-small \mc{U} is class-cofibrantly generated. 
\begin{theorem}
If \mc{V} is satisfies the monoid axiom and \mc{U} is small, the homotopy model structure on $\mc{V}^{\mc{U}}$ satisfies the monoid axiom.
\end{theorem}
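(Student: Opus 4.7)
The plan is to reduce the monoid axiom in the homotopy model structure on $\mc{V}^{\mc{U}}$ to the monoid axiom in $\mc{V}$ by evaluating at fibrant objects. Since $\mc{U}$ is small, Theorem \ref{generators for ho} gives the homotopy structure cofibrantly generated with generating acyclic cofibrations $J^{\rm ho}_{\mc{V}^{\mc{U}}} = J'' \cup J^{\rm proj}_{\mc{V}^{\mc{U}}}$. The maps in $\{J^{\rm proj}_{\mc{V}^{\mc{U}}} \otimes {\rm Ob}(\mc{V}^{\mc{U}})\}$-cell are projective weak equivalences by the previous theorem, and hence homotopy weak equivalences, so only cells built from $J''$ need attention.

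First I would reduce to tensoring with standard generators $R^U \otimes V$ for $U \in \mc{U}$ and $V \in \mc{V}$: every small functor is such a colimit by Remark \ref{properties of smallness}, the Day convolution preserves colimits in each variable, and homotopy weak equivalences are closed under the relevant filtered colimits because the fibrant replacement functor $\fibfun$ is small and filtered colimits preserve objectwise weak equivalences. A direct coend calculation then yields the isomorphism $j(f) \otimes R^U \cong j(f \otimes U)$, using $R^A \otimes R^U \cong R^{A \otimes U}$ and the closure of $\mc{U}$ under $\otimes$ from axiom \ref{U2}; the outer tensor with $V$ can be taken as the pointwise tensor from Definition \ref{V + small}. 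So, up to canonical isomorphism, the cells in question are built from pushout products $j(f') \,\square\, i$ tensored pointwise with $V$, with $f'$ still an acyclic cofibration between cofibrants in $\mc{U}$.

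Now by Lemma \ref{fib + we} it suffices to evaluate at an arbitrary fibrant $K \in \mc{U}$. Under evaluation, $j(f')$ turns into the mapping-cylinder inclusion $\mc{V}(B', K) \to \cyl(\mc{V}(f', K))$. For fibrant $K$ the map $\mc{V}(f', K)$ is a weak equivalence between fibrant objects of $\mc{V}$ (since $f'$ is an acyclic cofibration between cofibrants and $\mc{V}$ is a $\mc{V}$-model category), hence the cylinder inclusion is an acyclic cofibration in $\mc{V}$. Pushout product with $i \in I_{\mc{V}}$ preserves this by the pushout product axiom in $\mc{V}$. Therefore the evaluation at $K$ of our cell complex is a transfinite composition of cobase changes of maps in ${\rm acof}_{\mc{V}} \otimes {\rm Ob}(\mc{V})$, and so is a weak equivalence in $\mc{V}$ by the monoid axiom for $\mc{V}$.

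The main obstacle I anticipate is the careful juggling of the two tensor products present on $\mc{V}^{\mc{U}}$ — the Day convolution, which appears in the monoid axiom, and the pointwise $\mc{V}$-tensor from Definition \ref{V + small} — and producing the identification $j(f) \otimes (R^U \otimes V) \cong j(f \otimes U) \otimes V$ in a form compatible with pushouts, transfinite compositions, and objectwise evaluation. A secondary technical point is to spell out why the closure of the class of homotopy weak equivalences under the filtered colimits appearing in Step 1 really follows from the smallness of $\fibfun$, but this is routine.
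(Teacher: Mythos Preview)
Your strategy---evaluate at fibrant objects and reduce to the monoid axiom in \mc{V}---is exactly the paper's. The difference is that the paper never reduces the Day-convolution factor $X$ to a representable; it simply notes that $j(f)\,\square\,i$, restricted to the full subcategory $\mc{U}^{\fib}$ of fibrant objects, is an \emph{acyclic projective} cofibration, and then invokes the already--established projective monoid axiom on $\mc{V}^{\mc{U}^{\fib}}$ in one stroke. This swallows the Day tensor with an arbitrary $X$ without ever decomposing $X$.

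Your reduction step is where the real problem lies. The expression of a small functor as a colimit of objects $R^U\otimes V$ coming from Remark~\ref{properties of smallness} is a coend, i.e.\ a coequalizer of coproducts, not a filtered colimit. So the sentence ``homotopy weak equivalences are closed under the relevant filtered colimits'' does not apply, and the smallness of $\fibfun$ is no help here. Worse, even if you aim not for weak equivalences but for membership in $\{{\rm acof}_{\mc{V}}\otimes{\rm Ob}(\mc{V})\}\text{-cell}$ after evaluation, that class is closed under cobase change and transfinite composition but not under coequalizers, so the coend still does not let you pass from $X=R^U\otimes V$ to general $X$. This is the step you flagged as ``routine''; it is not.

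A smaller point: your assertion that for fibrant $K$ the evaluated cylinder inclusion $\mc{V}(B',K)\to\cyl(\mc{V}(f',K))$ is an acyclic \emph{cofibration} in \mc{V} uses that $\mc{V}(B',K)$ is cofibrant, which is not assumed (only objects of \mc{U} are cofibrant, not arbitrary cotensors in \mc{V}). What you do get for free is that $j(f')\,\square\,i$ is a projective cofibration and, on fibrant inputs, an objectwise weak equivalence---hence a projective acyclic cofibration after restriction to $\mc{U}^{\fib}$. That is precisely the observation the paper exploits, and it lets you skip both the reduction of $X$ and the pointwise cofibrancy issue.
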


\begin{proof}
If \mc{U} is small, the homotopy model structure is cofibrantly generated with the class $J_{\mc{V}^{\mc{U}}}^{\rm ho}=J''\cup J_{\mc{V}^{\mc{U}}}^{\rm proj}$ given in \ref{J^ho} as generating acyclic cofibrations. Since we already know, that the projective model structure satisfies the monoid axiom, it suffices to check, that the class 
   $$ J''\otimes{\rm Ob}(\mc{V}^{\mc{U}}){\rm -cell} $$
consists of weak equivalences in the homotopy model structure. So let $f\co A\to B$ be an acyclic cofibration in \mc{U} and factor the induced map $f^*\co R^B\to R^A$ as in \ref{J^ho}: 
\diagr{ R^B \ar[r]^-{j} & \cyl(f^*) \ar[r]^-{p} & R^A }
Let $i\co C\to D$ be a map in $I_{\mc{V}}$. 
The map
   $$ \left[(R^B\otimes D)\scup\limits_{(R^B\otimes C)}(\cyl(f^*)\otimes C)\right]\to\cyl(f^*)\otimes D  $$
is a projective cofibration and a weak equivalence, when we evaluate it on the fibrant objects of \mc{U}. So it is an acyclic projective cofibration on the full subcategory $\mc{U}^{\fib}$ of fibrant objects of \mc{U}. But then the monoid axiom for the projective structure on $\mc{V}^{\mc{U}^{\fib}}$ gives the result. 
\end{proof}

In order to prove the monoid axiom for the $L$-stable model structure we need some auxiliary facts and definitions.

\begin{definition}
Let \mc{M} be a cocomplete category and let $\Hom{\mc{M}}{\frei}{\frei}$ denote the set of morphisms in \mc{M}. An object $A$ in \mc{M} is {\it finitely presentable}, if the functor $\Hom{\mc{M}}{A}{\frei}$ commutes with all filtered colimits.

If \mc{M} is a \mc{V}-cocomplete \mc{V}-category, an object $A$ in \mc{M} is {\it \mc{V}-finitely presentable}, if the \mc{V}-Hom functor $R^A=\mc{M}(A,\frei)$ commutes with all filtered \mc{V}-colimits. 
\end{definition}

\begin{remark}
If the unit $S$ of \mc{V} is finitely presentable, then every \mc{V}-finitely presentable object is finitely presentable.
\end{remark}

\begin{lemma}[\cite{DRO:enriched} Lemma 3.5]\label{we closed under filtered colimits}
If sources and targets of the generating set $I_\mc{M}$ of cofibrations of \mc{M} are finitely presentable, the class of weak equivalences and the class of acyclic fibrations are closed under filtered colimits.
\end{lemma}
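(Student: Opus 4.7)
The plan is to handle the two closure assertions separately, with the weak equivalence case reducing to the acyclic fibration case via functorial factorization.

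For acyclic fibrations I would use their characterization as the maps with the right lifting property against $I_\mc{M}$. Given a filtered diagram $\{p_i\co X_i\to Y_i\}$ of acyclic fibrations and a lifting problem
\diagr{A \ar[r]\ar[d] & \colim_i X_i \ar[d]^{\colim_i p_i} \\ B \ar[r] & \colim_i Y_i}
with $A\to B$ in $I_\mc{M}$, the finite presentability of $A$ and $B$ lets me factor both horizontal maps through some finite stage; filteredness of the index category lets me move to a common stage $i_0$ where the square commutes strictly in $X_{i_0}\to Y_{i_0}$. A lift exists there because $p_{i_0}$ is an acyclic fibration, and composing with the canonical map into the colimit yields the desired lift for $\colim_i p_i$.

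For weak equivalences I apply the $I_\mc{M}$-small object argument functorially to each $f_i\co X_i\to Y_i$, producing $f_i=p_i\circ j_i$ with $j_i\co X_i\to Z_i$ a relative $I_\mc{M}$-cell complex and $p_i\co Z_i\to Y_i$ an acyclic fibration. Finite presentability of the sources and targets of $I_\mc{M}$ forces the small object argument to terminate at $\omega$, and its constituent operations (coproducts, pushouts, $\omega$-sequential composition) all commute with filtered colimits. Hence $\colim_i f_i=(\colim_i p_i)\circ(\colim_i j_i)$ is again of the same shape, with $\colim_i p_i$ an acyclic fibration by the first part. By 2-out-of-3, each $j_i$ is an acyclic cofibration, and the problem reduces to showing that $\colim_i j_i$ is a weak equivalence.

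This last step is the main obstacle. My plan is to commute the outer filtered colimit into the $\omega$-sequential composition $X_i=Z_i^{(0)}\to Z_i^{(1)}\to\cdots\to Z_i$ defining each $j_i$, presenting $\colim_i j_i$ as a transfinite composition of pushouts of coproducts of $I_\mc{M}$-maps, hence as a cofibration. To promote this to an acyclic cofibration I would factor $\colim_i j_i=q\circ r$ via a $J_\mc{M}$-small object argument with $r$ an acyclic cofibration and $q$ a fibration; by 2-out-of-3 it suffices to show $q$ is an acyclic fibration. For the latter I would test $q$ against $A\to B$ in $I_\mc{M}$: finite presentability of $A$ and $B$ reduces the lifting problem back to a compatible one at some finite stage, where the acyclic cofibration $j_{i_0}$ together with the fibration $q$ produces the required lift. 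A final application of 2-out-of-3 to $\colim_i f_i=(\colim_i p_i)\circ(\colim_i j_i)$ then delivers the weak equivalence closure.
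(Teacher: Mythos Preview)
Your treatment of acyclic fibrations is correct and is essentially the paper's argument, which rephrases the same idea as surjectivity of a map of ${\rm Hom}$-sets after commuting the filtered colimit past a pullback in the category of sets.

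The weak equivalence part, however, has a genuine gap in its final step. After factoring $\colim_i j_i = q\circ r$ with $r$ an acyclic cofibration and $q$ a fibration, you propose to test $q$ against a map $A\to B$ in $I_{\mc{M}}$ and ``reduce the lifting problem back to a compatible one at some finite stage''. But the domain $W$ of $q$ is produced by applying the $J_{\mc{M}}$-small object argument to the single map $\colim_i j_i$; it is \emph{not} a filtered colimit indexed by $i$, so there is no stage to which the given map $A\to W$ can be pushed back. Lifting $j_{i_0}$ against $q$ does produce a map $Z_{i_0}\to W$, and composing with $B\to Z_{i_0}$ yields a candidate $B\to W$ satisfying the lower triangle, but there is no reason for the upper triangle with the prescribed $A\to W$ to commute. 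More broadly, closure of acyclic cofibrations under filtered colimits in the arrow category is exactly the sort of statement one cannot extract from lifting properties alone, since individual lifts need not be compatible as $i$ varies.

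The paper avoids this by factoring the other way. Working in the projective model structure on the diagram category $\mc{M}^{\mc{H}}$, one factors the objectwise weak equivalence as a projective acyclic cofibration $g$ followed by an objectwise fibration $p$; by $2$-out-of-$3$ the map $p$ is then an objectwise acyclic fibration. Now $\colim\co\mc{M}^{\mc{H}}\to\mc{M}$ is left Quillen, since its right adjoint, the constant-diagram functor, visibly preserves objectwise fibrations and objectwise acyclic fibrations. Hence $\colim g$ is an acyclic cofibration with no further argument, and $\colim p$ is handled by your first part. Choosing the factorization so that the ``hard'' half is the acyclic cofibration rather than the acyclic fibration is precisely what makes the proof go through without your detour.
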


\begin{proof}
Let \mc{H} be a small index category for our colimit and equip $\mc{M}^{\mc{H}}$ with the projective model structure. Factor an objectwise weak equivalence into an acyclic projective cofibration $g$ followed by an objectwise acyclic fibration $p$. $\colim$ is a left Quillen functor, hence $\colim g$ is again an acyclic cofibration. The lemma follows, if we can prove, that $\colim p$ is an objectwise acyclic fibration for an objectwise acyclic fibration $p\co \dgrm{X}\to \dgrm{Y}$.
Consider a diagram
\diagr{ A \ar[r]\ar[d]_i & \colim\dgrm{X} \ar[d]^p \\ B \ar[r] & \colim\dgrm{Y} }
where $i\in I$. By adjointness the existence of a lift is equivalent to the surjectivity of the map
    $$ \phi\co\Hom{\mc{M}}{B}{\colim\dgrm{X}}\to\Hom{\mc{M}}{A}{\colim\dgrm{X}}\x\limits_{\Hom{\mc{M}}{A}{\colim\dgrm{Y}}}\Hom{\mc{M}}{B}{\colim\dgrm{Y}} $$
where $\Hom{\mc{M}}{\frei}{\frei}$ denotes the Hom-set of the category \mc{M}. By assumption $A$ and $B$ are finitely presentable, and filtered colimits commute with pullbacks in the category of sets. So we can squeeze the colimit out and consider the maps
    $$ \phi_h\co\Hom{\mc{M}}{B}{\dgrm{X}(h)}\to\Hom{\mc{M}}{A}{\colim\dgrm{X}(h)}\!\!\!\!\!\!\!\!\!\x\limits_{\Hom{\mc{M}}{A}{\colim\dgrm{Y}(h)}}\!\!\!\!\!\!\!\!\!\Hom{\mc{M}}{B}{\colim\dgrm{Y}(h)} $$
for each $h\in\mc{H}$. Now $\phi_h$ is surjective, since $i$ is a cofibration and $p(h)\co\dgrm{X}(h)\to\dgrm{Y}(h)$ is an acyclic fibration. But the colimit of surjective maps of sets is surjective.
\end{proof}

\begin{axiom}\label{U3}
We need two more assumptions on our categories \mc{U} and \mc{V}:
\begin{enumerate}\setrefstep{6}
    \item \label{7}
Every object in \mc{V} is a filtered colimit of objects in \mc{U}. 
    \item \label{8}
Every object of \mc{U} is \mc{V}-finitely presentable.
\end{enumerate}
\end{axiom}

Let $u\co\mc{U}\to\mc{V}$ be the inclusion functor. Then for any \mc{V}-model category \mc{M} we denote by $\LKan_u\co\mc{M}^{\mc{U}}\to\mc{M}^{\mc{V}}$ the \mc{V}-left Kan extension along $u$, compare \ref{LKan}.
\begin{lemma}[\cite{DRO:enriched} Lemma 4.9]\label{LKan preserves we}
Suppose that sources and targets in $I_\mc{M}$ are finitely presentable and that \mc{U} satisfies axioms $(7)$ and $(8)$. Then $\LKan_u$ preserves objectwise weak equivalences.
\end{lemma}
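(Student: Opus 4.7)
The plan is to compute $(\LKan_u\dgrm{X})(K)$ explicitly for an arbitrary $K\in\mc{V}$ and recognize it as a filtered colimit of values $\dgrm{X}(K_i)$ on objects $K_i\in\mc{U}$; then apply the previous lemma \ref{we closed under filtered colimits}.

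First I would use axiom \ref{U3}(\ref{7}) to pick, for each $K\in\mc{V}$, a filtered diagram $(K_i)_{i\in\mc{I}}$ in $\mc{U}$ with $K\cong\colim_i K_i$ (understood as a $\mc{V}$-enriched filtered colimit, so that axiom \ref{U3}(\ref{8}) will apply). Next I would write the $\mc{V}$-left Kan extension as the standard coend
\begin{equation*}
 (\LKan_u\dgrm{X})(K)\;\cong\;\int^{A\in\mc{U}}\mc{V}(A,K)\otimes\dgrm{X}(A).
\end{equation*}
Using axiom \ref{U3}(\ref{8}) the representable $\mc{V}(A,\frei)$ commutes with the filtered colimit $K\cong\colim_iK_i$, and since coends are themselves colimits they commute with the filtered colimit over $\mc{I}$ and with the tensor; this gives a natural isomorphism
\begin{equation*}
 (\LKan_u\dgrm{X})(K)\;\cong\;\colim_i\int^{A\in\mc{U}}\mc{V}(A,K_i)\otimes\dgrm{X}(A)\;\cong\;\colim_i(\LKan_u\dgrm{X})(K_i).
\end{equation*}
Because $u$ is a fully faithful inclusion, for $K_i\in\mc{U}$ the counit $(\LKan_u\dgrm{X})(K_i)\to\dgrm{X}(K_i)$ is an isomorphism, so altogether
\begin{equation*}
 (\LKan_u\dgrm{X})(K)\;\cong\;\colim_i\dgrm{X}(K_i),
\end{equation*}
naturally in $\dgrm{X}$.

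Now let $f\co\dgrm{X}\to\dgrm{Y}$ be an objectwise weak equivalence in $\mc{M}^{\mc{U}}$. By the previous paragraph, evaluated at $K$ the map $(\LKan_u f)(K)$ is naturally isomorphic to the filtered colimit $\colim_i f(K_i)$ of the weak equivalences $\dgrm{X}(K_i)\to\dgrm{Y}(K_i)$ (which are weak equivalences because $K_i\in\mc{U}$). By Lemma \ref{we closed under filtered colimits}, applicable because sources and targets of $I_{\mc{M}}$ are finitely presentable, filtered colimits of weak equivalences in $\mc{M}$ are weak equivalences, so $(\LKan_u f)(K)$ is a weak equivalence. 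Since $K\in\mc{V}$ was arbitrary, $\LKan_u f$ is an objectwise weak equivalence.

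The only subtle point, and the step I would check most carefully, is the interchange of the coend with the filtered colimit $K\cong\colim_iK_i$. It rests on two things: that axiom \ref{U3}(\ref{8}) is strong enough (it gives the needed commutation of the enriched hom with $\colim_iK_i$ for every $A\in\mc{U}$, which are exactly the objects appearing in the coend), and that the filtered colimit in axiom \ref{U3}(\ref{7}) is understood in the $\mc{V}$-enriched sense so that axiom \ref{U3}(\ref{8}) applies to it. Once this interchange is justified, everything else is formal.
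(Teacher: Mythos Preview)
Your proposal is correct and follows essentially the same route as the paper: write the enriched left Kan extension as a coend, use axiom~(7) to present $K$ as a filtered colimit of objects of $\mc{U}$, use axiom~(8) to pull the filtered colimit through $\mc{V}(A,\frei)$ and hence through the coend, simplify via co-Yoneda (your full-faithfulness remark), and conclude with Lemma~\ref{we closed under filtered colimits}. The paper's computation is slightly more compressed (it applies the coend formula to the map $f$ directly rather than first to $\dgrm{X}$), but the argument is the same.
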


\begin{proof}
Let $f\co\dgrm{X}\to\dgrm{Y}$ be an objectwise weak equivalence in $\mc{M}^{\mc{U}}$ and let $V\in\mc{V}$. Since \dgrm{X} is small, we can write:
    $$ \LKan_uf(V)=\int^{K\in\mc{U}}R^K(V)\otimes f(K)\co\dgrm{X}(V)\to\dgrm{Y}(V)$$
By axiom (\ref{7}) it is a filtered colimit of objects in \mc{U}, so there exists a functor $C\co\mc{H}\to\mc{V}$, such that \mc{H} is filtered and $\colim_{h\in\mc{H}}C(h)\cong V$. We have with (\ref{8}):
\begin{align*}
    \LKan_uf(V)&=\int^{K\in\mc{U}}R^K(V)\otimes f(K) \\
               &\cong \int^{K\in\mc{U}}\mc{V}(K,\colim_{h\in\mc{H}}C(h))\otimes f(K) \\
               &\cong \colim_{h\in\mc{H}}\int^{K\in\mc{U}}\mc{V}(K,C(h))\otimes f(K) \\
               &\cong \colim_{h\in\mc{H}}f(C(h))
\end{align*}
So $\LKan_uf(V)$ is a colimit of weak equivalence and hence itself a weak equivalence by \ref{we closed under filtered colimits}.
\end{proof}

Curiously the following easy consequence does not appear in \cite{DRO:enriched}.
\begin{corollary}\label{LKan preserves L-we}
Suppose that sources and targets in $I_\mc{M}$ are finitely presentable and that \mc{U} satisfies axioms $(7)$ and $(8)$. Then $\LKan_u$ preserves $L$-stable equivalences.
\end{corollary}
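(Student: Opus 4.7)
The plan is to factor an arbitrary $L$-stable equivalence $f\co\dgrm{X}\to\dgrm{Y}$ in $\mc{M}^{\mc{U}}$ as $f=p\circ j$ inside the $L$-stable model structure, where $j$ is an $L$-stable acyclic cofibration and $p$ is an $L$-stable fibration. Such a factorization exists by Theorem \ref{V + L-stable} together with the generalized small object argument of \cite{Chorny:general} employed throughout the paper (functoriality of the factorization is not needed here). Two-out-of-three then forces $p$ to be an $L$-stable acyclic fibration, and since cofibrations are the projective ones in every localization we have built, $p$ is in fact an objectwise acyclic fibration. We treat the two pieces separately.

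For the cofibration piece $j$, Lemma \ref{LKan left Quillen} tells us that $(\LKan_u,u^*)$ is a $\mc{V}$-Quillen pair for the $L$-stable model structures on $\mc{M}^{\mc{U}}$ and $\mc{M}^{\mc{V}}$, so $\LKan_u j$ is an $L$-stable acyclic cofibration in $\mc{M}^{\mc{V}}$ and in particular an $L$-stable equivalence. For the fibration piece $p$, the map is an objectwise weak equivalence; here the hypotheses of the corollary ensure, via Lemma \ref{LKan preserves we}, that $\LKan_u p$ is again an objectwise weak equivalence in $\mc{M}^{\mc{V}}$, which by the definition of the $L$-stable model structure is an $L$-stable equivalence. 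Composing gives that $\LKan_u f=\LKan_u p\circ\LKan_u j$ is an $L$-stable equivalence, completing the argument.

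There is essentially no hard step in this sketch once one recognises the trick of factoring in the \emph{localized} model structure rather than in the projective one; the two mild points to verify are the existence of the (not necessarily functorial) factorization for possibly non-small $\mc{U}$ and the identification of acyclic fibrations across the Bousfield--Friedlander localizations. Both are immediate from the general setup: the generalized small object argument supplies factorizations, and Theorem \ref{thm:bousfield-machine} together with the fact that cofibrations are preserved under all the localizations used here gives the coincidence of acyclic fibrations with projective ones. Thus the only genuine input one must import from the context is Lemma \ref{LKan preserves we}, whose finite-presentability hypotheses are reflected verbatim in the statement of the corollary.
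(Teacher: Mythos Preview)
Your proof is correct and follows essentially the same approach as the paper: factor the $L$-stable equivalence in the $L$-stable model structure into an acyclic cofibration followed by an objectwise acyclic fibration, then use that $\LKan_u$ is left Quillen (Lemma \ref{LKan left Quillen}) on the first piece and Lemma \ref{LKan preserves we} on the second. The only minor quibble is that the factorization here is supplied by the Bousfield--Friedlander machinery of Theorem \ref{thm:bousfield-machine} rather than directly by the generalized small object argument, but this does not affect the argument.
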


\begin{proof}
By \ref{LKan left Quillen} the functor $\LKan_u$ is a left Quillen functor from $\mc{M}^{\mc{U}}$ to $\mc{M}^{\mc{V}}$, where both categories carry the $L$-stable model structure. If $f$ is an $L$-stable equivalence, we factor it into an $L$-stably acyclic cofibration $i$ followed by an objectwise acyclic fibration $p$. Then $\LKan_ui$ is an $L$-stably acyclic cofibration, since $\LKan_u$ is a left Quillen functor, and $\LKan_up$ is an objectwise equivalence by \ref{LKan preserves we}.
\end{proof}

\begin{definition}[\cite{DRO:enriched} Def. 4.6]
A monoidal model category \mc{V} is {\it strongly left proper} if the cobase change of a weak equivalence along any map in $\cof\otimes{\rm Ob}(\mc{V})$-cell is a weak equivalence. Here $\cof$ is the class of cofibrations in \mc{V}.
\end{definition}

\begin{remark}
A strongly left proper monoidal model category is left proper. If a monoidal model category has only cofibrant objects, it is strongly left proper.
\end{remark}

\begin{lemma}\label{gluing lemma}
A monoidal model category \mc{V} is strongly left proper if and only if the following gluing lemma holds: Consider a diagram
\diagr{  D\otimes X \ar[d] & C\otimes X  \ar[d]\ar[l]\ar[r] & S  \ar[d] \\
         D\otimes Y & C\otimes Y  \ar[l]\ar[r] & T }
where all the vertical maps are weak equivalences and the two left hand vertical maps are maps in $\cof\otimes{\rm Ob}(\mc{V})$. Then the induced vertical map on the pushout is a weak equivalence.
\end{lemma}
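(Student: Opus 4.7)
The plan is to prove the two implications separately.

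For the ``gluing lemma implies strongly left proper'' direction, given a weak equivalence $f\co A\to B$ and a map $j\co A\to A'$ in $\cof\otimes\mathrm{Ob}(\mc{V})$, I would apply the gluing lemma to the diagram whose top span is $A'\xleftarrow{j}A\xrightarrow{\id_A}A$ and whose bottom span is $A'\xleftarrow{j}A\xrightarrow{f}B$, with vertical maps $\id_{A'}$, $\id_A$, and $f$. The left two vertical identities lie trivially in $\cof\otimes\mathrm{Ob}$ (take $r=\id$ tensored with the unit), and all three vertical maps are weak equivalences. The induced map on pushouts is $A'\to A'\sqcup_A B$, which is precisely the cobase change of $f$ along $j$, and the gluing lemma declares it a weak equivalence. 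To extend from single cells in $\cof\otimes\mathrm{Ob}$ to the full cell class $\cof\otimes\mathrm{Ob}$-cell, I would induct on the cell structure, invoking closure of weak equivalences under cobase change (just established) and under (suitably small) transfinite composition, via \ref{we closed under filtered colimits} or standard cofibrantly generated arguments.

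For the forward direction, assuming strong left properness, the plan is to factor the induced map $P_1\to P_2$ between pushouts $P_1=D\otimes X\sqcup_{C\otimes X}S$ and $P_2=D\otimes Y\sqcup_{C\otimes Y}T$ as a finite composite of cobase changes, each fitting the shape demanded by strong left properness. A natural route passes through the three-fold pushout $W=D\otimes X\sqcup_{C\otimes X}(C\otimes Y)\sqcup_{C\otimes X}S$, obtained from $P_1$ as $P_1\sqcup_{D\otimes X}Z$ for $Z=D\otimes X\sqcup_{C\otimes X}C\otimes Y$. The map $D\otimes X\to Z$ is the cobase change of $v_1\in\cof\otimes\mathrm{Ob}$ along the horizontal $g_1\co C\otimes X\to D\otimes X$, and hence lies in $\cof\otimes\mathrm{Ob}$-cell since the cell class is closed under cobase change; analogously, the maps $S\to N:=C\otimes Y\sqcup_{C\otimes X}S$, $C\otimes Y\to N$, etc., propagate the $\cof\otimes\mathrm{Ob}$-cell property. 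At each step one identifies the intermediate as a cobase change of a weak equivalence along a $\cof\otimes\mathrm{Ob}$-cell map, so that strong left properness applies directly. The comparison of $W$ with $P_2$ then exploits the remaining weak equivalence $v_2\co S\to T$ together with a $2$-out-of-$3$ argument to transport equivalences across the final pushout.

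The main obstacle is the forward direction: every intermediate step must genuinely fit the asymmetric pattern of strong left properness (a weak equivalence paired with a $\cof\otimes\mathrm{Ob}$-cell map sharing the same source), and the handling of the $v_2$ factor is delicate because $v_2$ is not itself presumed to lie in $\cof\otimes\mathrm{Ob}$-cell. The anticipated resolution is to first establish that $S\to N$ is in $\cof\otimes\mathrm{Ob}$-cell as a cobase change of $v_1$, then use the factorization $v_2\co S\to N\to T$ and $2$-out-of-$3$ to deduce that $N\to T$ is itself a weak equivalence, and finally invoke strong left properness once more to pass from $N$ to $T$ inside the ambient pushout.
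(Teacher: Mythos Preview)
The paper does not actually prove this lemma; it says the argument is ``analogous to the proof that left properness is equivalent to the statement that the pushout is a weak equivalence if the two left hand horizontal maps are cofibrations'' and leaves it to the reader. Your plan follows exactly that template---derive one direction by specializing the diagram, and the other by factoring the map on pushouts through intermediate pushouts---so in spirit you match the paper.

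Two comments. First, the statement almost certainly contains a typo: the hypothesis should be on the two left \emph{horizontal} maps, not the vertical ones. This is clear from how the lemma is applied in the proof of Lemma~\ref{tensoring preserves obj we}, where the author explicitly says ``the two left hand horizontal maps \ldots\ are \ldots\ in the class $\cof\otimes{\rm Ob}(\mc{V})$'' before invoking the gluing lemma. Your forward-direction sketch leans on the literal reading (you use $v_1\in\cof\otimes{\rm Ob}$ for a vertical map to get $D\otimes X\to Z$ into the cell class), and with the corrected hypothesis that step no longer goes through as written. Your backward direction is unaffected, since your chosen vertical identities are in the class either way and the horizontal map is $j\in\cof\otimes{\rm Ob}$ by hypothesis.

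Second, with the corrected reading the classical argument runs like this: the maps $S\to P_1$ and $T\to P_2$ are cobase changes of the left horizontals and hence lie in $\cof\otimes{\rm Ob}$-cell; form $Q=T\sqcup_S P_1$ so that $P_1\to Q$ is a weak equivalence by strong left properness; rewrite $Q$ as $A'\sqcup_{C\otimes Y}T$ with $A'=D\otimes X\sqcup_{C\otimes X}C\otimes Y$, note $C\otimes Y\to A'$ is in the cell class and $A'\to D\otimes Y$ is a weak equivalence (strong left properness plus 2-out-of-3); then compare $Q$ with $P_2=D\otimes Y\sqcup_{C\otimes Y}T$. Your outline is headed in this direction, but the particular intermediates $W$ and $Z$ you wrote down are tied to the wrong hypothesis and the ``cobase change of a weak equivalence along a cell map'' claim is asserted rather than verified at each step. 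The idea is right; the execution needs to be redone against the corrected statement so that at every stage the map you push out along genuinely lies in $\cof\otimes{\rm Ob}$-cell.
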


The proof is analogous to the proof, that left properness is equivalent to the statement, that the pushout is a weak equivalence if the two left hand vertical maps are cofibrations. It is left to the reader.

\begin{remark}\label{comparison}
For a functor \dgrm{X} in $\mc{V}^{\mc{U}}$ and an object $A$ in \mc{U} there is a comparison isomorphism
    $$ \dgrm{X}\otimes R^A\cong\dgrm{X}\circ R^A $$
constructed in the following way:
\begin{align*}
    \dgrm{X}\otimes R^A&\cong\int^{K\in\mc{K}}R^{K\otimes A}\otimes\dgrm{X}(K)\cong\int^{K\in\mc{K}}(R^{K}\circ R^A)\otimes\dgrm{X}(K) \\
                       &\cong\dgrm{X}\circ R^A 
\end{align*}
Here \mc{K} is a defining subcategory for \dgrm{X} and we use, that the monoidal product commutes with \mc{V}-colimits.
\end{remark}

\begin{lemma}[\cite{DRO:enriched} Theorem 4.11]\label{tensoring preserves obj we}
Let \mc{V} be right proper and cofibrantly generated, such that the sources and targets of the maps in $I_{\mc{V}}$ are finitely presentable and that tensoring with them preserves weak equivalences in \mc{V}.
We assume further, that \mc{V} is strongly left proper and satisfies the monoid axiom. 
Let axioms $(7)$ and $(8)$ from \emph{\ref{U3}} be satisfied.
Then tensoring with a projectively cofibrant functor preserves objectwise weak equivalences. 
\end{lemma}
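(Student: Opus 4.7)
The plan is to perform a transfinite induction on the cellular structure of the projectively cofibrant functor $\dgrm{W}$, invoking the gluing lemma \ref{gluing lemma} at successor stages and closure of weak equivalences under filtered colimits at limit stages. Since $\mc{V}^{\mc{U}}$ is class-cofibrantly generated by maps of the form $R^A \otimes i$ with $A \in \mc{U}$ and $i \in I_{\mc{V}}$, any projectively cofibrant $\dgrm{W}$ is a retract of a relative cell complex built from these generators; retracts of objectwise weak equivalences are objectwise weak equivalences, so I may assume $\dgrm{W}$ is itself obtained from a transfinite composition $\emptyset = \dgrm{W}_0 \to \dgrm{W}_1 \to \cdots$ in which each $\dgrm{W}_{\alpha+1}$ is the pushout of $\dgrm{W}_\alpha \leftarrow R^{A_\alpha} \otimes C_\alpha \xrightarrow{R^{A_\alpha} \otimes i_\alpha} R^{A_\alpha} \otimes D_\alpha$ with $i_\alpha \colon C_\alpha \to D_\alpha$ in $I_{\mc{V}}$ and $A_\alpha \in \mc{U}$. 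Given an objectwise weak equivalence $f \colon \dgrm{X} \to \dgrm{Y}$, I prove by induction on $\alpha$ that $f \otimes \id_{\dgrm{W}_\alpha}$ is an objectwise weak equivalence. The base case is trivial, and at a limit ordinal I use that $\otimes$ is a left adjoint (Lemma \ref{adjunctions}) and so commutes with the filtered colimit defining $\dgrm{W}_\lambda$; combined with the fact that colimits in $\mc{V}^{\mc{U}}$ are computed objectwise, the limit step follows from Lemma \ref{we closed under filtered colimits}, whose finite-presentability hypothesis on sources and targets in $I_{\mc{V}}$ is part of our assumptions.

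The heart of the argument is the successor step. Fixing $V \in \mc{U}$ and combining the comparison isomorphism of Remark \ref{comparison} with Definition \ref{V + small} and the extension-by-Kan convention \ref{LKanii}, I identify $(\dgrm{X} \otimes R^{A_\alpha} \otimes C_\alpha)(V) \cong \dgrm{X}(V^{A_\alpha}) \otimes C_\alpha$, and similarly with $D_\alpha$ or $\dgrm{Y}$ in place of $C_\alpha$ or $\dgrm{X}$. Evaluating at $V$ the pushouts defining $\dgrm{X} \otimes \dgrm{W}_{\alpha+1}$ and $\dgrm{Y} \otimes \dgrm{W}_{\alpha+1}$ therefore produces exactly the configuration demanded by the gluing lemma in $\mc{V}$: the left-hand horizontal maps are $\id \otimes i_\alpha$ and lie in $\cof \otimes {\rm Ob}(\mc{V})$ (up to symmetry); the right-hand vertical map is a weak equivalence by induction; and the left-hand vertical maps are weak equivalences because Lemma \ref{LKan preserves we} provides a weak equivalence $\dgrm{X}(V^{A_\alpha}) \to \dgrm{Y}(V^{A_\alpha})$, and tensoring with the source $C_\alpha$ or target $D_\alpha$ of a map in $I_{\mc{V}}$ preserves weak equivalences by hypothesis. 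The strong left properness of $\mc{V}$, in the form of Lemma \ref{gluing lemma}, now delivers that $(f \otimes \id_{\dgrm{W}_{\alpha+1}})(V)$ is a weak equivalence, completing the induction.

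I expect the main obstacle to lie in the successor step, specifically in verifying that the various identifications at $V$ are natural in $V$ and genuinely commute with the pushouts in question; this ultimately depends on colimits and tensors in $\mc{V}^{\mc{U}}$ being computed objectwise together with the comparison isomorphism of Remark \ref{comparison}. Each technical hypothesis of the theorem enters precisely here: strong left properness powers the gluing lemma, finite presentability of sources and targets in $I_{\mc{V}}$ is needed at the limit step via Lemma \ref{we closed under filtered colimits}, and the hypothesis that tensoring with those sources and targets preserves weak equivalences is what makes the left-hand vertical comparison maps into weak equivalences.
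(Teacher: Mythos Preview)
Your proof is correct and follows essentially the same strategy as the paper: reduce to cell complexes via the retract argument, handle generating cells using the comparison isomorphism of Remark~\ref{comparison} together with Lemma~\ref{LKan preserves we} and the hypothesis on tensoring with sources and targets of $I_{\mc{V}}$, push through successor stages via the gluing lemma~\ref{gluing lemma} (strong left properness), and treat limit stages by Lemma~\ref{we closed under filtered colimits}. The only cosmetic difference is that the paper attaches coproducts of cells at each stage and phrases the transfinite part as a reduction to finite compositions, while you attach one cell at a time and run a genuine transfinite induction; these are equivalent.
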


\begin{proof}
Let $\dgrm{X}\to\dgrm{Y}$ be an objectwise weak equivalence. Using the comparison map from \ref{comparison} we have for every $K$ in \mc{U} and $M$ in \mc{M} the following diagram:
\diagr{ \dgrm{X}\otimes(R^K\otimes M) \ar[r]\ar[d]_{\cong} & \dgrm{Y}\otimes(R^K\otimes M) \ar[d]_{\cong} \\
        \dgrm{X}(\mc{V}(K,\frei))\otimes M \ar[r] & \dgrm{Y}(\mc{V}(K,\frei))\otimes M }
The map $\dgrm{X}(\mc{V}(K,\frei))\to\dgrm{Y}(\mc{V}(K,\frei))$ is an objectwise weak equivalence by \ref{LKan preserves we}. If $M$ is a source or target of a map in $I_{\mc{V}}$, the lower horizontal map is a weak equivalence by assumption and so is the upper.

Now let \dgrm{E} be a projectively cofibrant functor. It is a consequence of the small object argument, that \dgrm{E} is a retract of the class $I_{\mc{M}^{\mc{U}}}^{\rm proj}$-cell. Further we know by \ref{we closed under filtered colimits}, that objectwise weak equivalences are closed under transfinite compositions. So we can reduce to the case, where \dgrm{E} is a finite composition of cobase changes of $I_{\mc{M}^{\mc{U}}}^{\rm proj}$. Here we can proceed by induction, where the induction start is explained above. For the induction step we have to look at the following diagram:
\diagr{ \bigsqcup_{m}(R^{A_m}\otimes D_m)\otimes\dgrm{X} \ar[d] & \bigsqcup_{m}(R^{A_m}\otimes C_m)\otimes\dgrm{X} \ar[d]\ar[l]\ar[r] & \dgrm{E}_n\otimes\dgrm{X} \ar[d] \\
        \bigsqcup_{m}(R^{A_m}\otimes C_m)\otimes\dgrm{Y} & \bigsqcup_{m}(R^{A_m}\otimes C_m)\otimes\dgrm{Y} \ar[l]\ar[r] & \dgrm{E}_n\otimes\dgrm{Y}  }
The right vertical map is an objectwise weak equivalence by the induction assumption. So are the other two vertical maps by the previous paragraph. 

The two left hand horizontal maps are not necessarily objectwise cofibrations, but they are coproducts of maps, that are objectwise in the class $\cof\otimes{\rm Ob}(\mc{V})$. Now, pushouts in the functor category are computed objectwise.
So by the gluing lemma \ref{gluing lemma}, which holds due to the strong left properness of \mc{V}, the induced vertical map on the pushout $\dgrm{E}_{n+1}\otimes\dgrm{X}\to\dgrm{E}_{n+1}\otimes\dgrm{Y}$, which is computed objectwise, is an objectwise weak equivalence.
\end{proof}

\begin{lemma}\label{tensoring with proj. cof. preserves L-we}
Let \mc{V} and \mc{U} be as in \emph{\ref{tensoring preserves obj we}}.
Tensoring with a projectively cofibrant object preserves $L$-stable equivalences. 
\end{lemma}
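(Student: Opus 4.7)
The plan is to factor the given $L$-stable equivalence through the projective model structure and reduce to two pieces, each of which is handled by a previously established result.

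Let $f\co\dgrm{X}\to\dgrm{Y}$ be an $L$-stable equivalence and let $\dgrm{E}$ be projectively cofibrant. Using the projective model structure on $\mc{V}^{\mc{U}}$, factor $f=p\circ i$, where $i\co\dgrm{X}\to\dgrm{Z}$ is a projective cofibration and $p\co\dgrm{Z}\to\dgrm{Y}$ is a projective acyclic fibration. Since $p$ is in particular an objectwise weak equivalence, it is an $L$-stable equivalence, and by 2-out-of-3 for $L$-stable equivalences the map $i$ is an $L$-stably acyclic projective cofibration. We then decompose the tensored map as $\dgrm{E}\otimes f=(\dgrm{E}\otimes p)\circ(\dgrm{E}\otimes i)$ and treat each factor separately.

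The factor $\dgrm{E}\otimes p$ is an objectwise weak equivalence by the previous lemma \ref{tensoring preserves obj we}, hence in particular an $L$-stable equivalence. For the factor $\dgrm{E}\otimes i$ we invoke \ref{V^V symmetric monoidal}, which asserts that $\mc{V}^{\mc{U}}$ equipped with the $L$-stable model structure is a closed symmetric monoidal model category. Since the cofibrations in the $L$-stable structure coincide with the projective cofibrations (localization never changes cofibrations), $\dgrm{E}$ is cofibrant in the $L$-stable model structure, and the pushout-product axiom applied to $\emptyset\to\dgrm{E}$ and $i$ shows that $\dgrm{E}\otimes i$ is an $L$-stably acyclic cofibration, in particular an $L$-stable equivalence. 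A final application of 2-out-of-3 yields that $\dgrm{E}\otimes f$ is an $L$-stable equivalence.

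There is no real obstacle here: once one notices that the previous lemma \ref{tensoring preserves obj we} handles the objectwise-equivalence half and that the symmetric monoidal model structure of \ref{V^V symmetric monoidal} handles the $L$-stably acyclic cofibration half, the result is an immediate assembly via factorization and 2-out-of-3. The only point worth being careful about is that the factorization must be taken in the projective model structure (so that $p$ is an \emph{objectwise} acyclic fibration, enabling the appeal to \ref{tensoring preserves obj we}), while the pushout-product axiom must be applied in the $L$-stable model structure on $\mc{V}^{\mc{U}}$.
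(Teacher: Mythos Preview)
Your proof is correct and follows essentially the same approach as the paper: factor the $L$-stable equivalence as an $L$-stably acyclic cofibration followed by an objectwise acyclic fibration, then handle the first piece via \ref{V^V symmetric monoidal} and the second via \ref{tensoring preserves obj we}. The only cosmetic difference is that the paper factors directly in the $L$-stable model structure (so the second map is an $L$-stably acyclic fibration, which coincides with an objectwise acyclic fibration), whereas you factor in the projective structure and then use 2-out-of-3 to see the first map is $L$-stably acyclic; the resulting decomposition is the same.
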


\begin{proof}
We factor the $L$-stable equivalence into an $L$-stably acyclic cofibration followed by an $L$-stably acyclic fibration, which is just an objectwise acyclic fibration. Tensoring with the first map remains an $L$-stable equivalence because of the compatibility result in \ref{V^V symmetric monoidal}. Tensoring with the second map remains an objectwise equivalence by \ref{tensoring preserves obj we}.
\end{proof}

\begin{theorem}\label{monoid axiom}
Let \mc{V} and \mc{U} be as in \emph{\ref{tensoring preserves obj we}}. Then the $L$-stable model structure on $\mc{V}^{\mc{U}}$ satisfies the monoid axiom.
\end{theorem}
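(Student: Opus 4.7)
The plan is to verify the monoid axiom on a generating set of $L$-stable acyclic cofibrations, mimicking the proofs for the projective and homotopy model structures but now feeding in the two preservation lemmas \ref{tensoring preserves obj we} and \ref{tensoring with proj. cof. preserves L-we}. Throughout I tacitly assume $\mc{U}$ is small, so that the $L$-stable model structure on $\mc{V}^{\mc{U}}$ is cofibrantly generated by the set $J^{L{\rm -stable}}_{\mc{V}^{\mc{U}}}=J'''\cup J^{\rm ho}_{\mc{V}^{\mc{U}}}$ of \ref{J^lin}. The cellular closure contribution from $J^{\rm ho}_{\mc{V}^{\mc{U}}}$ is already under control, because the previous theorem establishes the monoid axiom for the homotopy model structure and every homotopy equivalence is an $L$-stable equivalence. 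So the genuine work concerns only the new generators in $J'''$.

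Inspection of definitions \ref{J^ho} and \ref{J^lin} shows that every generator $j\co\dgrm{A}\to\dgrm{B}$ in $J^{L{\rm -stable}}_{\mc{V}^{\mc{U}}}$ is a projective cofibration whose source and target are projectively cofibrant, being assembled by pushout products from representables (cofibrant by \ref{rep. cof}), cofibrations in $\mc{V}$, and $\mc{V}$-mapping cylinders. For such a $j$ and an arbitrary object $\dgrm{X}$ of $\mc{V}^{\mc{U}}$, take a projective cofibrant replacement $q\co Q\dgrm{X}\to\dgrm{X}$ and consider the commutative square
\diagr{
\dgrm{A}\otimes Q\dgrm{X} \ar[r]^-{j\otimes Q\dgrm{X}} \ar[d]_-{\dgrm{A}\otimes q} & \dgrm{B}\otimes Q\dgrm{X} \ar[d]^-{\dgrm{B}\otimes q} \\
\dgrm{A}\otimes\dgrm{X} \ar[r]_-{j\otimes\dgrm{X}} & \dgrm{B}\otimes\dgrm{X}
}
By \ref{tensoring with proj. cof. preserves L-we}, tensoring with the projectively cofibrant $Q\dgrm{X}$ preserves the $L$-stable equivalence $j$, so the top map is an $L$-stable equivalence. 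By \ref{tensoring preserves obj we}, tensoring with the projectively cofibrant $\dgrm{A}$ and $\dgrm{B}$ preserves the objectwise equivalence $q$, so both vertical maps are objectwise, and in particular $L$-stable, equivalences. Two-out-of-three then gives that $j\otimes\dgrm{X}$ is an $L$-stable equivalence.

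To extend from the generators to the full class $\{J^{L{\rm -stable}}_{\mc{V}^{\mc{U}}}\otimes{\rm Ob}(\mc{V}^{\mc{U}})\}$-cell, I would compare each cellular attachment along $j\otimes\dgrm{X}$ with the parallel attachment along $j\otimes Q\dgrm{X}$. The latter map is the pushout product of the cofibration $j$ with $\emptyset\to Q\dgrm{X}$, hence an acyclic projective cofibration, so its cobase changes are automatically acyclic projective cofibrations, and in particular $L$-stable equivalences. The comparison between the two cobase-change squares is controlled by a gluing principle in the spirit of \ref{gluing lemma}, backed up by the strong left properness of $\mc{V}$. Transfinite compositions then cause no further trouble because $L$-stable equivalences are closed under filtered colimits, a consequence of \ref{we closed under filtered colimits} and the fact from \ref{about P}(ii) that $P$ commutes with filtered homotopy colimits.

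The principal obstacle is precisely this last comparison step: the vertical comparison maps $\dgrm{A}\otimes Q\dgrm{X}\to\dgrm{A}\otimes\dgrm{X}$ and $\dgrm{B}\otimes Q\dgrm{X}\to\dgrm{B}\otimes\dgrm{X}$ are of the form (cofibrant object) tensored with (weak equivalence) rather than (cofibration) tensored with (object), so \ref{gluing lemma} does not apply verbatim. One must either strengthen that lemma mildly or extract the needed propagation directly from strong left properness of $\mc{V}^{\mc{U}}$, which is itself inherited levelwise from that of $\mc{V}$. This is the technical heart of the DRO argument, which is where the axioms \ref{U3} on $\mc{U}$ enter through the finite presentability assumption underlying \ref{tensoring preserves obj we}.
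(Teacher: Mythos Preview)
Your first step, showing that $j\otimes\dgrm{X}$ is an $L$-stable equivalence for each generator $j$ and arbitrary $\dgrm{X}$ via a projective cofibrant replacement of $\dgrm{X}$ and the lemmas \ref{tensoring preserves obj we} and \ref{tensoring with proj. cof. preserves L-we}, is exactly the paper's argument.

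For the cobase-change step the paper takes a different and more direct route that avoids the gap you flag. Rather than running a parallel cellular tower along $j\otimes Q\dgrm{X}$ and trying to compare the two towers, the paper factors the \emph{attaching map} $f\co\dgrm{Q}\otimes\dgrm{X}\to\dgrm{Y}$ as a projective cofibration $i\co\dgrm{Q}\otimes\dgrm{X}\to\dgrm{E}$ followed by an objectwise acyclic fibration $p\co\dgrm{E}\to\dgrm{Y}$, and pushes $j\otimes\dgrm{X}$ out in two stages:
\diagr{ \dgrm{Q}\otimes\dgrm{X} \ar[r]^-{i}\ar[d]\ar@{}[dr]|->>>{\pushout} & \dgrm{E} \ar[r]^-{p}\ar[d]\ar@{}[dr]|->>>{\pushout} & \dgrm{Y} \ar[d] \\
        \dgrm{R}\otimes\dgrm{X} \ar[r] & \dgrm{P}_1 \ar[r] & \dgrm{P}_2 }
The first square is handled by left properness of the $L$-stable structure (available since strongly left proper implies left proper): the $L$-stable equivalence $j\otimes\dgrm{X}$ is pushed along the genuine cofibration $i$, so $\dgrm{E}\to\dgrm{P}_1$ is an $L$-stable equivalence. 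For the second square, the key observation is that $\dgrm{E}\to\dgrm{P}_1$ is \emph{objectwise} in $\cof\otimes{\rm Ob}(\mc{V})$-cell, because $j$ is a projective cofibration and tensoring with $\dgrm{X}$ lands objectwise in $\cof\otimes{\rm Ob}(\mc{V})$, a class closed under cobase change. Strong left properness of $\mc{V}$ then says directly that the objectwise weak equivalence $p$ pushes out to an objectwise weak equivalence $\dgrm{P}_1\to\dgrm{P}_2$. Two-out-of-three gives that $\dgrm{Y}\to\dgrm{P}_2$ is an $L$-stable equivalence.

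This factorization trick sidesteps your obstacle entirely: there is no comparison of parallel towers, and \ref{gluing lemma} is never invoked with maps of the wrong shape. Strong left properness enters in exactly one place, applied objectwise to a single cobase change. Your route through a parallel cofibrant-replacement tower might be made to work, but it would require an inductive comparison argument that is strictly more delicate than what the paper does; the paper handles each attachment in isolation and never needs to keep a global replacement in play.
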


\begin{proof}
We first prove, that tensoring a map in $J^{L{\rm -stable}}_{\mc{M}^{\mc{U}}}= J'''\cup J_{\mc{M}^{\mc{U}}}^{\rm ho}$ described in \ref{J^lin} with an arbitrary functor \dgrm{X} in $\mc{V}^{\mc{U}}$ remains an $L$-stable equivalence, and it suffices to do this for $J'''$. Let $\alpha\co\dgrm{A}\to\dgrm{X}$ be a projectively cofibrant replacement for \dgrm{X}. Let $j\co\dgrm{Q}\to\dgrm{R}$ be a map in $J'''$. We have a diagram:
\diagr{ \dgrm{Q}\otimes\dgrm{A} \ar[r]\ar[d] & \dgrm{Q}\otimes\dgrm{X} \ar[d] \\
        \dgrm{R}\otimes\dgrm{A} \ar[r] & \dgrm{R}\otimes\dgrm{Y} } 
Since the source \dgrm{Q} and the target \dgrm{R} of $j$ are projectively cofibrant, the horizontal maps are objectwise weak equivalences by \ref{tensoring preserves obj we}. The left hand vertical map is an $L$-stable equivalence by \ref{tensoring with proj. cof. preserves L-we}, since \dgrm{A} is projectively cofibrant and the claim follows.

Next we have to show, that the cobase change of a map in $J_{\mc{V}^{\mc{U}}}^{L{\rm -stable}}\otimes{\rm Ob}(\mc{V}^{\mc{U}})$ is an $L$-stable equivalence. Let \dgrm{X} be an arbitrary functor in $\mc{V}^{\mc{U}}$ and let $\dgrm{Q}\to\dgrm{R}$ be a map in $J_{\mc{V}^{\mc{U}}}^{L{\rm -stable}}$. Let further $f\co\dgrm{Q}\otimes\dgrm{X}\to\dgrm{Y}$ be some map. We factor $f$ into a projective cofibration $i\co\dgrm{Q}\otimes\dgrm{X}\to\dgrm{E}$ followed by an objectwise acyclic fibration $\dgrm{E}\to\dgrm{Y}$. Then the pushout of $f\otimes\id_{\dgrm{X}}$ along $i$ remains an $L$-stable equivalence because of left properness. 
\diagr{ \dgrm{Q}\otimes\dgrm{X} \ar[r]\ar[d]_{L-\simeq}\ar@{}[dr]|->>>{\pushout} & \dgrm{E} \ar[r]^-{{\rm obj}-\simeq}\ar[d]_-<<{L-\simeq}\ar@{}[dr]|->>>{\pushout} & \dgrm{Y} \ar[d] \\
        \dgrm{R}\otimes\dgrm{X} \ar[r] & \dgrm{P}_1 \ar[r] & \dgrm{P}_2 } 
Now the map $\dgrm{E}\to\dgrm{P}_1$ is objectwise in the class $\cof\otimes{\rm Ob}(\mc{V})$-cell. Since \mc{V} is strongly left proper, the cobase change $\dgrm{P}_1\to\dgrm{P}_2$ is an objectwise weak equivalence. This proves, that $\dgrm{Y}\to\dgrm{P}_2$ is an $L$-stable equivalence.
\end{proof}

\section{$L$-spectra}
\label{section:L-spectra}

In this section we would like to compare $L$-stable functors with $L$-spectra. The first objective is to compare them with symmetric $L$-spectra constructed in \cite{Hovey:general-sym-spec}, because they also have a monoidal product. 
Other references for symmetric spectra are \cite{HSS:sym}, \cite{jar:motsymspec} and \cite{schwede:book}.
We were not able to get a direct Quillen equivalence, but rather a zig-zag. To get better results in certain cases we also compare our $L$-stable functors to Bousfield-Friedlander-$L$-spectra, also constructed in \cite{Hovey:general-sym-spec}.


\begin{definition}
Let $\sigsph_L$ be the \mc{V}-category with the natural numbers $n\ge 0$ as objects and as morphisms
    $$ \sigsph_L(n,m):=\left\{ \begin{array}{cl}
                                    L^{m-n}\otimes_{\Sigma_{m-n}}\Sigma_m &,\text{ for }m\ge n \\
                                     \ast                 &,\text{ else }
                            \end{array}\right. $$
where $\Sigma_k$ acts on $L^k$ by permuting the tensor factors. Of course, we set $L^0=S$. A {\it symmetric spectrum} in \mc{M} is a \mc{V}-functor $X\co\sigsph_L\to\mc{M}$.
We denote the category of symmetric $L$-spectra in \mc{M} by $\symsp(\mc{M},L)$. 
\end{definition} 

There is an inclusion $j\co \sigsph_L\to\vsph_L$ of categories enriched over \mc{V}, but this inclusion is not full. Hence the counit $\LKan_jj^*\to\id$ of the induced \mc{V}-Quillen pair 
\begin{equation}\label{L-spectra Quillen equiv}
    \LKan_j\co\symsp(\mc{M},L)=\mc{M}^{\sigsph_L}\rightleftarrows\mc{M}^{\vsph_L}:\!j^* 
\end{equation}
is not an isomorphism.

\begin{definition}
The {\it evaluation functor} $\Ev_n\co\symsp(\mc{M},L)\to\mc{M}$ sending $X$ to $X_n$ has a left adjoint
    $$ F_n\co\mc{M}\to\symsp(\mc{M},L). $$
For example, $F_0$ is given by $M\mapsto (M,M\otimes L, M\otimes L^{\otimes 2},...)=F_0M.$
\end{definition} 

\begin{lemma}\label{Quillen sym. mon.}
For $\mc{M}=\mc{V}$ the functor $\LKan_j$ in \emph{(\ref{L-spectra Quillen equiv})} is strictly symmetric monoidal, the right adjoint $j^*$ is lax symmetric monoidal. 
\end{lemma}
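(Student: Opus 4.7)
The plan is to exhibit $j$ as a strong symmetric monoidal $\mc{V}$-functor between symmetric monoidal $\mc{V}$-categories and then invoke the standard Day-convolution machinery. The monoidal structure on $\vsph_L$ is inherited from $\otimes$ on $\mc{V}$ via axiom \ref{U1}(5). On $\sigsph_L$ the monoidal product is given on objects by $n\otimes m = n+m$ with unit $0$, and on hom-objects by the canonical map from $(L^{n'-n}\otimes_{\Sigma_{n'-n}}\Sigma_{n'})\otimes (L^{m'-m}\otimes_{\Sigma_{m'-m}}\Sigma_{m'})$ to $L^{(n'+m')-(n+m)}\otimes_{\Sigma_{(n'+m')-(n+m)}}\Sigma_{n'+m'}$, induced by the block inclusion $\Sigma_{n'}\times\Sigma_{m'}\inj\Sigma_{n'+m'}$ together with the symmetric monoidal structure of $\mc{V}$. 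The inclusion $j$ sends $n\mapsto L^nS$ and, on hom-objects, uses the assembly map together with the permutation $\Sigma_m$-action on $L^m$. Strong monoidality of $j$ reduces on objects to the canonical isomorphism $j(n)\otimes j(m) = L^nS\otimes L^mS \cong L^{n+m}S = j(n\otimes m)$ coming from the symmetric monoidal structure of $\mc{V}$, and on hom-objects to a direct verification using the block inclusion of symmetric groups and naturality of the assembly map.

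Next I would identify both tensor products as instances of Day convolution. For $\mc{V}^{\vsph_L}$ this is exactly the construction of section \ref{section:sym. mon. str.}, with corollary \ref{V^V symmetric monoidal} providing the closed symmetric monoidal model category. For $\mc{V}^{\sigsph_L} = \symsp(\mc{V},L)$ one checks that Hovey's smash product of symmetric $L$-spectra coincides with the Day convolution of $\mc{V}$-functors along the symmetric monoidal structure on $\sigsph_L$ described above.

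Third, I would appeal to Day's classical theorem: left Kan extension along a strong symmetric monoidal $\mc{V}$-functor between symmetric monoidal $\mc{V}$-categories is strong symmetric monoidal for the associated Day convolutions. Concretely, for $X,Y\in\mc{V}^{\sigsph_L}$ the canonical comparison $\LKan_j(X\otimes Y)\to \LKan_j X \otimes \LKan_j Y$ is an isomorphism, established by a coend calculation that only uses strong monoidality of $j$ together with cocontinuity of $\otimes$ on the target. The unit of $\mc{V}^{\sigsph_L}$ is the representable at $0$, whose image under $\LKan_j$ is the representable at $j(0)=S$, which is precisely the unit $R^S$ of $\mc{V}^{\vsph_L}$. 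The word ``strictly'' in the statement refers to the fact that these comparison maps are isomorphisms, not merely lax. Finally, once $\LKan_j$ is strong symmetric monoidal, its right adjoint $j^*$ inherits a lax symmetric monoidal structure by the standard mate construction from doctrinal adjunction.

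The main obstacle will be the careful bookkeeping required to match Hovey's definition of the smash product of symmetric $L$-spectra with the Day convolution on $\mc{V}^{\sigsph_L}$, in particular tracking the $\Sigma_n$-actions on $L^n$ through the defining coends and comparing with the formula from \ref{M^V over itself}. Once this identification is in hand, the remainder of the proof is formal enriched category theory.
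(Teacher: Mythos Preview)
Your approach is correct and in fact explains conceptually why the paper's short argument works, but it is organized quite differently. The paper does not set up $j$ as a strong monoidal functor or invoke Day's theorem at all. Instead it argues directly on generators: every symmetric $L$-spectrum is a colimit of free spectra $F_kV$, one computes $\LKan_j(F_kV)\cong R^{L^k}\otimes V$, and then simply observes that the defining formulas $F_kV\otimes F_\ell W\cong F_{k+\ell}(V\otimes W)$ and $(R^A\otimes V)\otimes(R^B\otimes W)\cong R^{A\otimes B}\otimes(V\otimes W)$ match under this correspondence; cocontinuity of $\LKan_j$ and of $\otimes$ on both sides finishes the job. Your route via Day convolution and doctrinal adjunction is the abstract reason behind this computation, and it has the advantage of making the lax monoidality of $j^*$ automatic rather than left implicit. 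The price you pay is exactly the bookkeeping you flag as the main obstacle: matching Hovey's smash product with Day convolution on $\sigsph_L$. The paper sidesteps this entirely by never naming Day convolution and working only with the generating isomorphisms, which are all that is needed.
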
 

\begin{proof}
We observe first, that every symmetric $L$-spectrum $E$ over \mc{V} can be written as a colimit
    $$ E\cong\colim_{i\in I}F_{k_i}V_i, $$
where $V_i$ is an object of \mc{V}. We can compute:
    $$ \LKan_jF_kV\cong R^{L^k}\otimes V $$
The monoidal product on symmetric $L$-spectra is defined by extending the isomorphism
    $$ F_{k}V\otimes F_{\ell}W\cong F_{k+\ell}(V\otimes W),$$
while the monoidal product on small functors is defined by extending the isomorphism
    $$ (R^{A}\otimes V)\otimes (R^B\otimes W)\cong R^{A\otimes B}\otimes(V\otimes W).$$
The claim easily follows.
\end{proof}

\begin{definition}
The $L$-stable model structure on $\symsp(\mc{M},L)$ is obtained by localizing the level model structure with respect to the set of maps 
    $$ F_{n+1}(C\otimes L)\to F_n(C), $$
where $C$ runs through the sources and targets of the generating set of cofibrations $I_{\mc{M}}$ of \mc{M}. For details see \cite[8.7]{Hovey:general-sym-spec}. Here we use left properness of \mc{M} to ensure, that all our $C$'s are cofibrant.
\end{definition} 

\begin{theorem}[Hovey]\label{Hovey,sym} 
Let \mc{C} be a left proper cellular closed symmetric monoidal model category, let \mc{D} be a left proper cellular \mc{C}-model category and let $L$ be a cofibrant \mc{C}-small object. Suppose also, that the source and target of all maps in $I_{\mc{C}}$ and $I_{\mc{D}}$ are cofibrant. Then on $\symsp(\mc{D},L)$ there exists an $L$-stable model category,  $\symsp(\mc{C},L)$ with its $L$-stable model structure becomes a closed symmetric monoidal model category and $\symsp(\mc{D},L)$ is a $\symsp(\mc{C},L)$-model category.  
\end{theorem}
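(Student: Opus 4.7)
The plan is to follow Hovey's argument in \cite[Section 8]{Hovey:general-sym-spec} and indicate where each hypothesis is used. The existence of the model structure is not ours to reprove; the content of the proof is to verify that the closed symmetric monoidal structure coming from Day convolution on $\symsp(\mc{C},L)$ is compatible with the $L$-stable model structure, and that the analogous module compatibility holds for $\symsp(\mc{D},L)$.

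First I would recall the level (projective) model structure on $\symsp(\mc{D},L)$: weak equivalences and fibrations are defined levelwise via the evaluation functors $\Ev_n$, with generating (acyclic) cofibrations $F_n I_{\mc{D}}$ and $F_n J_{\mc{D}}$. Since $\mc{D}$ is cellular and left proper, so is the level structure. The $L$-stable model structure is then obtained by left Bousfield localization with respect to the set of maps $F_{n+1}(C\otimes L)\to F_n(C)$ indexed by $n$ and by the (cofibrant by hypothesis) sources and targets $C$ of $I_{\mc{D}}$; cellularity plus left properness is precisely what makes Hirschhorn's localization machinery apply, and $L$-smallness of $L$ ensures that the resulting fibrant objects are the levelwise fibrant $\Omega_L$-spectra. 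The same discussion for $\mc{D}=\mc{C}$ produces the $L$-stable structure on $\symsp(\mc{C},L)$.

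Next I would check the pushout-product axiom for the Day-convolution smash product $\wedge$ on $\symsp(\mc{C},L)$. On generating cofibrations one uses
\begin{equation*}
  (F_m i)\,\square\,(F_n j)\cong F_{m+n}(i\,\square\, j),
\end{equation*}
so the pushout-product axiom at the level of cofibrations reduces to the pushout-product axiom for $I_{\mc{C}}\,\square\, I_{\mc{C}}$ in $\mc{C}$, which holds because $\mc{C}$ is closed symmetric monoidal. For the stable acyclic case one shows (this is the technical heart of Hovey's argument) that smashing with a cofibrant object sends $L$-stable equivalences to $L$-stable equivalences, using that the localizing maps $F_{n+1}(C\otimes L)\to F_n(C)$ have cofibrant source and target (here the cofibrancy of sources/targets in $I_{\mc{C}}$ is crucial) and that Day convolution of these generators is again of the same form up to $L$-stable equivalence. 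The unit axiom is immediate since $F_0 S_{\mc{C}}=\mathbbm{1}$ is cofibrant. The same strategy, applied to a generating cofibration $F_m i$ from $\symsp(\mc{C},L)$ and $F_n j$ from $\symsp(\mc{D},L)$, gives the $\symsp(\mc{C},L)$-module structure on $\symsp(\mc{D},L)$; here we use that the source/target cofibrancy assumption is also imposed on $I_{\mc{D}}$.

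The hard part will be the second half of the pushout-product axiom: showing that smashing a cofibration with an $L$-stable acyclic cofibration yields an $L$-stable equivalence. This is the analogue of \ref{tensor + L-stable} in the present paper, but in Hovey's setup one cannot simply test on fibrant objects because the smash product does not preserve $\Omega_L$-spectra on the nose. Hovey's trick is to factor an arbitrary $L$-stable acyclic cofibration through the saturated class generated by the localizing maps and the level acyclic cofibrations, and then to verify the axiom on these generators, where one gets an explicit handle using the isomorphism $F_m A\wedge F_n B\cong F_{m+n}(A\otimes B)$; the cofibrancy of the $C$'s and the $L$-smallness of $L$ are what make the argument close. All of this is carried out in \cite[Thm.~8.11 and Thm.~9.1]{Hovey:general-sym-spec}, and our hypotheses are exactly those required to invoke his results, so the proof reduces to quoting them.
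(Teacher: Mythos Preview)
The paper does not give its own proof of this theorem: it is attributed to Hovey and simply cited from \cite[8.8]{Hovey:general-sym-spec} and \cite[8.11]{Hovey:general-sym-spec}, with the remark that no cyclic-permutation condition on $L\otimes L\otimes L$ is needed and that the cofibrancy assumptions seem awkward. Your proposal is therefore more detailed than what the paper does; you outline Hovey's actual argument (existence of the level structure, localization via Hirschhorn's machinery using cellularity and left properness, and the pushout-product check via the isomorphism $(F_m i)\,\square\,(F_n j)\cong F_{m+n}(i\,\square\, j)$) rather than just citing it. Your sketch is accurate in spirit and correctly identifies where each hypothesis enters; the only minor correction is that the precise references should be 8.8 and 8.11 of \cite{Hovey:general-sym-spec} rather than 8.11 and 9.1.
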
 

The previous theorem is proved in \cite[8.8]{Hovey:general-sym-spec} and \cite[8.11]{Hovey:general-sym-spec}. There is no assumption on the cyclic permutation on $L\otimes L\otimes L$. The cofibrancy assumptions seem awkward.
\begin{question}\label{cofibrancy}
In a left proper model category a map is a fibration (resp. acyclic fibration) if and only if it satisfies the right lifting property with respect to all acyclic cofibrations (resp. cofibrations) between cofibrant objects. So can one always obtain from a given set of generators for the (acyclic) cofibrations a generating set, whose sources and targets are cofibrant? 
\end{question}

\begin{definition}
There are mutually inverse isomorphism of categories
\diagr{  \symsp(\mc{M},L)^{\mc{U}}\ar@<2pt>[r]^{I_1} &  \symsp(\mc{M}^{\mc{U}},L), \ar@<2pt>[l]^{I_2} }
which merely change the priority of variables.
\end{definition} 

\begin{lemma}
Suppose that \mc{V} and \mc{M} satisfy in addition to our usual assumptions from section \emph{\ref{section:fixing}} the conditions of Hovey's theorem \emph{\ref{Hovey,sym}}. 
Suppose also, that \mc{U} is small.
We give $\symsp(\mc{M},L)^{\mc{U}}$ the $L$-stable model structure over the $L$-stable model structure of $\symsp(\mc{M},L)$ and $\symsp(\mc{M}^{\mc{U}},L)$ the $L$-stable model structure over the $L$-stable model structure of $\mc{M}^{\mc{U}}$. Then the isomorphisms $I_1$ and $I_2$ in \emph{(\ref{L-spectra Quillen equiv})} are isomorphisms of model structures.
\end{lemma}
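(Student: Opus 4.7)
The plan is to unpack the iterated Bousfield--Friedlander localizations that define each model structure and match them up through the isomorphism $I_1$. An object of $\symsp(\mc{M},L)^{\mc{U}}$, a small functor $\dgrm{X}\co\mc{U}\to\symsp(\mc{M},L)$, corresponds under $I_1$ to a symmetric sequence of small functors $\dgrm{X}_n\co\mc{U}\to\mc{M}$ together with $\Sigma_n$-equivariant assembly maps $\dgrm{X}_n\otimes L\to\dgrm{X}_{n+1}$, which is exactly the data of an object of $\symsp(\mc{M}^{\mc{U}},L)$. Both sides thus describe the same bi-indexed datum $(\dgrm{X}_n(K))_{n\ge 0,\,K\in\mc{U}}$, and the two model structures arise from the same two $L$-stabilizations applied in opposite orders: Hovey's spectrum-level stabilization in the variable $n$ and our functor-category stabilization in the variable $K$.

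First I would check that the projective/level-projective structures agree: the generating (acyclic) cofibrations on both sides take the common form $F_n(R^K\otimes i)$ for $K\in\mc{U}$, $n\ge 0$, and $i\in I_{\mc{M}}$ (resp.\ $J_{\mc{M}}$), up to the symmetric monoidal reassociations, and projective weak equivalences on both sides reduce to the condition that $\dgrm{X}_n(K)\to\dgrm{Y}_n(K)$ is a weak equivalence in $\mc{M}$ for every $n$ and $K$. Next, the inner $L$-stable localization on each side (Hovey's on $\symsp(\mc{M},L)$ on the left, ours on $\mc{M}^{\mc{U}}$ on the right) descends to the outer functor/spectrum category through the pointwise characterization of weak equivalences and fibrations, and on the identified bi-indexed category it amounts to the same requirement on one of the two independent coordinates.

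Finally I would compare the outer $L$-stabilization on each side. On the left it is the Bousfield--Friedlander $P_L=\hocolim_n R^n(-)^h L^n$ acting in the $\mc{U}$-variable; on the right it is Hovey's stabilization inverting the spectrum-level maps $F_{n+1}(C\otimes L)\to F_n(C)$. Since the two $L$-stabilizations act on disjoint coordinates of $(\dgrm{X}_n(K))_{n,K}$, they commute up to natural weak equivalence, and the combined fibrant objects on both sides are precisely the symmetric sequences of $L$-stable homotopy functors whose spectrum-level bonding maps are also objectwise weak equivalences. The main technical point will be to verify this commutation cleanly: both $P$-functors are built from filtered homotopy colimits of iterated cotensors with $L$ interlaced with fibrant replacement, and filtered homotopy colimits commute with finite homotopy limits by \ref{alpha-commute}, while the two fibrant replacement functors commute across the disjoint variables up to natural weak equivalence. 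Combined with the agreement of cofibrations, the Bousfield--Friedlander theorem \ref{thm:bousfield-machine} then forces the two $L$-stable model structures to coincide, so $I_1$ and $I_2$ are mutually inverse isomorphisms of model categories.
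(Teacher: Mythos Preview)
Your approach is viable but far more elaborate than what the paper does. The paper's proof is a single observation: since $\mc{U}$ is small, both model structures are cofibrantly generated, and using the natural isomorphism $F_n(\dgrm{X}\otimes M)\cong\dgrm{X}\otimes F_nM$ one checks directly that the generating sets of cofibrations and acyclic cofibrations on the two sides correspond under $I_1$. That is the entire argument.

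Your route---matching the iterated localizations step by step and arguing that the two $L$-stabilizations act on disjoint coordinates and hence commute---is conceptually reasonable and can be made to work: both structures are left Bousfield localizations of the same bi-projective model structure with the same cofibrations, so it suffices to show the fibrant objects coincide, and indeed on both sides they are the level-fibrant $\Omega$-spectra of $L$-stable homotopy functors. However, your exposition is imprecise at a key point. You speak of ``both $P$-functors'' and their commutation, but Hovey's stable structure on symmetric spectra is \emph{not} given by an explicit Bousfield--Friedlander endofunctor; it is a left Bousfield localization at a set of maps, and the na\"ive $\Omega$-spectrification does not detect its weak equivalences (the semistability phenomenon). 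So the commutation-of-$P$-functors argument you sketch does not literally apply to the spectrum coordinate; the honest version of your argument is the direct comparison of fibrant objects. The paper's generating-set comparison bypasses all of this and is the cleaner path.
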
 

This result permits us to identify the categories $\symsp(\mc{M},L)^{\mc{U}}$ and $\symsp(\mc{M}^{\mc{U}},L)$ with their $L$-stable model structure.

\begin{proof}
Using the fact, that for a functor $\dgrm{X}\co\mc{U}\to\mc{V}$, an object $M$ in \mc{M} and any $n\ge 0$ there is a natural isomorphism
    $$ F_n(\dgrm{X}\otimes M)\cong \dgrm{X}\otimes F_nM ,$$
it is not difficult to check, that the generating sets of cofibrations and acyclic cofibrations of the two model structure are mapped to each other. 
\end{proof}

It is proved in \cite[section 8]{Hovey:general-sym-spec}, that $(F_0,\Ev_0)$ is a \mc{V}-Quillen adjunction, if the source category is already an $L$-stable model structure. It follows, that
\begin{equation}\label{Quillen pair1}
    F_0\co\mc{M}^{\mc{U}}\rightleftarrows\symsp(\mc{M}^{\mc{U}},L):\!\Ev_0
\end{equation}
is a \mc{V}-Quillen equivalence, provided we take the $L$-stable model structures everywhere.

Now we restrict to $\vsph_L$ as source category. We remind the reader, that in this case the $L$-stable model structure on $\mc{M}^{\vsph_L}$ is cofibrantly generated.  
\begin{theorem}\label{Quillen pair3}
Let \mc{V} be a pointed, right proper, strongly left proper, cellular, locally presentable, closed symmetric monoidal model category. Let \mc{M} be a proper, cellular, locally presentable \mc{V}-model category. Let $L$ be cofibrant and \mc{V}-small. Suppose, that source and target of all maps in $I_{\mc{V}}$ and $I_{\mc{M}}$ are cofibrant.
Then there is a zig-zag in $\ulvmod$ of \mc{V}-Quillen equivalences
\diagr{   \symsp(\mc{M},L)^{\vsph_L}\ar@<2pt>[d]^{\rho}\ar@<2pt>[rr]^{I_1} & & \symsp(\mc{M}^{\vsph_L},L) \ar@<-2pt>[d]_{\Ev_0}\ar@<2pt>[ll]^{I_2}   \\
         \symsp(\mc{M},L) \ar@<2pt>[u]^{\lambda}   & & \mc{M}^{\vsph_L} \ar@<-2pt>[u]_{F_0}  }
which is functorial in \mc{M}. For $\mc{V}=\mc{M}$ these Quillen equivalences are symmetric monoidal. 
\end{theorem}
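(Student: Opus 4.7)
The plan is to establish the three edges of the zig-zag separately, and then verify the monoidal/functorial refinements. The top edge $(I_1,I_2)$ is handled by the lemma immediately preceding the theorem statement, which shows that $I_1$ and $I_2$ are isomorphisms of model structures once both sides carry their $L$-stable structures built from matching ingredients. So the real work consists in proving that the two vertical pairs are \mc{V}-Quillen equivalences.

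For the right-hand vertical pair $(F_0,\Ev_0)\colon \mc{M}^{\vsph_L} \rightleftarrows \symsp(\mc{M}^{\vsph_L},L)$, the point is that $\mc{M}^{\vsph_L}$ equipped with the $L$-stable model structure is already an $L$-stable \mc{V}-model category in the sense of \ref{L-stable}. This is precisely the setting in which Hovey's comparison, summarized in (\ref{Quillen pair1}) above, gives that $(F_0,\Ev_0)$ is a \mc{V}-Quillen equivalence. We only need to check that the hypotheses of \ref{Hovey,sym} carry over from $\mc{M}$ to $\mc{M}^{\vsph_L}$: cellularity, local presentability, left properness and the cofibrancy of sources/targets in the generating sets are all preserved under passing to the small source category $\vsph_L$ with the $L$-stable structure, since the generators are of the form $R^{L^nS}\otimes i$ and $R^{L^nS}$ is projectively cofibrant by \ref{rep. cof}.

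For the left-hand vertical pair $(\lambda,\rho)\colon \symsp(\mc{M},L) \rightleftarrows \symsp(\mc{M},L)^{\vsph_L}$, the key observation is that $\symsp(\mc{M},L)$ with the stable model structure of \cite{Hovey:general-sym-spec} is $L$-stable: the adjunction $(L,R)$ on symmetric $L$-spectra is a \mc{V}-Quillen equivalence by construction. Therefore \ref{S-eval}(ii), applied with $\mc{M}$ replaced by $\symsp(\mc{M},L)$ and $\mc{U}=\vsph_L$, yields that $(\lambda,\rho)$ is a \mc{V}-Quillen equivalence. This is the step I expect to require the most care, because one has to verify that $\symsp(\mc{M},L)$ lies in $\ulvmod$ (right proper, locally presentable, cofibrantly generated with functorial small fibrant replacement) so that \ref{S-eval} applies; pointedness of \mc{V} and the pointedness of $L^nS$ are used here to stay inside $\vsph_L$.

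Finally, functoriality in \mc{M} follows from \ref{M-functoriality} for the $\mc{M}^{\vsph_L}$ side and from standard naturality of the $F_0,\Ev_0,\lambda,\rho,I_1,I_2$ constructions with respect to \mc{V}-Quillen adjunctions in \mc{M}. For the symmetric monoidal statement in the case $\mc{V}=\mc{M}$, we assemble three separate facts: $(\lambda,\rho)$ is a symmetric monoidal \mc{V}-adjunction by the theorem following \ref{adjunctions}; $(I_1,I_2)$ is monoidal because it merely reorders variables in the $\otimes$-presentation of both sides; and $(F_0,\Ev_0)$ is symmetric monoidal by \ref{Quillen sym. mon.} together with Hovey's construction of the monoidal product on $\symsp(\mc{V},L)$. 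Composing, each leg of the zig-zag is a symmetric monoidal \mc{V}-Quillen equivalence, completing the proof.
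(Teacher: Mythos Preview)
Your proposal is correct and follows essentially the same approach as the paper: the three legs are handled by the preceding lemma for $(I_1,I_2)$, by (\ref{Quillen pair1}) for $(F_0,\Ev_0)$ once $\mc{M}^{\vsph_L}$ is $L$-stable, and by \ref{S-eval} for $(\lambda,\rho)$ once $\symsp(\mc{M},L)$ is $L$-stable. One minor slip: your citation of \ref{Quillen sym. mon.} for the monoidality of $(F_0,\Ev_0)$ is misplaced, since that lemma concerns $\LKan_j$ rather than $F_0$; the fact that $F_0$ is strict symmetric monoidal is part of Hovey's construction of the smash product on symmetric spectra, which you do also invoke.
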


\begin{proof}
We observed already, that the pair $(F_0,\Ev_0)$ is a \mc{V}-Quillen equivalences and that the pair $(I_1,I_2)$ is an isomorphism of model structures. 
But also $(\lambda,\rho)$ is a Quillen equivalence by \ref{S-eval}.
\end{proof}

It is tempting to complete the above diagram in the following way:
\diag{   \symsp(\mc{M},L)^{\vsph_L}\ar@<2pt>[d]^{\rho}\ar@<2pt>[rr]^{I_1} & & \symsp(\mc{M}^{\vsph_L},L) \ar@<-2pt>[d]_{\Ev_0}\ar@<2pt>[ll]^{I_2}   \\
         \symsp(\mc{M},L) \ar@<2pt>[rr]^-{\LKan_j}\ar@<2pt>[u]^{\lambda}   & & \mc{M}^{\vsph_L} \ar@<2pt>[ll]^-{j^*} \ar@<-2pt>[u]_{F_0}  }{bispectrum}
Here the lower horizontal maps are the ones from (\ref{L-spectra Quillen equiv}).
Unfortunately this diagram is not commutative. It would be possible to prove, that the lower horizontal maps are a \mc{V}-Quillen equivalence, if we had an answer to the following question.
\begin{question}\label{stabil2}
Is the unit map $E\to j^*\LKan_jE$ in $\mc{M}^{\sigsph_L}$ a stable equivalence of $L$-spectra, if $E$ is cofibrant in $\symsp(\mc{M},L)$?
\end{question}

We have no doubt, that in all reasonable cases the answer to this question is affirmative. The problem is, that $L$-stable equivalences and $L$-stable fibrations of symmetric $L$-spectra are rather difficult to characterize.
We point out however, that in the special cases or our examples in the next section \ref{section:examples} we get better results. The reason is, that we can use ordinary Bousfield-Friedlander-$L$-spectra as stepping stone for the comparison, see remark \ref{spectra-comparison}. 

\begin{definition}
Let $\sph_L$ be the \mc{V}-category with objects the natural numbers $n\in\mathbbm{N}$ and morphisms
    $$ \sph_L(n,m)=\left\{\begin{array}{cl}
                              L^{\otimes m-n} &,\text{ for } m\ge n \\
                                \ast          &,\text{ for } m< n. 
                          \end{array}\right.$$
It comes with obvious functors $i\co\sph_L\to\sigsph_L$ and $k\co\sph_L\to\vsph_L$. The category $\Sp(\mc{M},L)$ of \mc{V}-functors from $\sph_L$ to \mc{M} is the category of {\it Bousfield-Friedlander spectra}. For details see \cite{BF:gamma} and \cite{Hovey:general-sym-spec}.
\end{definition}

First we recall theorems 4.12, 4.14 and 6.5 from \cite{Hovey:general-sym-spec}. 
\begin{lemma}[Hovey]\label{Hovey,BF}
\begin{punkt}
   \item 
Let \mc{M} be a pointed proper locally presentable almost finitely generated model category. Let $L$ in \mc{V} be cofibrant and \mc{V}-small. Then the $L$-stable model structure makes $\Sp(\mc{M},L)$ into a proper \mc{V}-model category.
   \item 
The map $X\to PX$ is an $L$-stable equivalence for all $X$ in $\Sp(\mc{M},L)$.
   \item
A map $X\to Y$ in $\Sp(\mc{M},L)$ is an $L$-stable equivalence if and only if $PX\to PX$ is a level equivalence.
   \item
A map $X\to Y$ in $\Sp(\mc{M},L)$ is an $L$-stable fibration if and only if it is a level fibration, such that the diagram
\diagr{ X \ar[r]\ar[d] & PX \ar[d] \\ Y \ar[r] & PY}
is a homotopy pullback square in the level projective model structure.
   \item
The $L$-stable model structure gives $\Sp(\mc{M},L)$ the structure of a \mc{V}-model category.
\end{punkt}
\end{lemma}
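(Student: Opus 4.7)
The plan is to recognize that this lemma is essentially a specialization to Bousfield-Friedlander spectra of the general $L$-stabilization machinery developed in section \ref{section:L-stable functors}, and more directly a repackaging of Hovey's results. I would proceed by invoking theorem \ref{thm:bousfield-machine} applied to the functor $P$ defined exactly as in the body of the paper, but with the source category $\mc{U} = \sph_L$. The advantage over the general setup is that $\sph_L$ is a (small) totally ordered sequential diagram, so a functor $X \co \sph_L \to \mc{M}$ is literally a sequence $(X_n)$ with assembly maps $X_n \otimes L \to X_{n+1}$, and $P$ reduces to $(PX)_n = \hocolim_k R^k (X_{n+k}^{\fib})$.

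First I would verify axioms \textbf{(A.4)}, \textbf{(A.5)}, \textbf{(A.6)} of the Bousfield-Friedlander machine, copying the proof of lemma \ref{P} almost verbatim. \textbf{(A.4)} is immediate since $R$ preserves weak equivalences between fibrant objects and fibrant replacement handles the rest. For \textbf{(A.5)} one argues that since $PX$ is already $L$-stable, both $\epsilon_{PX}$ and $P\epsilon_X$ become weak equivalences after telescoping. \textbf{(A.6)} follows because $P$ commutes with finite homotopy limits: this is where the almost finitely generated hypothesis on $\mc{M}$ enters, via remark \ref{alpha-commute} applied to the sequential colimit defining $P$. Applying theorem \ref{thm:bousfield-machine} then yields the existence of the $L$-stable model structure together with right properness from the hypothesis on $\mc{M}$, proving (i). Parts (ii), (iii) and (iv) are then immediate consequences: (ii) is the fact that $X \to PX$ is an $F$-equivalence for $F = P$ (since $P(X \to PX) = \epsilon_{PX}$ is a level equivalence by \textbf{(A.5)}), (iii) is the definition of $F$-equivalence, and (iv) is exactly lemma \ref{F-fibrant} rewritten for this $P$.

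For (v), the \mc{V}-model category structure, I would check the pushout-product axiom using the same strategy as in theorem \ref{V + L-stable}: reduce to the characterization of $L$-stable fibrations in (iv), then note that cotensoring with a fixed $K \in \mc{V}$ commutes with both the level fibrant replacement and with $R = (-)^L$, so homotopy pullback squares are preserved by $p \boxtimes i$. Left properness when $\mc{M}$ is left proper follows from theorem \ref{thm:bousfield-machine} as well.

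The main obstacle is axiom \textbf{(A.6)}, which depends on $P$ commuting with homotopy pullbacks. This in turn requires that the sequential homotopy colimit defining $P$ commutes with finite homotopy limits in $\mc{M}$, which is precisely what the almost finitely generated assumption provides. Since the entire argument is already carried out in Hovey's paper \cite[Thm.~4.12, 4.14, 6.5]{Hovey:general-sym-spec}, the cleanest write-up is simply to cite those results; but I prefer the direct verification above because it makes transparent that Bousfield-Friedlander spectra fit perfectly into the framework we have already set up, and will be useful for the comparison of Quillen equivalences in the following section.
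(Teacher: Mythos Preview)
The paper gives no proof of this lemma at all: it is labelled ``Hovey'' and the preceding sentence reads ``First we recall theorems 4.12, 4.14 and 6.5 from \cite{Hovey:general-sym-spec}.'' So the paper's treatment is a pure citation.

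Your proposal instead reconstructs the argument by running the Bousfield--Friedlander machine \ref{thm:bousfield-machine} directly on $\Sp(\mc{M},L)$ with the telescope functor $P$. This is correct and is essentially what Hovey himself does; you have recognized this and even cite the same three theorems. The difference from the paper is only that you spell out the verification of {\bf (A.4)}--{\bf (A.6)} and the $\mc{V}$-compatibility rather than pointing to Hovey. What your approach buys is transparency: it makes explicit that $\Sp(\mc{M},L)$ is an instance of the same localization scheme used throughout the paper, which is helpful for the later comparison results. What the paper's approach buys is brevity and honesty about attribution, since the result is not new.

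One small caution: you invoke remark \ref{alpha-commute} as the place where ``almost finitely generated'' enters, but that remark is about local presentability, not about the model-theoretic condition. In Hovey's argument almost finite generation is what guarantees that level weak equivalences and level fibrations are closed under sequential colimits, which is what one actually needs to conclude that $P$ preserves homotopy pullbacks and that {\bf (A.6)} holds. The conclusion you draw is right, but the justification should point to the almost-finitely-generated hypothesis directly (or to Hovey's Lemma 4.10--4.11) rather than to \ref{alpha-commute}. Also note that $\sph_L$ is not a full subcategory of $\mc{V}$ and does not satisfy axiom \ref{U1}, so you are not literally specializing section \ref{section:L-stable functors}; you are rerunning the same Bousfield--Friedlander argument in a parallel setting, which is fine but should be phrased that way.
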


\begin{lemma}\label{k detects we}
The functor $k^*\co\mc{M}^{\vsph_L}\to\Sp(\mc{M},L)$ preserves and detects $L$-stable equivalences.
\end{lemma}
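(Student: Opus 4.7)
The plan is to reduce the claim to a comparison of the stabilization functors on both sides.

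First I record the two characterizations of $L$-stable equivalences. In $\mc{M}^{\vsph_L}$ a map $f$ is an $L$-stable equivalence iff $P_Lf$ is an objectwise weak equivalence on $\vsph_L$, since by \ref{L-stable structure} and \ref{about P} both $P_L\dgrm{X}$ and $P_L\dgrm{Y}$ are homotopy functors. In $\Sp(\mc{M},L)$, Hovey's characterization \ref{Hovey,BF}(iii) says $f$ is $L$-stable iff $Pf$ is a level equivalence, where $P$ is again of the form $\hocolim_n R^n(-)^{h}L^n$ — but here $(-)^h$ means only levelwise projective fibrant replacement of the output, since the source $\sph_L$ has no notion of fibrancy on objects.

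The heart of the proof is to construct a natural level weak equivalence
\[ k^*(P_L\dgrm{X}) \simeq P(k^*\dgrm{X}) \]
for every $\dgrm{X}\in\mc{M}^{\vsph_L}$. Evaluated at level $n$, the left side is $\hocolim_m R^m\fibfun\dgrm{X}\fibfun(L^{n+m}S)$ and the right side is $\hocolim_m R^m\fibfun\dgrm{X}(L^{n+m}S)$, so the only discrepancy is the inner $\fibfun$ applied to the input $L^{n+m}S$. Since $\dgrm{X}\fibfun$ is itself a homotopy functor (precisely the remark following Axiom \ref{Uho}, using that every object of $\vsph_L$ is cofibrant together with the enriched Whitehead lemma \ref{Whitehead}), the comparison map $\dgrm{X}\fibfun(L^{n+m}S)\to\dgrm{X}\fibfun((L^{n+m}S)^{\fib})$ is a weak equivalence between fibrant objects. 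Applying $R^m$ (which preserves weak equivalences between fibrant objects) and then the filtered homotopy colimit (preserved by $\mc{V}$-smallness of $L$, cf.\ \ref{about P}) yields the desired natural level weak equivalence.

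With this comparison in hand, both assertions follow. If $f$ is an $L$-stable equivalence in $\mc{M}^{\vsph_L}$, then $P_Lf$ is a level equivalence on $\vsph_L$, hence on $\sph_L$ after applying $k^*$, so by the comparison $Pk^*f$ is a level equivalence, i.e.\ $k^*f$ is $L$-stable. Conversely, if $k^*f$ is an $L$-stable equivalence of BF spectra, then $Pk^*f$, and hence $k^*P_Lf$, is a level equivalence on $\sph_L$; thus $P_Lf$ is a weak equivalence on each $L^nS$. Since $P_L\dgrm{X}$ and $P_L\dgrm{Y}$ are homotopy functors, they convert the weak equivalences $L^nS\to(L^nS)^{\fib}$ into weak equivalences, so $P_Lf$ is also a weak equivalence on each $(L^nS)^{\fib}$. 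By Definition \ref{L} these are all the objects of $\vsph_L$, so $P_Lf$ is an objectwise weak equivalence and $f$ is $L$-stable. The main obstacle is precisely the middle paragraph: one must argue carefully that the different placements of $\fibfun$ in the two stabilization procedures give the same answer up to natural level weak equivalence, which is exactly where the cofibrancy of all objects in $\vsph_L$ and the resulting homotopy-functoriality of $\dgrm{X}\fibfun$ are decisive.
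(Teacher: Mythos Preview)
Your overall strategy---reduce to comparing the two stabilization functors and then read off preservation and detection---is sound, and it is considerably more honest than the paper's one-line ``follows directly from \ref{Hovey,BF}(iii)'', which sweeps the same issue under the rug. However, the heart of your argument, the termwise comparison $k^*(P_L\dgrm{X})\simeq P(k^*\dgrm{X})$, does not go through as written.

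You correctly identify the discrepancy: at level $n$ and stage $m$ the left side is $R^m\bigl(\dgrm{X}((L^{n+m}S)^{\fib})\bigr)^{\fib}$ while the right side is $R^m\bigl(\dgrm{X}(L^{n+m}S)\bigr)^{\fib}$, and the difference is whether $\fibfun$ is applied to the input. But the map you then analyze, $\dgrm{X}\fibfun(L^{n+m}S)\to\dgrm{X}\fibfun((L^{n+m}S)^{\fib})$, unwinds in the paper's notation to $\dgrm{X}((L^{n+m}S)^{\fib})\to\dgrm{X}(((L^{n+m}S)^{\fib})^{\fib})$; this is indeed a weak equivalence because $\dgrm{X}\fibfun$ is a homotopy functor, but it is \emph{not} the comparison you need. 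The comparison you need is $\dgrm{X}(L^{n+m}S)\to\dgrm{X}((L^{n+m}S)^{\fib})$ (then apply the outer $\fibfun$ and $R^m$), and this would require $\dgrm{X}$ itself---not $\dgrm{X}\fibfun$---to send the weak equivalence $L^{n+m}S\to(L^{n+m}S)^{\fib}$ to a weak equivalence. Since $L^{n+m}S$ need not be fibrant, the enriched Whitehead lemma does not apply, and for an arbitrary small $\dgrm{X}$ this map need not be a weak equivalence. So the termwise comparison fails, and both your preservation and detection arguments rest on it.

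One way to repair the argument is to avoid a termwise comparison and instead show that $k^*(P_L\dgrm{X})$ is itself a stably fibrant replacement of $k^*\dgrm{X}$ in $\Sp(\mc{M},L)$: it is level fibrant and an $\Omega_L$-spectrum because $P_L\dgrm{X}$ is $L$-stable, so it suffices to show that $k^*\dgrm{X}\to k^*(P_L\dgrm{X})$ is a stable equivalence of BF spectra. The step $k^*(\dgrm{X}^h)\to k^*(P_L\dgrm{X})$ is exactly Hovey's stabilization applied to the level-fibrant spectrum $k^*(\dgrm{X}^h)$, hence a stable equivalence; the remaining step $k^*\dgrm{X}\to k^*(\dgrm{X}^h)$ still needs an argument (it is not a level equivalence in general), and this is where the real work hides.
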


\begin{proof}
Follows directly from \ref{Hovey,BF}(iii).
\end{proof}

We chose $\fib$ to be a small fibrant replacement functor in \mc{V} or \mc{M} and we abbreviated by $(\frei)^h$ the assignment $\dgrm{X}\mapsto\fib\circ\dgrm{X}\circ\fib$. 
\begin{lemma}[\cite{DRO:enriched} Cor. 7.4.]\label{unit for BF spectra}
We take for granted our standard assumptions from section \emph{\ref{section:fixing}}.
Suppose that \mc{M} is left proper and that the cyclic permutation on $L\otimes L\otimes L$ is \mc{V}-left homotopic to the identity.
Then the unit map 
    $$E\to k^*(\LKan_kE)$$ 
in $\Sp(\mc{M},L)$ is a stable equivalence of Bousfield-Friedlander $L$-spectra for every cofibrant $E$.
\end{lemma}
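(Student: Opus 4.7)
The strategy is to apply Lemma \ref{Hovey,BF}(iii): it suffices to show that the unit $E \to k^*\LKan_k E$ becomes a level weak equivalence after applying the stabilization functor $P$ to each side. Since $\LKan_k$ is a left Quillen functor (Lemma \ref{LKan left Quillen}) and $P$ commutes with filtered homotopy colimits and with the cellular structure of cofibrant objects, one can reduce to cofibrant generators. Combining left properness of $\mc{M}$ with Lemma \ref{we closed under filtered colimits}, together with the fact that every cofibrant Bousfield-Friedlander spectrum is a retract of a cell object built from the generators $F_n(i)$ with $i \in I_{\mc{M}}$, reduces the problem to verifying the claim for $E = F_n C$, where $C$ is the source or target of a generating cofibration of $\mc{M}$ and $n \ge 0$.

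For such $E$, the coend formula defining the left Kan extension together with the explicit description of $\sph_L$ yields
$$\LKan_k(F_n C)(K) \;\cong\; \mc{V}(L^n, K) \otimes C$$
at every object $K$ of $\vsph_L$. Restricting along $k$ back to $\sph_L$, the unit at level $m \ge n$ becomes the map
$$L^{m-n} \otimes C \;\longrightarrow\; \mc{V}(L^n, L^m) \otimes C$$
induced by the canonical arrow $L^{m-n} \to \mc{V}(L^n, L^n \otimes L^{m-n})$ adjoint to the identity of $L^m$. Applying $P$ turns both sides into the usual telescopes of cotensors with $L^j$, so the task reduces to showing that the induced map of telescopes is a level weak equivalence.

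The discrepancy between $L^{m-n}$ and $\mc{V}(L^n, L^m)$ is governed by the permutations of the tensor factors of $L$; in particular, the structure maps of the telescope defining $P$ incorporate cyclic shifts of $L$-factors inside $\mc{V}(L^n,L^{m+j})$. This is precisely where the hypothesis that the cyclic permutation on $L \otimes L \otimes L$ is $\mc{V}$-left homotopic to the identity is used. Exactly as in Hovey's treatment of the comparison between symmetric and Bousfield-Friedlander spectra, the hypothesis lets one replace the cyclic shift by the identity up to $\mc{V}$-homotopy in each term of the telescope. Since small functors are $\mc{V}$-functors and therefore preserve $\mc{V}$-homotopy equivalences (Remark \ref{Whitehead}), passing through the cotensors $(\frei)^{L^j}$ and the homotopy colimit over $j$, the discrepancy disappears in $P$ and the unit map becomes a level equivalence.

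The main obstacle is the last step: organizing the cyclic permutation homotopies so that they assemble coherently into a map of telescopes realizing the required level equivalence. Once the cyclic permutation hypothesis is unpacked into an explicit $\mc{V}$-homotopy on $L^{\otimes 3}$ and tracked through the iterated cotensors of the telescope, one concludes that $P E \to P k^*\LKan_k E$ is a level weak equivalence, hence $E \to k^* \LKan_k E$ is a stable equivalence of Bousfield-Friedlander $L$-spectra.
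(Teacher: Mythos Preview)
Your reduction to the generators $F_nC$ via cellular induction and left properness is essentially the same as the paper's second half, and your computation $\LKan_k(F_nC)\cong R^{L^n}\otimes C$ is correct. The real divergence is in how you treat the generator case.

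You propose to compare the two telescopes $P(F_nC)$ and $P(k^*\LKan_k F_nC)$ directly, term by term, and then argue that the cyclic permutation homotopy on $L^{\otimes 3}$ makes the discrepancy between $L^{m-n}$ and $\mc{V}(L^n,L^m)$ vanish in the colimit. You acknowledge yourself that ``the main obstacle is the last step: organizing the cyclic permutation homotopies so that they assemble coherently into a map of telescopes.'' This is not a minor bookkeeping issue. The maps $L^{m-n}\to\mc{V}(L^n,L^m)$ are not weak equivalences before stabilization, and the structure maps of the two telescopes differ by genuinely different permutations at each stage; producing a coherent chain of $\mc{V}$-homotopies that survives the homotopy colimit is exactly the content of Hovey's result \cite[10.3]{Hovey:general-sym-spec} that $\frei\otimes L$ is a Quillen equivalence on $\Sp(\mc{M},L)$ under the cyclic permutation hypothesis. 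Your sketch does not supply this argument, so as written there is a gap.

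The paper avoids the explicit telescope comparison entirely. For $E=\sph_L(n,\frei)$ it sets up a commuting triangle with vertices $\sph_L(n,\frei)$, $k^*(\LKan_k\sph_L(n,\frei))^h\cong (R^{L^n})^h$, and $P(R^{L^n})$. The map $\sph_L(n,\frei)\to P(R^{L^n})$ is identified with the adjoint of the evident stable equivalence $\sph_L(n,\frei)\otimes L^n\to P(\sph_L(0,\frei))$; the adjoint is then a stable equivalence precisely because $\frei\otimes L$ is a Quillen equivalence, and this is where the paper invokes \cite[10.3]{Hovey:general-sym-spec} and hence the cyclic permutation hypothesis. The remaining edges of the triangle are handled by the idempotency of $P$ and the fact that $(\frei)^h$ is an objectwise equivalence, and $2$-out-of-$3$ finishes the special case. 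This packages all the permutation coherence into Hovey's theorem rather than reproving it by hand.
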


\begin{proof}
First one proves the claim for the special case $E=\sph_L(n,\frei)$. In the diagram
\diagr{ \sph_L(n,\frei) \ar[r]^-1\ar[drr]_-4 & k^*(\LKan_k\sph_L(n,\frei)) \ar[r]^-2 & k^*(\LKan_k\sph_L(n,\frei))^h \ar[d]^3 \\
          & & P(R^{L^n}) }
we compute $k^*(\LKan_k\sph_L(n,\frei))^h\cong(R^{L^n})^h$ and conclude, that map 3 is an $L$-stable equivalence. 
Map 2 is an objectwise equivalence.
But also the composition
    $$ \sph_L(n,\frei)\otimes L^n\to\sph_L(0,\frei)\to(\sph_L(0,\frei))^h\to P(\sph_L(0,\frei)) $$
is an $L$-stable equivalence. Since tensoring with $L$ is a Quillen equivalence in the $L$-stable model structure, the adjoint map
    $$ \sph_L(n,\frei)\to \mc{V}(L^n,P(\sph_L(0,\frei))\cong P(R^{L^n})  $$
is an $L$-stable equivalence. This is map 4 of the diagram. Here we are using the cyclic permutation assumption, because \cite[10.3]{Hovey:general-sym-spec} shows, that $\frei\otimes L$ is a Quillen equivalence only under this condition.
By 2-out-of-3 this proves the special case, map 1 is an $L$-stable equivalence. 

For general cofibrant $E$ we know, that $E$ is a retract of the class
    $$I_{\Sp(\mc{M},L)}^{L\text{-stable}}\text{-cell}=(\bigcup_{n\ge 0}\sph_L(n,\frei)\otimes I_{\mc{M}})\text{-cell}, $$
where $I_{\Sp(\mc{M},L)}^{L\text{-stable}}$ is the class of generating cofibrations for the $L$-stable model structure on $\Sp(\mc{M},L)$ by \cite[1.8]{Hovey:general-sym-spec}. Here, for $i\in I_{\mc{M}}$, we have:
    $$ \sph_L(n,\frei)\otimes i= \ol{F}_ni, $$
where $\ol{F}_n\co\mc{M}\to\Sp(\mc{M},L)$ is the \mc{V}-left adjoint to the $n$-th evaluation functor $\Ev_n\co\Sp(\mc{M},L)\to\mc{M}$. Since $L$-stable equivalences of $L$-spectra are closed under sequential colimits, we can prove the claim by induction along pushouts. We use, that $j^*$ and $\LKan_j$ commute with colimits. We obtain diagrams
\diagr{ \bigsqcup_m\sph_L(n_m,\frei)\otimes D_m \ar[d] & \bigsqcup_m\sph_L(n_m,\frei)\otimes C_m \ar[d]\ar[l]\ar[r] & E \ar[d] \\
        \bigsqcup_m R^{L^n}\otimes D_m & \bigsqcup_m R^{L^n}\otimes C_m \ar[r]\ar[l] & j^*(\LKan_j E) }
where the maps $C_m\to D_m$ are in $I_{\mc{M}}$. All the vertical maps are $L$-stable equivalences. Since \mc{M} is left proper, we can assume, that all $C_m$'s and $D_m$'s are cofibrant. 
Hence the two left hand horizontal maps are cofibrations. Again by left properness we can conclude, that the pushout is an $L$-stable equivalence. 
\end{proof}

\begin{theorem}
Under the assumptions from lemma \emph{\ref{unit for BF spectra}} the pair of functors
    $$k^*\co\mc{M}^{\vsph_L}\to\Sp(\mc{M},L):\!\LKan_k$$ 
is a \mc{V}-Quillen equivalence.
\end{theorem}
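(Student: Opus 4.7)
By Lemma \ref{LKan left Quillen} the adjunction $\LKan_k \dashv k^*$ is already a $\mc{V}$-Quillen pair between the $L$-stable model structures, so the only thing left to verify is the Quillen equivalence condition. My plan is to use the standard criterion: for every cofibrant $E$ in $\Sp(\mc{M},L)$ and every fibrant $\dgrm{X}$ in $\mc{M}^{\vsph_L}$, a map $f\co\LKan_k E\to\dgrm{X}$ is an $L$-stable equivalence if and only if its adjoint $f^{\#}\co E\to k^*\dgrm{X}$ is. The two ingredients I want to bring to bear are already at hand, namely Lemma \ref{k detects we}, which says $k^*$ both preserves and detects $L$-stable equivalences, and Lemma \ref{unit for BF spectra}, which says the unit $\eta_E\co E\to k^*\LKan_k E$ is an $L$-stable equivalence whenever $E$ is cofibrant.

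The argument then amounts to the triangle identity. The adjoint of $f$ factors as
    $$ f^{\#}\co E \stackrel{\eta_E}{\to} k^*\LKan_k E \stackrel{k^*(f)}{\to} k^*\dgrm{X}. $$
By Lemma \ref{k detects we}, $f$ is an $L$-stable equivalence if and only if $k^*(f)$ is. By Lemma \ref{unit for BF spectra} the map $\eta_E$ is an $L$-stable equivalence, since $E$ is cofibrant. So by the two-out-of-three property applied to the above factorization, $f^{\#}$ is an $L$-stable equivalence if and only if $k^*(f)$ is, hence if and only if $f$ is. This yields the Quillen equivalence.

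The only potential wrinkle is the bookkeeping around properness and the existence of the various factorizations used in Lemma \ref{unit for BF spectra}; these are all covered under the assumptions we inherit from that lemma (in particular the cyclic permutation hypothesis on $L\otimes L\otimes L$ and left properness of $\mc{M}$). No further model-theoretic surgery is required, since both detection of equivalences by $k^*$ and the unit being an equivalence on cofibrant objects have already been proved. In this sense the real content is entirely in Lemmas \ref{k detects we} and \ref{unit for BF spectra}; the theorem is a short formal consequence.
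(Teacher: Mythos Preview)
Your argument is correct and is exactly what the paper's proof (``This follows directly from \ref{k detects we} and \ref{unit for BF spectra}'') leaves implicit: you have written out the standard unit/detection argument that those two lemmas feed into. One small caveat on the bookkeeping: Lemma~\ref{LKan left Quillen} is stated for \emph{full} inclusions of full subcategories of $\mc{V}$, whereas $k\co\sph_L\to\vsph_L$ is not full, so strictly speaking that lemma does not apply as cited; the Quillen pair property for $(\LKan_k,k^*)$ still holds by the same objectwise reasoning (restriction preserves level fibrations and level acyclic fibrations, and the $L$-stable fibration conditions on both sides are compatible via Lemma~\ref{Hovey,BF}(iv) and Lemma~\ref{L-stable fibrations}), but you should justify it directly rather than invoke \ref{LKan left Quillen}.
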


\begin{proof}
This follows directly from \ref{k detects we} and \ref{unit for BF spectra}.
\end{proof}

\begin{remark}\label{spectra-comparison}
We have a commutative diagram of \mc{V}-Quillen pairs:
\diag{   \symsp(\mc{M},L)\cong\mc{M}^{\sigsph_L} \ar@<2pt>[rr]^-{\LKan_j}\ar@<2pt>[rd] & & \mc{M}^{\vsph_L} \ar@<2pt>[ll]\ar@<2pt>[dl] \\
          & \Sp(\mc{M},L)\cong\mc{M}^{\sph_L} \ar@<2pt>[ul]^-{\LKan_i} \ar@<2pt>[ur]^-{\LKan_k}  & }{comparison of spectra}
This shows, that under the assumptions from lemma \ref{unit for BF spectra} $L$-stable functors are \mc{V}-Quillen equivalent to symmetric $L$-spectra if and only if Bousfield-Friedlander-$L$-spectra are \mc{V}-Quillen equivalent to symmetric $L$-spectra. See section \ref{section:examples} for applications. Of course, this is cheating and we believe that the pair $(\LKan_j,j^*)$ should be a Quillen equivalence under much more general conditions. In particular, the cyclic permutation condition should not play a role.
\end{remark}

\section{Examples}
\label{section:examples}

Let us recall again the assumptions, under which the $L$-stable model structure on $\mc{M}^{\mc{U}}$ exists. 

\begin{itemize}
   \item
The category \mc{V} has to be a locally presentable cofibrantly generated symmetric monoidal model category equipped with a functorial small fibrant replacements.
   \item
The object $L$ in \mc{V} has to be \mc{V}-small and cofibrant. If the unit $S$ of \mc{V} is small, every small object in \mc{V} is \mc{V}-small. 
   \item
The category \mc{M} should be a right proper locally presentable cofibrantly generated \mc{V}-model category with a chosen functorial small fibrant replacements. 
   \item
For the $L$-stable model structure to exist on $\mc{M}^{\mc{U}}$ the category \mc{U} has to satisfy the axioms \ref{U1}. 
   \item
In order to have a closed symmetric monoidal structure  on $\mc{V}^{\mc{U}}$ we need \ref{U2}. 
   \item
Right properness of \mc{V} and properness of \mc{M} imply all the compatibility results. 
   \item
Finally for the monoid axiom in $\mc{V}^{\mc{U}}$ we need, that \mc{V} is strongly left proper, satisfies the monoid axiom and sources and targets of $I_{\mc{V}}$ are \mc{V}-finitely presentable. Additionally \mc{U} should satisfy \ref{U3}. If \mc{V} has only cofibrant objects, it is strongly left proper and satisfies the monoid axiom.
\end{itemize}

For our $L$-stabilization we considered the category $\vsph_L$ as source category \mc{U}. It satisfies axioms \ref{U1}, \ref{U2} and, if $L$ is \mc{V}-finitely presentable, (\ref{8}). Axiom (\ref{7}) is obviously highly sensitive to the choice of $L$.

\begin{example}
Let $\mc{V}=\mc{S}_*$ pointed simplicial sets and $L=S^1$. In this case we know by \cite[Theorem 4.2.5]{HSS:sym}, that the diagram (\ref{comparison of spectra}) consists of Quillen equivalences. 
So the homotopy category associated to the category $\mc{S}_*^{\sph_{S^1}^{\mc{S}_*}}$ with the $S^1$-stable model structure is the classical stable homotopy category with the correct monoidal product. This was first proved in \cite{Lydakis}. The monoid axiom holds for $S^1$-stable functors, and was first proved in \cite[lemma 6.30]{DRO:enriched}. For symmetric spectra this was proved in \cite[Theorem 5.4.1]{HSS:sym}.
\end{example}

\begin{example}
Let $S$ be a noetherian scheme of finite dimension.
Let $\mc{V}$ be the category of $\mathbbm{A}^1$-local motivic spaces over a scheme $S$. There are several ways to construct this model category, all of them Quillen equivalent to each other, and all serve well for our purpose. See \cite{morel-voev:mot-hom}, \cite{jar:motsymspec} or \cite{DRO:mot}.
Let $L$ be the smash product $\mathbbm{P}^1$ of the simplicial circle $S^1$ with the Tate circle $\mathbbm{A}^1-\{0\}$. This object is cofibrant and and \mc{V}-finitely presentable. It satisfies the cyclic permutation condition \cite[3.13]{jar:motsymspec}. By \cite[Theorem 4.31]{jar:motsymspec} we know, that (\ref{comparison of spectra}) is a diagram of \mc{V}-Quillen equivalences. Hence $\mathbbm{P}^1$-stable functors supply a model for the stable motivic homotopy category together with the correct smash product. This was first observed in \cite{DRO:enriched} and \cite{DRO:mot}.

More generally, Jardine has constructed $L$-stable spectra and symmetric spectra over motivic spaces for $L=S^1\wedge G$, where $G$ is a compact object, see \cite[2.2]{jar:motsymspec}. 
They are shown to be Quillen equivalent in \cite[Theorem 4.31]{jar:motsymspec}. 
Compact object in Jardine's sense are \mc{V}-finitely presentable, and all objects are cofibrant in his setting. So our theory applies and again it follows, that our $L$-stable functors are Quillen equivalent to them. 
\end{example}

\begin{example}
In \cite[section 9]{DRO:enriched} it is shown, that $G$-equivariant stable homotopy theory can be described by this setup. 
\end{example}


\end{document}